\newtheorem{theorem}{Theorem}[section]
\theoremstyle{plain}
\newtheorem{corollary}[theorem]{Corollary}
\newtheorem{defi}[theorem]{Definition}
\newtheorem{example}[theorem]{Example}
\newtheorem{lemma}[theorem]{Lemma}
\newtheorem{notation}[theorem]{Notation}
\newtheorem{remark}[theorem]{Remark}
\numberwithin{equation}{section}
\newcommand \modk {{\mathcal M}_{\kappa}}
\def\what{\widehat}
\def\Jk{{\mathcal J}}
\def\Xxi{{\mathfrak X}_\zeta}
\def\Xx{{\mathfrak X}}
\def\FrA{{\mathfrak A}}
\def\FrR{{\mathfrak R}}
\def\Aa{{\mathbb A}}
\def\Ga{{\mathbb G}}
\def\One{{1\!\!1}}
\def\Lip{{\rm Lip}}
\def\Hk{{\mathcal H}}
\def\Zk{{\mathcal Z}}
\def\half{\frac{1}{2}}
\newcommand{\lam}{\lambda}
\newcommand{\gam}{\gamma}
\newcommand{\om}{\omega}
\def\Om{\Omega}
\newcommand{\Sig}{\Sigma}
\newcommand{\Gam}{\Gamma}
\newcommand{\sig}{\sigma}
\newcommand{\R}{{\mathbb R}}
\newcommand{\Z}{{\mathbb Z}}
\newcommand{\C}{{\mathbb C}}
\def\N{{\mathbb N}}
\newcommand{\Prob}{{\mathbb P}\,}
\def\P{\Prob}
\def\bq{{\bf q}}
\def\ba{{\bf a}}
\def\wt{\widetilde}
\def\A{{\mathcal A}}
\def\Rk{{\mathcal R}}
\def\Lk{{\mathcal L}}
\def\Nk{{\mathcal N}}
\def\Pk{{\mathcal P}}
\def\Sf{{\sf S}}
\def\T{{\mathbb T}}
\def\Vk{{\mathcal V}}
\def\be{\begin{equation}}
\def\ee{\end{equation}}
\newcommand{\eps}{{\varepsilon}}
\def\und{\underline}
\def\card{{\rm card}}
\def\Cc{{\mathscr M}}
\def\ve1{\vec{1}}
\def\M{{\mathbf M}}
\def\Ak{{\mathcal A}}
\def\fu{\varphi}
\newcommand{\omb}{\boldsymbol \omega}
\begin{document}

\pdfbookmark{Dedication}{dedication}
\thispagestyle{empty}
\begin{flushright}
  \emph{Dedicated to Larry Zalcman, \\ with admiration and gratitude.}
\end{flushright}

\bigskip

\title{A spectral cocycle for  substitution systems and translation flows}

\author{Alexander I. Bufetov}
\address{Alexander I. Bufetov\\ 
Aix-Marseille Universit{\'e}, CNRS, Centrale Marseille, I2M, UMR 7373\\
39 rue F. Joliot Curie Marseille France }
\address{Steklov  Mathematical Institute of RAS, Moscow}
\address{Institute for Information Transmission Problems, Moscow}
\email{alexander.bufetov@univ-amu.fr, bufetov@mi.ras.ru}
\author{Boris Solomyak }
\address{Boris Solomyak\\ Department of Mathematics,
Bar-Ilan University, Ramat-Gan, Israel}
\email{bsolom3@gmail.com}

\begin{abstract} 
For substitution systems and translation flows, a new cocycle, which we call {\em spectral cocycle}, is introduced, whose Lyapunov exponents govern the local  dimension of the spectral measure for higher-level cylindrical functions. The construction relies on the symbolic representation of translation flows and the formalism of matrix Riesz products. 
 \end{abstract}

\date{\today}

\keywords{Substitution dynamical system; spectral measure; H\"older continuity.}

\maketitle

\thispagestyle{empty}

\section{Introduction}

This paper is devoted to the spectral theory of substitution systems and translation flows and continues the work started in \cite{BuSo1,BuSo2}. We focus on the local properties of spectral measures, such as local dimension, H\"older property, and closely related questions of singularity and absolute continuity. Our main construction is that of a new cocycle, which we call the {\em spectral cocycle}. This spectral cocycle is related to  our earlier work, in particular, to the matrix Riesz products, used in  \cite{BuSo1}  to obtain H\"older continuity of spectral measures for typical suspension flows over non-Pisot substitution systems. 

Our cocycle is defined over a skew product whose base is a shift transformation on a symbolic space arising from the realization of the Teichm\"uller flow and the fibre is a torus of dimension equal
to the number of intervals in the associated interval exchange. Our main result, see \eqref{d-lyap}, \eqref{d-lyap-gen}  below,  is a formula relating the pointwise dimension of the spectral measure and the pointwise Lyapunov exponent of the cocycle.
As a corollary, we obtain an inequality for the Lyapunov exponent at almost every point and a sufficient condition for singularity of the spectrum. Analogous results are obtained for suspension flows
over $S$-adic systems, including classical substitution systems as a special case.

In the Appendix, which is independent of the rest of the paper, we explain how a modification of the argument from \cite{BuSo2} proves H\"older continuity of spectral measures for typical translation flows in the stratum $\Hk(1,1)$. 


\section{Background}

\subsection{Translation flows}
Let $M$ be a compact connected orientable surface.  To
  a holomorphic one-form $\omb$ on $M$ one can assign  the corresponding {\it vertical} flow $h_t^+$ on $M$, i.e., the flow at unit speed along the
 leaves of the foliation $\Re(\omb)=0$.
The vertical flow preserves the measure 
$
{\mathfrak m}=i(\omb\wedge {\overline \omb})/2, 
$
the area form induced by $\omb$. 
Let $\kappa=(\kappa_1, \dots, \kappa_{\sigma})$ be
a nonnegative integer vector such that  $\kappa_1+\dots+\kappa_{\sigma}=2\rho-2$.
Consider the moduli space $\modk$ of pairs $(M, \omb)$, where $M$ is
a Riemann surface of genus $\rho$ and $\omb$ is a holomorphic differential
of area $1$ with singularities of orders $\kappa_1, \dots, \kappa_{\sigma}$.
The moduli space  $\modk$ is  called the {\it stratum} in the moduli space of abelian differentials.

The Teichm{\"u}ller flow ${\bf g}_s$  sends the
modulus of a pair $(M, \omb)$ to the modulus of the pair
$(M, \omb^{\prime})$, where $\omb^{\prime}=e^s\Re(\omb)+ie^{-s}\Im(\omb)$;
the new complex structure on $M$ is uniquely determined by the requirement that the form $\omb^{\prime}$
 be holomorphic.  Veech \cite{veech3} proved that the space $\modk$ need not be connected;
let $\Hk$ be a connected component of $\modk$, and let $\nu$ be a probability measure on $\Hk$, invariant and ergodic under the Teichm{\"u}ller flow  ${\bf g}_s$. 
For $\nu$-almost every Abelian differential $(M, \omb)$,  Masur \cite{masur} and Veech \cite{veech}  independently proved that the flow $h_t^+$ is uniquely ergodic.  

There are many equivalent definitions of translation surfaces, see, e.g., \cite{MasurTabach,Zorich}. The form $\omb$ provides a flat metric on $M\setminus\Sig$, where $\Sig$ is the set of singularities, the zeros of $\omb$. Moreover, at the zero of order $\kappa_j$ one gets a cone singularity, with the total angle $2\pi(\kappa_j+1)$.

\subsection{Interval exchange transformations and suspensions over them} \label{subsec-IET}

There is  a deep connection between translation flows and {\em interval exchange transformations}, discovered by Veech \cite{Veech0,
veech}. We recall it briefly; for more details see, e.g., the surveys by Viana \cite{viana2}, Yoccoz \cite{yoccoz}, and Zorich \cite{Zorich}. Let $\Ak=\{1,\ldots,m\}$ be a finite alphabet, with $m\ge 2$, and $\pi$  an irreducible permutation of $\Ak$, i.e., $\pi\{1,\ldots,k\}\ne \{1,\ldots,k\}$ for $k<m$. Given a positive vector $\lam\in \R^m_+$, the interval exchange transformation (IET) $f(\lam,\pi)$ is defined as follows: 
consider the interval $I=[0,\sum_{i=1}^m \lam_i)$, break it into subintervals
$$
I_i = I_i(\lam,\pi) = \Bigl[\sum_{j<i}\lam_j, \sum_{j\le i} \lam_j\Bigr),\ \ 1\le j \le m,
$$
and rearrange the intervals $I_i$ by translation according to $\pi$:
$$
x\mapsto x + \sum_{\pi(j) < \pi(i)} \lam_j - \sum_{j < i} \lam_j,\ \ \ x\in I_i.
$$
For $m=2$ the IET is just a circle rotation (modulo identification of the endpoints of $I$), and it can be viewed as the first return map of a linear flow on a torus $\T^2$. Similarly, for $m\ge 3$ by a singular suspension (with a piecewise-constant roof function, constant on each subinterval $I_i$), the IET can be represented as a first return map of a translation flow on a suitable translation surface to a carefully chosen Poincar\'e section, a line segment $I$, see \cite{Veech0,veech}. Conversely, 
given a translation surface, one can find a horizontal segment $I$ in such a way that the first return map of the vertical flow to $I$ is an IET. Precise connection between the two systems is given by the {\em zippered rectangles construction} of Veech \cite{veech}.

\subsection{Rauzy-Veech-Zorich induction and the corresponding cocycles} A fundamental tool in the study of IET's and translation flows is the Rauzy-Veech algorithm, also called Rauzy-Veech induction, introduced   in \cite{Veech0,Rauzy}. 
Let $\pi$ be an irreducible permutation, and suppose that $(\lam,\pi)$ is such that $\lam_m \ne \lam_{\pi^{-1}(m)}$. Then the first return map (what is sometimes called ``inducing'' whence the term ``induction'') of $f(\lam,\pi)$ to the interval
$$
\Bigl(0, \sum_{i=1}^m \lam_i - \min\{\lam_{\pi^{-1}(m)},\lam_m\} \Bigr)
$$
is an irreducible IET on $m$ intervals as well, see, e.g., \cite{viana2,yoccoz,MMY}. If $\lam_m < \lam_{\pi^{-1}(m)}$, we say that this is an operation of type ``a''; otherwise, an operation of type ``b''. The {\em Rauzy graph} is a directed labeled graph, whose vertices are permutations of $\Ak=\{1,\ldots,m\}$ and the edges lead to permutations obtained by applying one of the operations. The edges are labeled ``a'' or ``b'' depending on the type of the operation. The {\em Rauzy class} of a permutation $\pi$ is the set of all permutations that can be reached from $\pi$ following a path in the Rauzy graph. This defines an equivalence relation on the full permutation group. For almost every IET (with respect to the Lebesgue measure on $\R^m_+$, that is, for almost all length vectors), the algorithm is well-defined for all times into the future, that is, we never get into a ``draw'' $\lam_m = \lam_{\pi^{-1}(m)}$, 
and obtain an infinite path in the Rauzy graph, corresponding to the IET. Veech \cite{veech} proved that, conversely, every infinite path in the Rauzy graph arises from an IET in a such a way.

In the ergodic theory of IET's it is useful to consider an {\em acceleration} of the algorithm. Zorich induction \cite{Zorich1} is obtained by applying the Rauzy-Veech induction  until the first switch from a type ``a'' to a type ``b'' operation, or vice versa.
Sometimes other versions of the algorithm and accelerations are used, e.g., the one considered by Marmi, Moussa, and Yoccoz \cite{MMY}.

Let $(\lam',\pi')$ be obtained from $(\lam,\pi)$ by a step of the Rauzy-Veech induction, and let $f_I=f(\lam,\pi)$. Write $I_j = I_j(\lam,\pi)$ and let $J_j = I_j(\lam',\pi')$ be the intervals of the exchange $f_J=f(\lam',\pi')$. Denote by 
$r_i$ the return time for the interval $J_i$ into $J$ under $f_I$, that is,
$r_i = \min\{k>0: f_I^k (J_i) \subset J\}$. From the definition of the induction procedure it follows that $r_i = 1$ for all $i$ except one, for which it is equal to 2. 
Represent $I$ as a Rokhlin tower over the subinterval $J$ and its induced map $f_J$, and write
\be \label{Rokhlin1}
I = \bigsqcup_{i=1,\ldots,m,\ k = 0, \ldots, r_i-1} f_I^k (J_i).
\ee
By construction, each of the ``floors'' of our tower, that is, each of the subintervals $f_I^k (J_i)$ is a subset of a unique subinterval of the initial exchange, and we define an integer $n(i,k)$ by the formula
\be \label{Rokhlin2}
f_I^j (J_i) \subset I_{n(i,k)}.
\ee
Let $B^R(\lam,\pi)$ be the linear operator on $\R^m$ given by the $m\times m$ matrix $[n(i,j)]$. This matrix is easily shown to be unimodular. Given a Rauzy class $\FrR$, the function $B^R: \R^m_+\times \FrR \to GL(m,\R)$ yields the {\em Rauzy-Veech}, or {\em renormalization} cocycle. If, instead, we apply the Zorich induction algorithm, the same procedure yields the {\em Zorich cocycle}. 

One can consider the Rauzy-Veech and Zorich  induction algorithm also on the set of zippered rectangles; these can be represented as bi-infinite paths in the Rauzy graph. A remarkable fact is that, after an appropriate renormalization, the Rauzy-Veech map $(\lam,\pi)\mapsto (\lam',\pi')$ and the Zorich map $(\lam,\pi) \mapsto (\lam'',\pi'')$ can be seen as the first return maps of the Teichm\"uller flow on the space of zippered rectangles, with respect to carefully chosen Poincar\'e sections, see, e.g., \cite[Section 2.10]{viana2} and \cite[Section 11.3]{yoccoz}.

\medskip

\indent
{\bf Remark.} The zippered rectangles construction provides natural bases for the absolute and relative homology groups $H_1(M\setminus \Sig,\R)$ and $H_1(M,\Sig,\R)$; in particular, $\R^m$ may be identified with  $H_1(M,\Sig,\R)$. The Rauzy-Veech cocycle can then be represented as acting on the cohomology groups 
$H^1(M\setminus \Sig,\R)$ and $H^1(M,\Sig,\R)$, as shown by Veech \cite{veech} (see also \cite[Section 2.9]{viana2}).


\subsection{Markov compacta and $S$-adic systems} \label{sec-Sadic}
The Rauzy-Veech and Zorich induction and cocycles already provide a powerful symbolic framework for the study of IET's and translation flows, which was used by many authors.
However, to get a symbolic representation, or measurable conjugacy for these systems, an additional step is needed, which was done by Bufetov \cite{Buf-umn}, using the theory of {\em Markov compacta}. For the background on Markov compacta and Bratteli-Vershik transformations, see the original papers \cite{Vershik1,Vershik2,Vershik-Livshits}. A Markov compactum is the space of infinite paths in a Bratteli diagram. When the Bratteli diagram is equipped with a {\em Vershik ordering}, one gets a ``transverse,'' or ``adic'' map, which is now usually called the 
Bratteli-Vershik transformation. We will call a Bratteli diagram with a Vershik ordering a {\em Bratteli-Vershik diagram}.

More recently an  essentially equivalent framework of $S$-adic transformations was developed, see
\cite{BD,BST_2019,BSTY} and references therein. We use it in \cite{BuSo2} and in this paper as well; therefore, we do not present the background on Markov compacta here.

Let $\Ak=\{1,\ldots,m\}$ be a finite alphabet; denote by $\Ak^+$ the set of finite (non-empty) words in $\Ak$.  A {\em substitution} is a map $\zeta:\, \Ak \to \Ak^+$, which is extended to an action on $\Ak^+$ and $\Ak^\N$ by concatenation. The {\em substitution matrix} is defined by 
\be \label{sub-mat}
\Sf_\zeta (i,j) = \mbox{number of symbols}\ i\ \mbox{in the word}\ \zeta(j).
\ee

\begin{notation} \label{not-1}
Denote by $\FrA$ a set of substitutions $\zeta$ on $\Ak$ with the property that all letters appear in the set of words $\{\zeta(a):\,a\in \Ak\}$ and there exists $a\in \Ak$ such that $|\zeta(a)|>1$.
\end{notation}

Let $\ba =(\zeta_n)_{n\ge 1}$ be a sequence of substitutions on $\Ak$. Substitutions, extended to $\Ak^+$, can be composed in the usual way as transformations $\Ak^+\to \Ak^+$. Denote
$$
\zeta^{[n]} := \zeta_1\circ \cdots\circ\zeta_n,\ \ n\ge 1.
$$
Given a sequence of substitutions $\ba$, denote by $X_\ba\subset \Ak^\Z$ the subspace of all two-sided sequences whose every subword appears as a subword of 
$\zeta^{[n]}(b)$ for some $b\in \Ak$ and $n\ge 1$. Let $T$ be the left shift on $\Ak^\Z$; then $(X_{\ba},T)$ is the (topological) $S$-adic dynamical system.
We refer to \cite{BD,BST_2019,BSTY} for the background on $S$-adic shifts. A sequence of substitutions is called {\em primitive} if for any $n\in \N$ there exists $k\in\N$ such that
$\Sf_n\cdots\Sf_{n+k}$ is a matrix with strictly positive entries. This implies minimality of the $S$-adic shift, see \cite[Theorem 5.2]{BD}. (Note that in \cite{BD} this property is called {\em weak primitivity}, however, in \cite{BST_2019,BSTY} the term ``primitive'' is used, as we do.) In many cases a stronger property holds, and it will be one of our basic assumptions:

\medskip

{\bf (A1)} {\em
 There exists a finite word $\zeta_W=\zeta_{w_1}\ldots \zeta_{w_k}$ in the alphabet $\FrA$ which appears in the sequence $\ba$ infinitely often, for which the substitution matrix $\Sf_{\zeta_W}$ is strictly positive.}
 
 \medskip
 
Property (A1), of course, implies primitivity. It also implies unique ergodicity of the $S$-adic shift, see \cite[Theorem 5.7]{BD}; in fact, this goes back to Furstenberg \cite[(16.13)]{Furst}.

We will also assume that the $S$-adic system is {\em aperiodic}, i.e., it has no periodic points. (A minimal system that has a periodic point, is obviously a system on a finite space, and we want to exclude a trivial situation.) Checking aperiodicity may require some work, even for a single substitution.

Further, we need the notion of  {\em recognizability} for the sequence of substitutions, introduced 
in \cite{BSTY}, which generalizes {\em bilateral recognizability} of B. Moss\'e \cite{Mosse} for a single substitution, see also Sections 5.5 and 5.6 in \cite{Queff}. By definition of the space $X_\ba$, for every $n\ge 1$, every $x\in X_{\ba}$ has a representation of the form
\be \label{recog}
x = T^k\bigl(\zeta^{[n]}(x')\bigr),\ \ \mbox{where}\ \ x'\in X_{\sig^n \ba},\ \ 0\le k < |\zeta^{[n]}(x_0)|.
\ee
Here $\sig$ denotes the left shift on $\FrA^\N$, and we recall that a substitution $\zeta$ acts on $\Ak^\Z$ by
$$
\zeta(\ldots a_{-1}.a_0 a_1\ldots) = \ldots \zeta(a_{-1}).\zeta(a_0)\zeta(a_1)\ldots
$$

\begin{defi} \label{def-recog}
A sequence of substitutions $\ba = (\zeta_j)_{j\ge 1}$ is said to be {\em recognizable} if the representation (\ref{recog}) is unique for all $n\ge 1$.
\end{defi}

The following is a special case of \cite[Theorem 4.6]{BSTY} that we need.

\begin{theorem}[{\cite{BSTY}}] \label{th-recog0}
Let $\ba= (\zeta_j)_{j\ge 1} \in \FrA^\N$ be such that every substitution matrix $\Sf_{\zeta_j}$ has maximal rank $m$ and $X_\ba$ is aperiodic. Then $\ba$ is recognizable.
\end{theorem}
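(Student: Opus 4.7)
My plan is to reduce the theorem to its level-one instance and then argue by contradiction. I would first establish that under the hypotheses, every $x\in X_{\ba}$ admits a unique decomposition $x=T^k(\zeta_1(x'))$ with $x'\in X_{\sig\ba}$ and $0\le k<|\zeta_1(x'_0)|$. Given this base case, the general statement for every $n\ge 1$ follows by induction: the tail $\sig\ba$ satisfies the same hypotheses (maximal rank is inherited letter by letter, and aperiodicity is preserved because $\zeta_1(X_{\sig\ba})\subset X_\ba$ respects periodicity --- a periodic point $y\in X_{\sig\ba}$ with $T^py=y$ would yield $\zeta_1(y)\in X_\ba$ with $T^{|\zeta_1(y_{[0,p)})|}\zeta_1(y)=\zeta_1(y)$), so iterating the one-step desubstitution produces the representation \eqref{recog}, whose uniqueness is obtained by composing the uniqueness at each layer.

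For the base case I would assume for contradiction that $x$ has two distinct desubstitutions under $\zeta_1$, and encode each as a bi-infinite sequence of cut points in $\Z$---the positions separating consecutive $\zeta_1$-blocks. Consecutive gaps in either sequence lie in the finite set $\{|\zeta_1(a)|:a\in\Ak\}$. The maximal rank of $\Sf_{\zeta_1}$ ensures that the words $\zeta_1(a)$, $a\in\Ak$, have pairwise distinct Parikh vectors and are in particular pairwise distinct, so two desubstitutions with the same cut-point set must coincide. Thus the two cut-point sets genuinely differ, and after a shift I may assume that a cut point $c$ of one lies strictly between two consecutive cut points $c_-<c<c_+$ of the other, with a bounded discrepancy $c-c_-$.

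Assumption (A1) gives minimality of $(X_\ba,T)$ and hence uniform recurrence in $x$ of every finite pattern. The heart of the proof is then a pigeonhole step: among the uniformly recurrent positions of the mismatch configuration just described, I would find two, $p<q$, whose local data---the two letters being desubstituted in each representation together with the discrepancy $c-c_-$---agree. The maximal-rank hypothesis ensures that any finite concatenation $\zeta_1(a_1)\zeta_1(a_2)\cdots\zeta_1(a_r)$ admits a unique factorization into letter images, which allows one to propagate the coincidence of local configurations outward in both directions, forcing $T^{q-p}x=x$ and contradicting aperiodicity.

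The genuinely hard step is precisely this propagation, which is the $S$-adic analogue of Moss\'e's bilateral recognizability theorem. In the single-substitution setting, Moss\'e's proof proceeds through a delicate synchronization argument involving bispecial factors; in the present variable-level situation one must additionally control how the maximal-rank condition at each level prevents a local mismatch from being dissolved upon further desubstitution, since the ambient substitution changes as one passes to deeper layers. I would model the rigorous implementation on \cite[Theorem 4.6]{BSTY}, which packages exactly this propagation into a robust statement across the $S$-adic hierarchy.
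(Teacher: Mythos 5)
First, a point of comparison: the paper does not prove this statement at all --- it is quoted as a special case of \cite[Theorem 4.6]{BSTY}, so there is no internal argument to measure your proposal against. What you have written is a plausible roadmap (it is indeed the Moss\'e-type strategy that \cite{BSTY} implements), but it is not a proof, for two concrete reasons. The decisive one is that the propagation step --- showing that once the mismatch configuration recurs with the same local data at positions $p<q$, the coincidence spreads outward and forces $T^{q-p}x=x$ --- is exactly the content of the theorem, and you explicitly defer it to \cite[Theorem 4.6]{BSTY}, i.e.\ to the result being proved. Note also that your preparatory claim that maximal rank yields unique factorization of finite concatenations $\zeta_1(a_1)\cdots\zeta_1(a_r)$ into letter images is true but not for the reason you give: pairwise distinct Parikh vectors do not by themselves make $\{\zeta_1(a):a\in\Ak\}$ a uniquely decipherable code (one needs, e.g., the defect theorem: a finite non-code lies in a free submonoid of rank $<m$, which would force $\mathrm{rank}\,\Sf_{\zeta_1}<m$); and even granted the code property, it only controls two block factorizations that share a cut point, whereas the hard case of recognizability is precisely two factorizations whose cut sets are interleaved and never coincide.

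Second, and independently: your pigeonhole step invokes (A1) to obtain minimality and uniform recurrence of the mismatch pattern. But (A1) is not among the hypotheses of Theorem~\ref{th-recog0}; the theorem assumes only that each $\Sf_{\zeta_j}$ has rank $m$ and that $X_\ba$ is aperiodic, and membership in $\FrA$ (Notation~\ref{not-1}) does not give primitivity. As written, your argument would prove at best a weaker statement with minimality added to the hypotheses. (Your reduction to the one-step case is sound: the tail $\sig\ba$ inherits the hypotheses, and the observation that a periodic $y\in X_{\sig\ba}$ pushes forward to a periodic $\zeta_1(y)\in X_\ba$ is correct.) A self-contained proof must either carry out the synchronization argument without uniform recurrence, as \cite{BSTY} do, or honestly restrict the statement.
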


There is a canonical correspondence between (one-sided) Bratteli-Vershik diagrams with $m$ vertices on each level  and sequences of substitutions $\ba=(\zeta_j)_{j\ge 1}$ on the alphabet 
$\Ak = \{0,\ldots,m-1\}$, discovered by Livshits \cite{Liv1}. 

\begin{theorem}[{\cite[Theorem 6.5]{BSTY}}] \label{thm-recog}
Let $\sig\in \FrA^\N$ be a recognizable sequence of substitutions. Then the $S$-adic shift $(X_\ba,T)$ is almost topologically conjugate, hence measurably conjugate in case the system is uniquely ergodic, to the corresponding Bratteli-Vershik system.
\end{theorem}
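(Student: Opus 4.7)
The plan is to construct an explicit intertwining map $\Phi: X_\ba \to X_{BV}$, where $X_{BV}$ denotes the path space of the Bratteli-Vershik diagram assigned to $\ba$ by the Livshits correspondence, and to show that $\Phi$ is a continuous, almost-everywhere bijective conjugacy between $T$ and the Vershik map $\tau$.

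First I would use recognizability to associate to each $x \in X_\ba$ a path in the Bratteli diagram. For each $n\ge 1$, Definition \ref{def-recog} produces a unique pair $(x^{(n)}, k_n)$ with $x^{(n)} \in X_{\sig^n \ba}$ and $0 \le k_n < |\zeta^{[n]}(x^{(n)}_0)|$ satisfying \eqref{recog}. Applying recognizability one level at a time, we also obtain $x^{(n-1)} = T^{\ell_n}(\zeta_n(x^{(n)}))$ with $0 \le \ell_n < |\zeta_n(x^{(n)}_0)|$; the integer $\ell_n$ picks out a specific occurrence of the letter $x^{(n-1)}_0$ inside the word $\zeta_n(x^{(n)}_0)$, which is precisely an edge $e_n$ of the Bratteli diagram from the vertex $x^{(n)}_0$ at level $n$ to the vertex $x^{(n-1)}_0$ at level $n-1$, carrying the Vershik label $\ell_n$. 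Setting $\Phi(x) = (e_n)_{n \ge 1}$ gives the candidate conjugacy.

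Next I would verify the formal properties of $\Phi$. Continuity is straightforward: the edges $e_1,\ldots,e_N$ are determined by a sufficiently long finite window of $x$ around the origin, because the block boundaries in \eqref{recog} are locally identifiable from any finite cylinder once $N$ is fixed. The intertwining relation $\Phi\circ T = \tau \circ \Phi$ follows directly from the construction: shifting $x$ by one step either advances $\ell_1$ by one within its level-$1$ block, or, if $\ell_1$ was maximal, it resets and the ``carry'' propagates to higher levels, which is exactly the definition of the Vershik transformation $\tau$.

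The main technical obstacle is the handling of the exceptional set, since $\tau$ is undefined on the countable collection of maximal paths and $\Phi$ may be non-injective at the $T$-orbits of points whose Bratteli representation would coincide with a maximal or minimal path. I would identify this exceptional set precisely, remove its forward and backward orbits from both sides to obtain the almost topological conjugacy, and then verify that on the complement $\Phi$ is a homeomorphism intertwining $T$ and $\tau$. Finally, in the uniquely ergodic case, the aperiodicity assumption built into the framework implies that the unique invariant measure is non-atomic and therefore assigns zero mass to the excluded countable union of orbits, upgrading the almost topological conjugacy to the stated measurable isomorphism.
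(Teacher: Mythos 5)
This statement is imported verbatim from \cite[Theorem 6.5]{BSTY}; the paper offers no proof of its own, only the citation, so there is nothing internal to compare against. Your sketch is exactly the standard construction that the cited source carries out: recognizability makes the sets $T^i(\zeta^{[n]}[a])$ a nested sequence of clopen Kakutani--Rokhlin partitions, the induced tower of desubstitutions reads off a path in the Livshits Bratteli--Vershik diagram, and the shift becomes the Vershik ``odometer with carries,'' with the finitely many maximal/minimal paths and their orbits removed to get the almost conjugacy (negligible for any invariant measure by aperiodicity). The one step you pass over too quickly is the extraction of the single edge $e_n$: Definition~\ref{def-recog} gives uniqueness of the representation $x = T^{k_n}\zeta^{[n]}(x^{(n)})$ at each level $n$ separately, and you must still argue that the level-$n$ decomposition refines the level-$(n-1)$ one, i.e.\ that $x^{(n-1)} = T^{\ell_n}\zeta_n(x^{(n)})$ with a well-defined offset $\ell_n$; this follows from $\zeta^{[n]} = \zeta^{[n-1]}\circ\zeta_n$ together with uniqueness at level $n-1$ applied to the candidate decomposition induced from level $n$, but it is a needed (if short) argument rather than an automatic consequence. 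With that supplied, and with the clopenness of the partition elements (continuous injective images of compact sets forming a finite partition) used to justify continuity of $\Phi$ and surjectivity off the exceptional orbits, your outline is a correct proof of the quoted theorem.
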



\subsection{Symbolic representation of IET's and translation flows} \label{subsec-symbol}

Let $\Hk$ be a connected component of a stratum and $\FrR$ the Rauzy class of a permutation corresponding to $\Hk$.
Veech \cite{veech}
constructed a measurable  map from the space $\Vk(\FrR)$ of
zippered rectangles corresponding to the Rauzy class $\FrR$, to $\Hk$, which intertwines the Teichm\"uller flow on $\Hk$ and a renormalization flow $P_t$ that Veech defined on $\Vk(\FrR)$.
Section 4.3 of \cite{Buf-umn}  gives a symbolic coding of the flow $P_t$ on $\Vk(\FrR)$ on a space of 2-sided Markov compacta with a Vershik ordering. Using the canonical correspondence with sequences of substitutions, we obtain a map
\be \label{map-veech}
\Zk_{\FrR}: (\Vk(\FrR),\wt\nu) \to (\Om,\P)
\ee
to a probability space of 2-sided sequences of substitutions $(\zeta_j)_{j\in \Z}\in \FrA^\Z$,
defined almost everywhere. Here $\wt\nu$ is the pull-back of  $\nu$, an invariant and ergodic map under the Teichm\"uller flow on
$\Hk$. The first return map of the flow $P_t$ for an appropriate Poincar\'e section is mapped by $\Zk_\FrR$ to the shift map $\sigma$ on $(\Om,\P)$. This correspondence maps the Rauzy-Veech cocycle over the Teichm\"uller flow into the renormalization cocycle associated with the sequence of substitutions. More precisely, the substitution $\zeta_1$ in the symbolic representation of \cite{Buf-umn} can be ``read off'' the Rokhlin tower (\ref{Rokhlin1}), (\ref{Rokhlin2}) of one step of the Rauzy-Veech induction:
\be \label{induc}
\zeta_1:\ i \mapsto n(i,0)\ldots n(i, r_1-1),\ \ i=1,\ldots,m.
\ee
Thus we obtain
$$B^R(\lam,\pi) = [n(i,j)]_{i,j=1}^m = \Sf_{\zeta_1}^t.$$ We will be using the following notation for this cocycle:
 \be \label{Rauzy-Veech1}
 \Aa(\ba):= \Sf_{\zeta_1}^t;\ \ \Aa(\ba,n) := \Aa(\sig^{n-1}\ba)\cdot \ldots \cdot \Aa(\ba),
 \ee
 where $\ba = (\zeta_j)_{j=1}^\infty$ is the positive side of a sequence of substitutions from $\Om$.

A zippered rectangle $\Rk\in \Vk(\FrR)$ determines a suspension flow over an IET, isomorphic to the translation flow on a flat surface. The symbolic coding 
 $\Zk_\FrR$ induces a map defined for a.e.\ $\Rk\in \Vk(\FrR)$, from the corresponding flat surface $M(\Rk)$ to a suspension over the $S$-adic space $X_\ba$.
Moreover this map takes the IET into the $S$-adic system $(X_\ba,T)$, whereas the piecewise-constant roof function of the suspension is determined by the left side of the sequence $(\zeta_j)_{j=-\infty}^0$. The justification for transition from the Bratteli-Vershik coding of \cite{Buf-umn} to the $S$-adic framework is provided by Theorems~\ref{thm-recog} and \ref{th-recog0}, in view of the fact that the matrices of the Rauzy-Veech cocycle are unimodular, see \cite{veech,veechamj}, hence have maximal rank.
 We denote by $\Om_+$ the projection of $\Om$ to the ``positive side'' and by $\P_+$ the projection of the measure $\P$ to $\Om_+$.
The property (A1) holds
for $\P_+$-almost every $\ba\in \Om_+$, see Veech \cite{veech}.

\subsection{Cylindrical functions}
Suppose that the $S$-adic system $(X_\ba,T)$ is  uniquely ergodic, with the unique invariant probability measure by $\mu$. 
Consider the partition of $X_\ba$  into cylinder sets according to the value of $x_0$: $X_{\ba} = \bigsqcup_{a\in \Ak} [a] $.
Denote
by $(\Xx_\ba^{\vec{s}}, h_t, \wt{\mu})$ the suspension flow over $(X_{\ba},\mu, T)$,  corresponding to a piecewise-constant roof function determined by $\vec{s}\in \R^m_+$. 
We have a union, disjoint in measure:
$$
\Xx_\ba^{\vec{s}} = \bigcup_{a\in \Ak} [a]\times [0,s_a],
$$
and define a {\em Lip-cylindrical function} by the formula: 
\be \label{fcyl2}
f(x,t)=\sum_{a\in \Ak} \One_{[a]}(x) \cdot \psi_a(t),\ \ \mbox{with}\ \ \psi_a\in \Lip[0,s_a],
\ee
where $\Lip$ is the space of Lipschitz functions.


\section{Statement of main results}

\subsection{Definition of the spectral cocycle}
We proceed to the main construction of the paper. Let $\zeta$ be a substitution on $\Ak$ with a substitution matrix having non-zero determinant.
Consider  the toral endomorphism $\xi\mapsto \Sf_\zeta^t \,\xi\ (\mbox{mod}\ \Z^m),\ \xi \in \T^m = \R^m/\Z^m$,  induced by the transpose substitution matrix. Suppose that
$$\zeta(b) = u_1^{b}\ldots u_{|\zeta(b)|}^{b},\ \ b\in \Ak.$$
\begin{defi} \label{def-matrix}
Define a matrix-valued function $\Cc_\zeta:  \R^m\to M_m(\C)$  (the space of complex $m\times m$ matrices) by the formula 
\be \label{coc0}
\Cc_\zeta(\xi) = [\Cc_\zeta(\xi_1\ldots,\xi_m)]_{(b,c)} := \Bigl( \sum_{j\le |\zeta(b)|,\ u_j^{b} = c} \exp\bigl(-2\pi i \sum_{k=1}^{j-1} \xi_{u_k^{b}}\bigr)\Bigr)_{(b,c)\in \A^2},\ \ \ \xi\in \R^m.
\ee
Note that  $\Cc_\zeta$ is $\Z^m$-periodic, so we obtain a continuous matrix-function on the torus, which we denote, by a slight abuse of notation, by the same letter:
$\Cc_\zeta: \T^m\to M_m(\C)$.
\end{defi}

\begin{remark} {\em
The matrix $\Cc_\zeta$ already appeared (with a different notation) in \cite[(4.15)]{BuSo1} in the framework of generalized matrix Riesz products. It is also closely related to the Fourier matrix $B(k)$ from the recent papers of Baake et al. \cite{BFGR,BGM,BG2M}. More precisely, $B(k)$ is the restriction of $\Cc_\zeta$ to the line $\{\xi = k\vec s, \ k\in \R\}$, where $\vec s$ is the Perron-Frobenius eigenvector of $\Sf_\zeta^t$. This leads to a cocycle on the line $\R$, used to study the diffraction spectrum of a single substitution, see Section~\ref{subsec-Baake} below for a more detailed discussion.}
\end{remark}

\begin{example}
Let $\Ak = \{1,2,3\}$, 
$$\zeta(1) = 121321,\ \ \zeta(2) = 2231,\ \ \zeta(3) = 31123.$$
Denoting $z_j := e^{-2\pi i \xi_j}$, $j\le 3$, we obtain
$$
\Cc_\zeta(\xi_1,\xi_2,\xi_3) = \Cc_\zeta(z_1,z_2,z_3) = \left(\begin{array}{ccc} 1 + z_1z_2 + z_1^1 z_2^2 z_3 & z_1 + z_1^2 z_2 z_3 & z_1^2 z_2 \\
                                                                                                                            z_2^2 z_3 & 1 + z_2 & z_2^2 \\
                                                                                                                            z_3 + z_1 z_3 & z_1^2 z_3 & 1 + z_1^2 z_2 z_3 \end{array} \right).
$$
\end{example}

Observe that $\Cc_\zeta(\xi)$ is a matrix-function whose entries are trigonometric polynomials in $m$ variables, with the following properties: (i) $\Cc_\zeta(0) = \Sf^t_\zeta$, (ii) all the coefficients are 0's and 1's, (iii) in every row, any given monomial appears at most once, (iv) the maximal degree of the entries in $j$-th row equals $|\zeta(j)|-1$, (v) the substitution is uniquely determined by 
$\Cc_\zeta$. The most important property is
\be \label{most}
\Cc_{\zeta_1\circ \zeta_2}(\xi) = \Cc_{\zeta_2}(\Sf^t_{\zeta_1}\xi)\Cc_{\zeta_1}(\xi),
\ee
which is verified by a  direct computation.

\medskip

Consider the skew product transformation $\Ga:\, \Om_+ \times \T^m \to \Om_+\times \T^m$ defined by
$$
\Ga(\ba, \xi) = \bigl(\sig \ba, \Sf^t_{\zeta_1} \xi \, ({\rm mod}\ \Z^m)\bigr),\ \ \mbox{where}\ \ \ba = (\zeta_n)_{n\ge 1}\ \ \mbox{and}\ \ \xi \in \T^m = \R^m/\Z^m.
$$

\begin{defi} \label{def-cocycle2}
Let $
\Cc(\ba,\xi) = \Cc_{\zeta_1}(\xi)$,
where $\ba = (\zeta_n)_{n\ge 1}$.
Then
\begin{equation}\label{def-cc}
\Cc_{_\Om}((\ba,\xi),n):= \Cc(\Ga^{n-1}(\ba,\xi))\cdot \ldots \cdot \Cc(\ba,\xi)
\end{equation}
is a complex matrix cocycle over the skew product system $(\Om_+\times\T^m, \P\times \nu_m, \Ga)$, where $\nu_m$ is the Haar measure on $\T^m$.
\end{defi}

\noindent 
The following is immediate from definitions and (\ref{most}):

\begin{lemma}
{\rm (i)} The spectral cocycle is an extension of the Rauzy-Veech cocycle in the form (\ref{Rauzy-Veech1}), namely,
 $$\Cc_{_\Om}((\ba,0),n) = \Aa(\ba,n).$$
 
{\rm (ii)} We have
$$
\Cc_{_\Om}((\ba,\xi),n) = \Cc_{\zeta^{[n]}}(\xi).
$$
\end{lemma}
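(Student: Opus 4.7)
The lemma follows almost entirely by unwinding the definitions and iterating the key identity \eqref{most}. I would prove part (ii) first, since (i) then drops out as a special case.

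\medskip

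\emph{Plan for (ii).} I proceed by induction on $n$. The base case $n=1$ is immediate: $\Cc_{_\Om}((\ba,\xi),1)=\Cc(\ba,\xi)=\Cc_{\zeta_1}(\xi)=\Cc_{\zeta^{[1]}}(\xi)$. For the inductive step, I first compute $\Ga^n(\ba,\xi)$. Iterating the base-side shift and the fibre action gives
\[
\Ga^n(\ba,\xi) = \bigl(\sig^n\ba,\; \Sf^t_{\zeta_n}\cdots \Sf^t_{\zeta_1}\xi \bmod \Z^m\bigr).
\]
Since substitution matrices are multiplicative under composition, $\Sf_{\zeta^{[n]}}=\Sf_{\zeta_1}\cdots\Sf_{\zeta_n}$, so transposing yields $\Sf^t_{\zeta_n}\cdots\Sf^t_{\zeta_1}=\Sf^t_{\zeta^{[n]}}$. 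Hence $\Ga^n(\ba,\xi)=(\sig^n\ba,\Sf^t_{\zeta^{[n]}}\xi)$, and therefore
\[
\Cc(\Ga^n(\ba,\xi))=\Cc_{\zeta_{n+1}}\bigl(\Sf^t_{\zeta^{[n]}}\xi\bigr).
\]
By the inductive hypothesis and the cocycle relation \eqref{def-cc},
\[
\Cc_{_\Om}((\ba,\xi),n+1)=\Cc_{\zeta_{n+1}}\bigl(\Sf^t_{\zeta^{[n]}}\xi\bigr)\cdot\Cc_{\zeta^{[n]}}(\xi).
\]
Now I apply \eqref{most} with the substitutions $\zeta^{[n]}$ and $\zeta_{n+1}$ in place of $\zeta_1$ and $\zeta_2$; the right-hand side above is exactly $\Cc_{\zeta^{[n]}\circ\zeta_{n+1}}(\xi)=\Cc_{\zeta^{[n+1]}}(\xi)$, completing the induction.

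\medskip

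\emph{Plan for (i).} Setting $\xi=0$ in (ii) gives $\Cc_{_\Om}((\ba,0),n)=\Cc_{\zeta^{[n]}}(0)$. By property (i) noted after Definition~\ref{def-matrix}, $\Cc_{\zeta}(0)=\Sf^t_\zeta$ for any substitution $\zeta$, so
\[
\Cc_{_\Om}((\ba,0),n)=\Sf^t_{\zeta^{[n]}}=\Sf^t_{\zeta_n}\cdots\Sf^t_{\zeta_1},
\]
which by \eqref{Rauzy-Veech1} equals $\Aa(\sig^{n-1}\ba)\cdots\Aa(\ba)=\Aa(\ba,n)$.

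\medskip

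There is no real obstacle here: the statement is a bookkeeping consequence of \eqref{most}, the identity $\Cc_\zeta(0)=\Sf^t_\zeta$, and the contravariant matching $\Sf_{\zeta_1\circ\zeta_2}=\Sf_{\zeta_1}\Sf_{\zeta_2}$. The only point that deserves care is keeping track of the order of multiplication upon transposition, i.e.\ ensuring that the fibre action in the skew product $\Ga$ is consistent with the ordering convention in \eqref{def-cc} and with the composition $\zeta^{[n]}=\zeta_1\circ\cdots\circ\zeta_n$; once this is checked, the induction goes through mechanically.
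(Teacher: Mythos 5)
Your proof is correct and is exactly the argument the paper has in mind: the paper states the lemma as "immediate from definitions and (\ref{most})," and your induction simply spells out that computation, with the order-of-multiplication bookkeeping handled properly. Nothing to add.
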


It follows from (ii)  that the spectral cocycle behaves consistently with a ``telescoping'' operation of replacing a finite sequence of substitutions by their composition; in particular, applying the Zorich acceleration algorithm we obtain the spectral cocycle that is an extension of the Zorich cocycle.

Introduce the pointwise upper  Lyapunov exponent of our cocycle, corresponding to a given vector $\vec{z}\in \C^m$: 
\be \label{Lyap0}
{\chi}_{\ba,\xi,\vec{z}}^+:= \limsup_{n\to \infty} \frac{1}{n} \log \|\Cc_{_\Om}((\ba,\xi),n)\vec{z}\|.
\ee
We also consider the pointwise upper Lyapunov exponent (which is independent of the matrix norm):
$$
{\chi}_{\ba,\xi}^+:= \limsup_{n\to \infty} \frac{1}{n} \log \|\Cc_{_\Om}((\ba,\xi),n)\|.
$$

Let $\lam$ be the Lyapunov exponent of the Rauzy-Veech cocycle, which exists by ergodicity of the Teichm\"uller flow:
\be \label{RVLyap}
\lam = \lim_{n\to\infty} \frac{1}{n} \log\|\Aa(\ba,n)\| \ \ \mbox{for $\Prob_+$-a.e.}\ \ba\in \Om_+.
\ee
Observe that for all $\xi\in \T^m$ the entries of the matrix $\Cc_{_\Om}((\ba,\xi),n)$ are not greater than the corresponding entries of the positive $\Aa(\ba,n)$. It follows that
$$
\chi^+_{\ba,\xi} \le \lam\ \ \mbox{for $\P_+$-a.e.}\ \ba\in \Om_+.
$$
Notice that 
\be \label{Lyapa}
\chi_{\ba,\xi}^+\ge 0 \ \ \mbox{for all}\ \ba\in \Om_+\ \mbox{and}\ \xi\in \T^m,
\ee
because of the special property of substitutions $\zeta_j$ appearing in the Rauzy-Veech induction \cite{veech,veechamj}; one can check with the help of (\ref{induc}) that
$$
|\det \Cc_{\zeta_j}(\xi)|\equiv 1, \xi\in \T^m,\ \ \mbox{for all}\ j\in \N,
$$
and the norm of $m\times m$ matrix is not smaller than the $m$-th root of the absolute value of its determinant.

\subsection{Statement of results} Our main result is a formula expressing the lower local dimension of spectral measures in terms of the Lyapunov exponents of the spectral cocycle. Recall that, given a probability measure-preserving flow $h_t$ on a space $\Xx$ and a test function $f\in L^2(\Xx)$ the spectral measure $\sig_f$ is a finite positive Borel measure on $\R$ defined by
$$
\widehat{\sig}_f(-t) = \int_{-\infty}^\infty e^{2 \pi i\om t}\,d\sig_f(\om) = \langle f\circ h_t, f\rangle,\ \ \ t\in \R,
$$
where $\langle \cdot,\cdot \rangle$ denotes the inner product in $L^2$.

We are interested in fractal properties of spectral measures. One of the commonly used local characteristics of a finite  measure $\nu$ on a metric space is the {\em lower local dimension} defined 
by $$\underline{d}(\nu,\om) = \liminf_{r\to 0} \frac{\log\nu(B_r(\om))}{\log r}\,.$$ Alternatively, it can be thought of as the best possible local H\"older exponent, i.e.:
$$
\und{d}(\nu,\om) = \sup\{\alpha\ge 0:\ \nu(B_r(\om)) = O(r^\alpha),\ r\to 0\}.
$$
For example, the lower local dimension is zero at a point mass and is infinite outside the compact support of a measure.

Next we define the class of test functions under consideration.
Recall that $\Hk$ is a connected component of the moduli space of Abelian differentials on a surface of genus $g\ge 2$, equipped with a probability measure $\nu$, 
invariant  and ergodic under the Teichm\"uller flow ${\bf g}_s$.
We fix a symbolic representation of the Teichm\"uller flow as described in Section~\ref{subsec-symbol}.
 Under this symbolic representation, the vertical flow on the flat surface becomes the suspension flow over an 
 $S$-adic system. So, for $\nu$-a.e.\ flat surface $(M,\omb)$, corresponding to a pair $(\ba,\vec{s})$, with $\ba\in \Om_+$ and $\vec{s}\in \R^m_+$,
 almost every point on $M$ is mapped  into a pair $(x,t)$, where $x\in X_\ba$ (the $S$-adic space) and $t \in [0,s_{x_0}]$. Our test functions will be Lip-cylindrical functions of the form (\ref{fcyl2}) in this symbolic representation. 
    
\begin{theorem} \label{th-main1}
Let $\nu$ be a probability measure on $\Hk$, invariant  and ergodic under the Teichm\"uller flow ${\bf g}_s$.
For $\nu$-almost every  Abelian differential $(M,\omb)$ the following holds. Let $(M, \omb)$ correspond to a pair $(\ba,\vec{s})$, with $\ba\in \Om_+$ and $\vec{s}\in \R^m_+$. Let $f(x,t) = \sum_{a\in \Ak}\One_{[a]}(x)\cdot \psi_a(t)$ be a Lip-cylindrical function and $\sig_f$ the spectral measure of $f$ for the suspension flow $(\Xx_\ba^{\vec{s}}, {\widetilde \mu}, h_t)$, which is measurably isomorphic to the vertical
translation flow on $(M,\omb)$.
 Fix $\om\in \R$ (the spectral parameter), let
$\xi\in \T^m$ be such that 
$$
\xi = \om \vec{s}\ (\mbox{\rm mod}\ \Z^m),
$$
and set
$$
\vec{z} = (\what{\psi}_a(\om))_{a\in \Ak}.
$$
Suppose that the upper Lyapunov exponent of the spectral cocycle at $(\ba,\xi)$, corresponding to the vector $\vec{z}$ is positive: ${\chi}^+_{\ba,\xi,\vec{z}}> 0$.
Then
 \begin{equation}\label{d-lyap}
 \underline{d}(\sig_f,\om) = 2 - \frac{2{\chi}^+_{\ba,\xi,\vec{z}}}{\lam}\,.
 \end{equation}
 If ${\chi}^+_{\ba,\xi,\vec{z}}\le 0$, then
$$
\underline{d}(\sig_f,\om) \ge 2.
$$
\end{theorem}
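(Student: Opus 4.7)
The plan is to reduce the estimate on $\sig_f(B_r(\om))$ to an estimate on the $L^2$-norm of the twisted Birkhoff integral $\wh f_R(\om;x,t) := \int_0^R e^{-2\pi i \om u}f(h_u(x,t))\,du$, and then use the substitutive structure to identify this integral with the spectral cocycle applied to the vector $\vec z$. Starting from $\wh\sig_f(\tau) = \langle f\circ h_\tau, f\rangle$ and expanding $|\wh f_R|^2$, a standard Parseval computation gives
$$
\int_{\Xx_\ba^{\vec s}} |\wh f_R(\om;\cdot)|^2\,d\wmu = \int_\R |D_R(\om - \nu)|^2\,d\sig_f(\nu),
$$
where $D_R(u) = (e^{-2\pi i u R}-1)/(-2\pi i u)$ is a Dirichlet-type kernel of mass $\sim R^2$ concentrated on $|u|\lesssim 1/R$. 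This yields the two-sided comparison $\sig_f(B_r(\om)) \asymp r^2 \int_{\Xx_\ba^{\vec s}} |\wh f_{1/r}|^2\,d\wmu$, reducing the theorem to controlling the twisted Birkhoff integral.

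The second step is the core computation. Recognizability (Theorem \ref{th-recog0}) partitions $\Xx_\ba^{\vec s}$ into level-$n$ Rokhlin towers whose bases are indexed by letters $b\in\Ak$ and whose heights are the coordinates of $\Sf^t_{\zeta^{[n]}}\vec s$. For a starting point at the base of the level-$n$ tower with base letter $b$, the suspension flow traverses in sequence the floors $u_1^b u_2^b\cdots u_{|\zeta^{[n]}(b)|}^b$ of $\zeta^{[n]}(b)$, and the twisted Birkhoff integral over the whole tower equals
$$
\sum_{j\le|\zeta^{[n]}(b)|} \exp\Bigl(-2\pi i\om \sum_{k<j}s_{u_k^b}\Bigr)\cdot \wh\psi_{u_j^b}(\om) = \bigl(\Cc_{\zeta^{[n]}}(\xi)\vec z\bigr)_b,
$$
with $\xi = \om\vec s \pmod{\Z^m}$. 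This is just the $n=1$ version of (\ref{coc0}) for $\zeta^{[n]}$, and can also be established by induction on $n$ using the cocycle identity (\ref{most}). Choose $n=n(r)$ so that $\|\Sf^t_{\zeta^{[n]}}\vec s\| \asymp R = 1/r$; by (\ref{RVLyap}), $n(r) \sim -\log r/\lam_\ba$.

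Integrating $|\wh f_R|^2$ over $\Xx_\ba^{\vec s}$ via this tower decomposition and combining with the Parseval formula from the first step yields $\sig_f(B_r(\om)) \asymp r^2 \|\Cc_{\zeta^{[n(r)]}}(\xi)\vec z\|^2$, up to multiplicative errors arising from the partial blocks at both endpoints of the integration interval. Taking logarithms and dividing by $\log r<0$,
$$
\frac{\log\sig_f(B_r(\om))}{\log r} = 2 \;-\; \frac{2}{\lam_\ba}\cdot\frac{\log\|\Cc_{\zeta^{[n(r)]}}(\xi)\vec z\|}{n(r)} + o(1).
$$
Taking $\liminf$ as $r\to 0$ matches $\limsup_{n\to\infty}\frac{1}{n}\log\|\Cc_{\zeta^{[n]}}(\xi)\vec z\| = \chi^+_{\ba,\xi,\vec z}$, yielding (\ref{d-lyap}). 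The case $\chi^+\le 0$ gives $\|\Cc^{[n]}\vec z\|\le e^{\eps n}$ for every $\eps>0$ and all large $n$, hence $\sig_f(B_r)\lesssim r^{2-\eps}$ and $\und d(\sig_f,\om)\ge 2$.

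The main technical obstacles are: first, controlling the boundary fragments in the tower decomposition of $\wh f_R$. Partial initial and terminal level-$n$ blocks can potentially be compared with $\|\Cc_{\zeta^{[n-1]}}(\xi)\vec z\|$ but not easily discarded; one has to show these contributions are of lower order, which uses recognizability and the fact that the cocycle norms grow at a controlled rate governed by the Rauzy-Veech cocycle. Second, the passage from the discrete scales $R_n = \|\Sf^t_{\zeta^{[n]}}\vec s\|$ to a continuous $r$ requires a bounded-distortion argument for consecutive Rauzy-Veech matrices (as in \cite{veech}), so that $n(r)$ is well-defined up to $O(1)$ and interpolation does not destroy the limit. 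Finally, proving the matching lower bound $\sig_f(B_r)\gtrsim r^{2-2\chi^+/\lam_\ba}$ along a subsequence — necessary for the equality in (\ref{d-lyap}) — requires that the two-sided comparison $\sig_f(B_r(\om))\asymp r^2\|\Cc_{\zeta^{[n(r)]}}(\xi)\vec z\|^2$ holds in both directions, which is where the positivity assumption $\chi^+_{\ba,\xi,\vec z}>0$ is essential: it ensures that the main term dominates and that the $\limsup$ in the definition of $\chi^+$ is actually attained along a subsequence of scales.
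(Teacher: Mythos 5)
Your overall strategy --- relating $\sig_f(B_r(\om))$ to the $L^2$-norm of the twisted Birkhoff integral and identifying that integral over a level-$n$ tower with $\bigl(\Cc_{\zeta^{[n]}}(\xi)\vec z\bigr)_b$ --- is the same as the paper's. However, the pivot of your argument, the claimed \emph{unconditional} two-sided comparison $\sig_f(B_r(\om)) \asymp r^2\int|S^{(x,t)}_{1/r}(f,\om)|^2\,d\wmu$, is false in the direction needed for the upper bound on $\und{d}(\sig_f,\om)$. Your Parseval identity is exactly \eqref{fejer}: $G_R(f,\om)=\int_\R K_R(\om-\tau)\,d\sig_f(\tau)$ with the Fej\'er kernel $K_R$. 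Since $K_R\ge 4R/\pi^2$ on $|y|\le(2R)^{-1}$, one gets $\sig_f(B_r(\om))\lesssim r\,G_{1/(2r)}(f,\om)$ (Lemma~\ref{lem-spec1}), which is the direction used for the lower bound on the dimension. The reverse inequality $G_R(f,\om)\lesssim R\,\sig_f(B_{1/R}(\om))$ fails in general, because $K_R$ has tails of order $(\pi^2Ry^2)^{-1}$: mass of $\sig_f$ far from $\om$ (say an atom at $\tau_0\ne\om$) contributes $\sim R^{-1}$ to $G_R$ even when $\sig_f$ vanishes near $\om$. Positivity of $\chi^+_{\ba,\xi,\vec z}$ does not repair this. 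The paper's way around it is Lemma~\ref{lem-specest}, a \emph{conditional} converse --- a power bound $\sig_f(B_r(\om))\le C r^{\alpha}$ at \emph{all} small scales controls the tail integral and yields $G_R\lesssim R^{1-\alpha}$ --- used inside a proof by contradiction: assume the dimension exceeds the claimed value, deduce an upper bound on $G_{R_n}$, and contradict it with a lower bound along a subsequence. This conditional converse is a genuinely missing ingredient in your write-up.

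A second, related gap: for that contradiction one needs a pointwise lower bound $|S_{R_n}^{(x,t)}(f,\om)|\gtrsim e^{(\chi^+_{\ba,\xi,\vec z}-\eps)n}$ on a set whose measure is bounded \emph{below}, and your ``up to multiplicative errors from partial blocks'' hides both halves. The paper restricts to a collar $E_{n,\tau}$ of width $\tau$ over the base of the level-$n$ tower with the good letter $b$ from \eqref{byaka2}, proves $\wmu(E_{n,\tau})\ge\tau e^{-(\lam_\ba+\eps)n}$ (Lemma~\ref{lem-measest}, which itself requires (A1) and (A2)), bounds the boundary fragments by $C\tau^{(\chi^+_{\ba,\xi,\vec z}+\eps)/\lam_\ba}$ via Lemma~\ref{lem-newup}, and then chooses $\tau=c_2\exp\bigl(n\lam_\ba\,\tfrac{\chi^+_{\ba,\xi,\vec z}-\eps}{\chi^+_{\ba,\xi,\vec z}+\eps}\bigr)$ to balance the measure loss against the error terms; this balancing is where $\chi^+_{\ba,\xi,\vec z}>0$ is actually used (so the main term dominates the additive boundary errors), not in the kernel comparison. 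Your upper bound on $|S_R|$ via the prefix--suffix decomposition, and the reduction of the moduli-space statement to the individual $S$-adic statement via recognizability, are in line with the paper.
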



\medskip

Fix $\vec{s}\in \R^m_+$. Theorem~\ref{th-main1} shows that the cocycle $\Cc_{_\Om}(\om\vec{s},n)$ controls the behaviour of spectral measures for the suspension flow $(\Xx_\ba^{\vec{s}}, {\widetilde \mu}, h_t)$.
Here and below we consider $\om\vec{s}$ as a point on the torus (mod $\Z^m)$.

As a special case of Lip-cylindrical functions,  consider {\em simple cylindrical functions}, of the form $f(x,t) = \sum_{a\in \Ak} b_a \One_{[a]}(x)$, that is,
$\psi_a(\om) = b_a\One_{[0,s_a]}$. Then 
\be \label{cylia}
\vec{z} = (\what{\psi}_a(\om))_{a\in \Ak} = \left(\frac{b_a(1-e^{-2\pi i \om s_a})}{2\pi i \om}\right)_{a\in \Ak}=: \Gam_\om \vec{b},\ \ \mbox{for}\ \om\ne 0,
\ee
and for $\om=0$ we have $\vec{z} = \Gam_0(\vec{b})=\vec{b}$.
Let $\vec{e}_j$ be the $j$-th unit coordinate vector in $\C^m$.

\begin{corollary} \label{cor-spec0}  Suppose that we are under the assumptions of Theorem~\ref{th-main1}, and let $\vec{s}\in \R_+^m$.
 Consider the suspension flow $(\Xx_\ba^{\vec{s}}, {\widetilde \mu}, h_t)$. 

{\rm (i)} Let $\om\in \R\setminus \bigcup_{a\in\Ak} (s_a^{-1}\cdot \Z)$.  For any $\vec{b}\in \C^m$ there exists $j\le m$ such that
$$
\card\Bigl\{c\in \C:\ \chi^+_{\ba,\om\vec{s},\, \Gam_\om(\vec{b} + c\vec{e}_j)}< \chi^+_{\ba,\om\vec{s}}\Bigr\} \le 1,
$$
and hence for all perturbations of $\vec{b}$ along the $j$-th direction, except possibly one, the simple cylindrical function $f$ corresponding to it, satisfies
$$
\und{d}(\sig_f,\om) = 2 - \frac{2\chi^+_{\ba,\om\vec{s}}}{\lam}\,, \  \mbox{assuming}\  \chi^+_{\ba,\om\vec{s}}>0.
$$

{\rm (ii)} For Lebesgue-a.e.\ $\vec{b}\in \C^m$, the simple cylindrical function $f$ corresponding to $\vec{b}$ satisfies
$$
\und{d}(\sig_f,\om) = 2 - \frac{2\chi^+_{\ba,\om\vec{s}}}{\lam}\ \ \mbox{for Lebesgue-a.e.}\ \om\in \R, \ \mbox{such that}\  \chi^+_{\ba,\om\vec{s}}>0.
$$
\end{corollary}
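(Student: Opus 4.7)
The plan is to deduce the corollary from Theorem~\ref{th-main1} via a simple linear-algebraic lemma about vector-valued Lyapunov exponents, followed by two applications of Fubini's theorem for part (ii).

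The lemma I would state and prove first concerns an arbitrary sequence of complex $m\times m$ matrices $(M_n)_{n\ge 1}$. Writing $\chi(\vec v):=\limsup_n \tfrac{1}{n}\log\|M_n\vec v\|$ and $\chi:=\limsup_n \tfrac{1}{n}\log\|M_n\|$, one has $\chi(\vec e_j)=\chi$ for some $j\le m$ (because $\|M_n\|\le m\max_j\|M_n\vec e_j\|$), and whenever $\chi(\vec v)=\chi$,
$$
\card\{c\in\C:\ \chi(\vec w+c\vec v)<\chi\}\le 1\quad\text{for every } \vec w\in\C^m.
$$
The second claim follows because $\chi$ is invariant under nonzero scaling and ``sub-max'' additive (that is, $\chi(\vec u+\vec v)\le\max(\chi(\vec u),\chi(\vec v))$, a consequence of $\log(a+b)\le\log 2+\max(\log a,\log b)$ and the equality $\limsup_n\max(a_n,b_n)=\max(\limsup_n a_n,\limsup_n b_n)$): two distinct bad values $c_1,c_2$ would yield $\chi=\chi((c_1-c_2)\vec v)\le\max_i\chi(\vec w+c_i\vec v)<\chi$.

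I would then apply the lemma to $M_n=\Cc_{_\Om}((\ba,\om\vec s),n)$. Under $\om\notin\bigcup_a s_a^{-1}\Z$, the diagonal matrix $\Gam_\om$ has every entry $\frac{1-e^{-2\pi i\om s_a}}{2\pi i\om}$ nonzero, so $\Gam_\om\vec e_j$ is a nonzero scalar multiple of $\vec e_j$ and consequently $\chi(\Gam_\om\vec e_j)=\chi(\vec e_j)$. Choosing $j$ with $\chi(\vec e_j)=\chi^+_{\ba,\om\vec s}$ and setting $\vec w=\Gam_\om\vec b$, the second part of the lemma gives the cardinality estimate in~(i); for the remaining (at most one excluded) values of $c$, the hypothesis $\chi^+_{\ba,\om\vec s}>0$ allows Theorem~\ref{th-main1}, applied with $\xi=\om\vec s$ and $\vec z=\Gam_\om(\vec b+c\vec e_j)$, to deliver the formula for $\und d(\sig_f,\om)$.

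Part (ii) is then obtained by Fubini. The set $\bigcup_a s_a^{-1}\Z$ is countable, hence null. For each remaining $\om$ with $\chi^+_{\ba,\om\vec s}>0$, the exceptional set
$$
B_\om:=\{\vec b\in\C^m:\ \chi^+_{\ba,\om\vec s,\,\Gam_\om\vec b}<\chi^+_{\ba,\om\vec s}\}
$$
intersects every complex line in direction $\vec e_{j(\om)}$ in at most one point, so it is Lebesgue-null in $\C^m\cong\R^{2m}$. Since $(\om,\vec b)\mapsto \chi^+_{\ba,\om\vec s,\Gam_\om\vec b}$ and $\om\mapsto\chi^+_{\ba,\om\vec s}$ are Borel as countable $\limsup$'s of continuous functions, a second Fubini application shows that for Lebesgue-a.e.\ $\vec b\in\C^m$, the set $\{\om:\ \chi^+_{\ba,\om\vec s}>0,\ \vec b\in B_\om\}$ is Lebesgue-null, which combined with Theorem~\ref{th-main1} gives~(ii). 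I do not anticipate any serious obstacle; the only mildly delicate point is the joint Borel measurability needed for Fubini, which is routine from the $\limsup$ representation of the Lyapunov exponents.
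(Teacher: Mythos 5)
Your proposal is correct and follows essentially the same route as the paper: part (i) rests on the elementary fact that some coordinate vector $\vec e_j$ attains the full exponent $\chi^+_{\ba,\om\vec s}$, combined with the ``at most one bad $c$ on each line'' observation (which the paper leaves implicit but you correctly supply via sub-max additivity of the upper Lyapunov exponent), and part (ii) is the same measurability-plus-Fubini argument. No gaps.
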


Recall that $\chi_{\ba,\om\vec{s}}^+\ge 0$ by (\ref{Lyapa}). It would be interesting to determine under what conditions the Lyapunov exponents $\chi_{\ba,\om\vec{s}}^+$ are strictly positive.

\begin{corollary} \label{cor-spec1} Suppose that we are under the assumptions of Theorem~\ref{th-main1}, and let $\vec{s}\in \R_+^m$.
Then for a.e.\ $\ba$ the following hold.

{\rm (i)} we have 
\be\label{strange}
\chi_{\ba,\om\vec{s}}^+ 
\le  \half\lam\ \ \mbox{ for Lebesgue-a.e.\ $\om\in \R$},
\ee
but
\be\label{strange2}
\chi_{\ba,\om\vec{s}}^+ 
\ge  \half\lam\ \ \mbox{ for $\sig_f$-a.e.\ $\om\in \R$}.
\ee

{\rm (ii)}
If the flow  $(\Xx_\ba^{\vec{s}}, {\widetilde \mu}, h_t)$ has a non-trivial absolutely continuous component of the spectrum, then
$\chi_{\ba,\om\vec{s}}^+ =  \half\lam_\ba$ for Lebesgue-a.e.\ $\om\in \R$.
\end{corollary}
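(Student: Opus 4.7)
The plan is to deduce both claims from the dimension formulas of Theorem~\ref{th-main1} and Corollary~\ref{cor-spec0} by feeding them two standard facts about a finite positive Borel measure $\nu$ on $\R$: by the Lebesgue differentiation theorem, $\und{d}(\nu,\om)\ge 1$ at Lebesgue-a.e.\ $\om$; by a mass-distribution principle, $\und{d}(\nu,\om)\le 1$ at $\nu$-a.e.\ $\om$ (otherwise a positive-$\nu$-measure set would live on a subset of $\R$ of Hausdorff dimension strictly greater than $1$, which is impossible).

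For the first half of (i), I invoke Corollary~\ref{cor-spec0}(ii): for Lebesgue-a.e.\ $\vec b\in\C^m$, the spectral measure $\sig_f$ of the simple cylindrical function associated with $\vec b$ satisfies $\und{d}(\sig_f,\om) = 2 - 2\chi^+_{\ba,\om\vec s}/\lam_\ba$ at Lebesgue-a.e.\ $\om$ with $\chi^+_{\ba,\om\vec s}>0$; combined with $\und{d}(\sig_f,\om)\ge 1$, this forces $\chi^+_{\ba,\om\vec s}\le \half\lam_\ba$ Lebesgue-a.e., while on $\{\chi^+_{\ba,\om\vec s}\le 0\}$ the inequality is automatic by \eqref{Lyapa}. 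For the second half of (i), I apply Theorem~\ref{th-main1} to an arbitrary Lip-cylindrical $f$: the case $\chi^+_{\ba,\xi,\vec z}\le 0$ would give $\und{d}(\sig_f,\om)\ge 2$, inconsistent with $\und{d}(\sig_f,\om)\le 1$ at $\sig_f$-a.e.\ $\om$, hence $\chi^+_{\ba,\xi,\vec z}>0$ and $2 - 2\chi^+_{\ba,\xi,\vec z}/\lam_\ba \le 1$ for $\sig_f$-a.e.\ $\om$, i.e.\ $\chi^+_{\ba,\xi,\vec z}\ge \half\lam_\ba$; since the direction-$\vec z$ exponent is dominated by the norm exponent, $\chi^+_{\ba,\xi,\vec z}\le \chi^+_{\ba,\xi} = \chi^+_{\ba,\om\vec s}$, the inequality lifts to the norm Lyapunov exponent.

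For (ii), suppose the flow admits a non-trivial absolutely continuous spectral component; one then produces (possibly at a higher level, via the extension of Theorem~\ref{th-main1} to higher-level cylindrical functions announced in the abstract, since level-$1$ Lip-cylindrical functions need not be dense in $L^2(\Xx_\ba^{\vec s},\wt\mu)$) a cylindrical test function $f$ whose spectral measure has non-zero absolutely continuous part $\sig_{f,{\rm ac}}$. Writing $\rho = d\sig_{f,{\rm ac}}/d\om$, the Lebesgue differentiation theorem gives $\und{d}(\sig_f,\om)=1$ at Lebesgue-a.e.\ $\om\in\{\rho>0\}$, so the dimension formula together with part (i) forces $\chi^+_{\ba,\om\vec s}=\half\lam_\ba$ on $\{\rho>0\}$. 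The main obstacle I anticipate is the upgrade from $\{\rho>0\}$ to Lebesgue-a.e.\ $\om\in\R$: the plan is to invoke the scaling covariance under the Teichm\"uller flow ${\bf g}_s$, which rescales the spectral parameter $\om\mapsto e^{-s}\om$, so that (along the $\nu$-typical Teichm\"uller orbit) the set $\{\chi^+_{\ba,\om\vec s}=\half\lam_\ba\}$ is invariant modulo Lebesgue-null sets under multiplicative dilation; a Lebesgue density argument then promotes positive measure to full measure in $\R$.
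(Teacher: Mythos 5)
Part (i) of your proposal is correct and is essentially the paper's own argument (given there for the $S$-adic analogue, Corollary~\ref{cor-spec10}(i)): the upper bound comes from Corollary~\ref{cor-spec0}(ii) plus the a.e.-finiteness of $\limsup_{r\to 0}\sig_f(B_r(\om))/(2r)$ for a finite measure on $\R$, and the lower bound from $\und{d}(\sig_f,\om)\le 1$ at $\sig_f$-a.e.\ $\om$, fed into the dichotomy of Theorem~\ref{th-main1} together with $\chi^+_{\ba,\xi,\vec z}\le\chi^+_{\ba,\xi}$. Nothing to add there.

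Part (ii) does not close. A minor, repairable point first: for a level-$\ell$ cylindrical function the dimension formula involves $\chi^+_{\ba,\om\vec s^{\,(\ell)},\vec z}$, not $\chi^+_{\ba,\om\vec s}$; to pass to the latter you need the identity $\Cc_\ba(\om\vec s^{\,(\ell)},n)=\Cc_\ba(\om\vec s,n+\ell)\,\Cc_\ba(\om\vec s,\ell)^{-1}$, valid off the countable zero set of the non-trivial trigonometric polynomial $\om\mapsto\det\Cc_\ba(\om\vec s,\ell)$ (non-trivial here because the Rauzy--Veech matrices are unimodular); this is exactly the step carried out in the paper's proof of Corollary~\ref{cor-spec10}(iii). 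The serious problem is your final ``upgrade'' from $\{\rho>0\}$ to Lebesgue-a.e.\ $\om$. The Teichm\"uller flow does \emph{not} give dilation invariance of the function $\om\mapsto\chi^+_{\ba,\om\vec s}$: applying ${\bf g}_s$ and returning to the Poincar\'e section replaces the symbolic data $(\ba,\vec s)$ by $(\sig^k\ba,\vec s\,')$, so the resulting identities relate $\chi^+_{\ba,\om\vec s}$ to the exponent of a \emph{different} cocycle trajectory, not to $\chi^+_{\ba,e^{-s}\om\vec s}$. There is therefore no invariance, even modulo null sets, on which to run a density argument, and this step of your proposal is unsupported.

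The paper's route to (ii) is the contrapositive of the singularity criterion (the translation-flow analogue of Corollary~\ref{cor-spec10}(ii)--(iii)): on $A=\{\om:\ \chi^+_{\ba,\om\vec s}<\half\lam_\ba\}$ the lower bound of Theorems~\ref{th-main1} and \ref{th-main11} gives $\und{d}(\sig_f,\om)>1$, hence $\liminf_{r\to 0}\sig_f(B_r(\om))/(2r)=0$, for a.e.\ $\om\in A$ and every cylindrical $f$ of every level; by Lemma~\ref{lem-approx} and total-variation convergence of spectral measures, the absolutely continuous part of the maximal spectral type vanishes a.e.\ on $A$. Thus a non-trivial a.c.\ component forces the a.c.\ spectrum to be carried by $\{\chi^+_{\ba,\om\vec s}=\half\lam_\ba\}$ (using part (i)), which in particular has positive Lebesgue measure. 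You should present part (ii) in this form; note that neither this argument nor your proposal yields the full-measure conclusion from a positive-measure one by any further soft step, so the honest formulation is the contrapositive one, which is what the paper actually proves.
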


\begin{remark} {\em
In the paper \cite{BuSo2} we proved H\"older continuity of spectral measures for almost every translation flow in the stratum $\Hk(2)$, essentially via showing that there exists $\eps>0$ such that
\be \label{unif}
\chi_{\ba,\om\vec{s}}^+ \le \lam-\eps\ \ \mbox{for {all} $\om\ne 0$, for a.e.}\ \vec{s}.
\ee
Although this is much weaker than the estimate in (\ref{strange}) for a single $\om$, the difference is that (\ref{unif}) is obtained for {\em all} non-zero spectral parameters, rather than for Lebesgue-almost all.}
\end{remark}


\subsection{Higher-level cylindrical functions} Cylindrical functions used so far in this paper do not suffice to describe the spectral type of the flow completely. Rather, we need functions depending on an arbitrary fixed number of symbols. Fix $\ell\ge 1$. Cylindrical functions of level $\ell$ depend on the first $\ell$ edges of the path representing a point in the Bratteli-Vershik representation. In the 
$S$-adic framework, we say that $f$ is a Lip-cylindrical function of level $\ell$ if
\be \label{fcyl3}
f(x,t)=\sum_{a\in \Ak} \One_{\zeta^{[\ell]}[a]}(x) \cdot \psi^{(\ell)}_a(t),\ \ \mbox{with}\ \ \psi^{(\ell)}_a\in \Lip[0,s^{(\ell)}_a],
\ee
where
$$
\vec{s}^{\,(\ell)}= (s^{(\ell)}_a)_{a\in \Ak}:= \Sf_{\zeta^{[\ell]}}^t \vec{s}.
$$
This representation depends on the notion of recognizability for the sequence of substitutions, see Definition~\ref{def-recog}, as will be explained in the next section.
The following is an extension of Theorem~\ref{th-main1} to the case of higher-order cylindrical functions.

\begin{theorem} \label{th-main11} Let $\nu$ be a probability measure on $\Hk$, invariant  and ergodic under the Teichm\"uller flow ${\bf g}_s$.
For $\nu$-almost every  Abelian differential $(M,\omb)$ the following holds.   Let $(M, \omb)$   correspond to a pair $(\ba,\vec{s})$, with $\ba\in \Om_+$ and $\vec{s}\in \R^m_+$. Let $f(x,t) = \sum_{a\in \Ak} \One_{\zeta^{[\ell]}[a]}(x) \cdot \psi^{(\ell)}_a(t)$ be a Lip-cylindrical function of level $\ell$ and $\sig_f$ the spectral measure of $f$ for the suspension flow $(\Xx_\ba^{\vec{s}}, {\widetilde \mu}, h_t)$. Fix $\om\in \R$, let
$\xi\in \T^m$ be such that 
$$
\xi = \om \vec{s^{(\ell)}}\ (\mbox{\rm mod}\ \Z^m),
$$
and set
$$
\vec{z} = (\what{\psi}^{(\ell)}_a(\om))_{a\in \Ak}.
$$
Suppose that the upper Lyapunov exponent of the spectral cocycle at $(\ba,\xi)$, corresponding to the vector $\vec{z}$ is positive: ${\chi}^+_{\ba,\xi,\vec{z}}> 0$.
Then
 \begin{equation}\label{d-lyap-gen}
 \underline{d}(\sig_f,\om) = 2 - \frac{2{\chi}^+_{\ba,\xi,\vec{z}}}{\lam}\,.
 \end{equation}
 If ${\chi}^+_{\ba,\xi,\vec{z}} \le 0$, then
$$
\underline{d}(\sig_f,\om) \ge 2.
$$
\end{theorem}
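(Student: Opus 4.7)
I would reduce Theorem~\ref{th-main11} to Theorem~\ref{th-main1} via a measure-preserving isomorphism of suspension flows that converts the level-$\ell$ structure into a level-$1$ structure. By recognizability (Theorem~\ref{th-recog0}, applicable since the Rauzy-Veech substitution matrices are unimodular), the map $\zeta^{[\ell]}\!:X_{\sigma^\ell\ba}\to X_\ba$ is injective off a null set, and together with its $T$-translates yields the Rokhlin partition
\[
X_\ba=\bigsqcup_{a\in\Ak}\bigsqcup_{k=0}^{|\zeta^{[\ell]}(a)|-1}T^k\zeta^{[\ell]}[a].
\]
Telescoping the columns of heights produces an isomorphism
\[
\Phi:\ (\Xx_\ba^{\vec s},h_t,\wtil\mu)\ \longrightarrow\ (\Xx_{\sigma^\ell\ba}^{\vec s^{(\ell)}},h_t,\wtil\mu^{(\ell)}),
\]
sending $(T^k\zeta^{[\ell]}(y),\tau)$ to $\bigl(y,\,\tau+\sum_{j<k}s_{(\zeta^{[\ell]}(y))_j}\bigr)$. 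Under $\Phi$, the level-$\ell$ Lip-cylindrical function $f$ on $\Xx_\ba^{\vec s}$ becomes the level-$1$ Lip-cylindrical function $\tilde f(y,t)=\sum_a\One_{[a]}(y)\psi_a^{(\ell)}(t)$ on the new suspension, with the same Fourier coefficients $\what\psi_a^{(\ell)}$; in particular $\sig_f=\sig_{\tilde f}$.

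By shift-invariance of $\P_+$ under $\sigma$, the pair $(\sigma^\ell\ba,\vec s^{(\ell)})$ is the symbolic code of a $\nu$-typical Abelian differential whenever $(\ba,\vec s)$ is, so Theorem~\ref{th-main1} applies to $\tilde f$ on $\Xx_{\sigma^\ell\ba}^{\vec s^{(\ell)}}$ with spectral parameter $\om$, fibre coordinate $\om\vec s^{(\ell)}$ and vector $\vec z=(\what\psi_a^{(\ell)}(\om))_a$. This yields
\[
\und d(\sig_f,\om)\,=\,2-\frac{2\,\chi^+_{\sigma^\ell\ba,\,\om\vec s^{(\ell)},\,\vec z}}{\lam_{\sigma^\ell\ba}},
\]
with the analogous lower bound in the non-positive case. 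Shift invariance of the Rauzy-Veech Lyapunov exponent immediately gives $\lam_{\sigma^\ell\ba}=\lam_\ba$ almost surely, matching the denominator in \eqref{d-lyap-gen}.

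It remains to identify the spectral-cocycle Lyapunov exponent above with the one in \eqref{d-lyap-gen}, which is taken at the base point $(\ba,\om\vec s^{(\ell)})$. For this I would appeal to the multiplicative identity \eqref{most}, from which follows
\[
\Cc_{_\Om}((\ba,\xi),n)\,=\,\Cc_{_\Om}\bigl(\Gamma^{\ell}(\ba,\xi),\,n-\ell\bigr)\cdot \Cc_{_\Om}((\ba,\xi),\ell),
\]
so that the first $\ell$ factors contribute only a uniformly bounded prefactor to the cocycle product and do not affect its Lyapunov exponent along a fixed direction.

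\textbf{The main obstacle} lies precisely in this last identification. The iterate $\Gamma^{\ell}(\ba,\om\vec s^{(\ell)})$ equals $(\sigma^\ell\ba,\,\om(\Sf^t_{\zeta^{[\ell]}})^2\vec s)$, whose fibre coordinate is \emph{not} $\om\vec s^{(\ell)}$, so a plain shift-invariance argument does not directly equate $\chi^+_{\sigma^\ell\ba,\,\om\vec s^{(\ell)},\,\vec z}$ with $\chi^+_{\ba,\,\om\vec s^{(\ell)},\,\vec z}$ along a fixed vector. A cleaner alternative, which I would pursue as a robust backup, is to perform the matrix Riesz-product expansion of $\sig_f$ directly in the level-$\ell$ Rokhlin decomposition of $\Xx_\ba^{\vec s}$: the cocycle at $(\ba,\om\vec s^{(\ell)})$ along the input vector $\vec z$ then arises intrinsically from the Lip-cylindrical data and the Fourier transforms $\what\psi_a^{(\ell)}$, and the proof of Theorem~\ref{th-main1} transfers almost verbatim with $\vec s$ and $\what\psi_a$ replaced by $\vec s^{(\ell)}$ and $\what\psi_a^{(\ell)}$.
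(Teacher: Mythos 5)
Your proposal is correct in substance, and your ``backup'' is exactly the paper's argument: the paper derives Theorem~\ref{th-main11} from Theorem~\ref{th-main1110}, whose proof consists precisely of rewriting the twisted Birkhoff integral in the level-$\ell$ Kakutani--Rokhlin decomposition --- using recognizability to write $x=T^i\zeta^{[\ell]}(x')$ with $x'\in X_{\sig^\ell\ba}$, so that $S^{(\zeta^{[\ell]}(x'),0)}_{\wt R}(f,\om)=\Phi^{\vec s^{\,(\ell)}}_\fu(x'[0,N-1],\om)$ with $\fu(a)=\what\psi^{(\ell)}_a(\om)$ --- and then running the proof of Theorem~\ref{th-main10} verbatim with $(\sig^\ell\ba,\vec s^{\,(\ell)},\what\psi^{(\ell)}_a)$ in place of $(\ba,\vec s,\what\psi_a)$; the passage to the moduli-space statement is then identical to the derivation of Theorem~\ref{th-main1}. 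Your primary route (a flow isomorphism onto $\Xx^{\vec s^{\,(\ell)}}_{\sig^\ell\ba}$ followed by Theorem~\ref{th-main1} applied to the shifted pair) is the same computation packaged as a reduction, and is legitimate once one checks that $(\sig^\ell\ba,\vec s^{\,(\ell)})$ lies in the full-measure good set --- which it does, since (A1$'$), (A3), recognizability and aperiodicity are stable under shifting and (A2-$\ell$) (a consequence of (A2) and unimodularity) supplies $\lam_{\sig^\ell\ba}=\lam_\ba$. Concerning your ``main obstacle'': you are right that the exponent produced is that of the cocycle over the shifted sequence, i.e.\ of the products $\Cc_{\zeta_{\ell+n}}(\cdots)\cdots\Cc_{\zeta_{\ell+1}}(\om\vec s^{\,(\ell)})$, and not literally of $\Cc_{_\Om}((\ba,\om\vec s^{\,(\ell)}),n)$; but the paper's own proof does not resolve this any differently --- it is a notational imprecision in the statements of Theorems~\ref{th-main11} and \ref{th-main1110} (compare the identity $\Cc_{\sig^\ell\ba}(\om\vec s^{\,(\ell)},n)=\Cc_\ba(\om\vec s,n+\ell)\,\Cc_\ba(\om\vec s,\ell)^{-1}$ invoked in the proof of Corollary~\ref{cor-spec10}(iii), which ties the naturally arising exponent to the base point $\om\vec s$ rather than $\om\vec s^{\,(\ell)}$). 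So there is no mathematical gap in your argument beyond one already implicit in the paper, and your identification of where the notation creaks is accurate.
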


This theorem has corollaries that are exact analogues of Corollaries~\ref{cor-spec0} and \ref{cor-spec1} for the higher-level cylindrical functions.


\section{Suspensions over $S$-adic systems}

The results of the previous section are deduced from more general results, obtained for suspensions over an {\em individual} $S$-adic system, satisfying some natural assumptions. As a special case, we obtain results for classical substitution systems.

Suppose we are given a sequence of substitutions $\ba=(\zeta_j)_{j\ge 1}\in \FrA$.
We continue to use the notation $\Sf_\zeta$ for the substitution matrix of $\zeta$, see (\ref{sub-mat}). Observe that $\Sf_{\zeta_1\circ \zeta_2} = \Sf_{\zeta_1}\Sf_{\zeta_2}$. In this section we do not assume that $\det\Sf_\zeta\ne 0$, allowing degenerate substitution matrices.
We will sometimes denote
$$
\Sf_j:= \Sf_{\zeta_j}\ \ \mbox{and}\ \ \Sf^{[n]}:= \Sf_{\zeta^{[n]}}.
$$
We will also consider subwords of the sequence $\ba$ and the corresponding substitutions obtained by composition. We write
$$
\Sf_\bq = \Sf_n\cdots \Sf_\ell\ \ \mbox{for}\ \ \bq = \zeta_{n}\ldots\zeta_{\ell}.
$$
We will be using matrix 1-norm; recall that it equals the maximal absolute column sum of the matrix. Thus, $${\|\Sf_\zeta\|}_1 = \max_{a\in \Ak}|\zeta(a)|,$$
by the definition of the substitution matrix.

Recall that, given a sequence of substitutions $\ba$, we denote by $X_\ba\subset \Ak^\Z$ the subspace of all two-sided sequence whose every subword appears as a subword of 
$\zeta^{[n]}(b)$ for some $b\in \Ak$ and $n\ge 1$. Let $T$ be the left shift on $\Ak^\Z$; then $(X_{\ba},T)$ is the (topological) $S$-adic dynamical system.
We restate our basic standing assumption in the current notation:

\medskip

{\bf (A1)} {\em There  is a word $\bq$ which appears in $\ba$ infinitely often, for which $\Sf_\bq$ has all entries strictly positive.}

\medskip

For random sequences of substitutions, a stronger condition (A1$'$), which provides a certain regularity of appearances of the ``good word'' $\bq$,
holds  by the Ergodic Theorem, see Lemma~\ref{lem-vspoma1} below:

\medskip

{\bf (A1$'$)} {\em For any $\eps>0$ there exists $n_0=n_0(\eps)$ such that for all $n\ge n_0$ the word $\bq$, with $\Sf_\bq$ strictly positive, appears as a subword of 
$\zeta_{\lfloor n(1-\eps) \rfloor+1}, \zeta_{\lfloor n(1-\eps) \rfloor+2},\ldots, \zeta_n$.}

\medskip

As explained in Section~\ref{sec-Sadic}, condition (A1) implies that $(X_\ba,T)$ is minimal and uniquely ergodic.
Let $\mu$ be the unique invariant Borel probability measure for the system $(X_\ba,T)$. Consider $(\Xx_\ba^{\vec{s}}, \wt{\mu}, h_t)$, the suspension flow over $(X_{\ba},\mu, T)$,  corresponding to a piecewise-constant roof function determined by $\vec{s}\in \R^m_+$. The second basic assumption is the existence of the Lyapunov exponent for the non-stationary Markov chain:

\medskip

{\bf (A2)} {\em The following limit exists:}
\be \label{FK0}
\lam_\ba:= \lim_{n\to \infty} \frac{1}{n} \log\|\Sf^{[n]}\|<\infty.
\ee

 \begin{defi} \label{def-cocycle} Given a substitution $\zeta$,
we consider the complex matrix-valued function $\Cc_\zeta(\xi)$ defined on $\R^m$ and on the torus $\T^m$ by formula (\ref{coc0}).
For a sequence of substitutions $\ba = (\zeta_n)_{n\ge 1}$, let
$$
\Cc_\ba(\xi,n):= \Cc_{\zeta_n}\bigl(\Sf^t_{\zeta^{[n-1]}}\xi \bigr)\cdot \ldots \cdot \Cc_{\zeta_2}(\Sf^t_{\zeta_1}\xi)\Cc_{\zeta_1}(\xi),\ \ n\ge 1.
$$
Under the assumption $\det(\Sf_{\zeta_j})\ne 0$, $j\ge 1$, the matrix $\Cc_\ba(\xi,n)$ may be viewed as a complex matrix cocycle on the torus $\T^m$ over the
 non-stationary toral endomorphism $E_{\zeta_n}:\T^m\to \T^m$ at time $n$, where $E_\zeta(\xi) = \Sf^t_\zeta\xi$ {\rm (mod} $\Z^m${\rm )}.
\end{defi}

Similarly to the previous section, we consider the pointwise upper  Lyapunov exponent of our cocycle, corresponding to a given vector $\vec{z}\in \C^m$: 
\be \label{Lyap1}
{\chi}_{\ba,\xi,\vec{z}}^+=\limsup_{n\to \infty} \frac{1}{n} \log \|\Cc_\ba(\xi,n)\vec{z}\|, 
\ee
as well as
$$
{\chi}_{\ba,\xi}^+= \limsup_{n\to \infty} \frac{1}{n} \log \|\Cc_\ba(\xi,n)\|.
$$
Note that $\Cc_\ba(0,n) = \Sf_{\zeta^{[n-1]}}^t$, and for all $\xi$ the absolute values of the entries of $\Cc_\ba(\xi,n)$ are not greater than those of $\Sf_{\zeta^{[n-1]}}^t$. Therefore,
$$\chi_{\ba,\xi}^+ \le \lam_\ba,\ \ \mbox{ for all}\ \xi\in \T^m.$$
where $\lam_\ba$ is defined in (\ref{FK0}).
We need to add one more requirement on the sequence $\ba$.

\medskip

{\bf (A3)} {\em We have}
\begin{equation} \label{ass2}
\lim_{n\to \infty} \frac{1}{n} \log(1+\|\Sf_n\|) = 0.
\end{equation}

In the case of a ``random'' $S$-adic system, as in the previous section, these properties are deduced with the help of the next lemma. Let $\Om_+$ be a shift-invariant subspace of $\FrA^\N$ (see Notation~\ref{not-1}), with an invariant measure $\P_+$. 

\begin{lemma} \label{lem-vspoma1} Suppose that the following properties hold:

{\bf (C1)} the system $(\Om_+,\sig,\P_+)$ is ergodic;

{\bf (C2)} the function $\ba \mapsto \log(1 + \|\Sf_{\zeta_1}\|)$ is integrable;

{\bf (C3)} there is a word $\bq$ admissible for sequences in $\Om_+$, such that all entries of the matrix $\Sf_\bq$ are positive and $\P_+([\bq])>0$.

Then conditions {\bf (A1$'$)}, {\bf (A2)}, and {\bf (A3)} hold $\P_+$-almost surely.
\end{lemma}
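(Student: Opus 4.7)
The three assertions are independent and each one reduces to a standard application of ergodic theory on $(\Om_+, \sig, \P_+)$. I will verify them in the order \textbf{(A3)}, \textbf{(A2)}, \textbf{(A1$'$)}, since \textbf{(A2)} uses the integrability set up in \textbf{(A3)}.

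\emph{Step 1: (A3).} Let $\phi(\ba) := \log(1 + \|\Sf_{\zeta_1}\|)$, which is integrable by \textbf{(C2)}. Since the law of $\phi\circ\sig^{n-1}$ equals the law of $\phi$, Markov's inequality gives, for any fixed $\eps>0$, $\sum_{n\ge 1}\P_+(\phi\circ\sig^{n-1} > \eps n) = \sum_{n\ge 1}\P_+(\phi > \eps n) \le \eps^{-1}\E_{\P_+}[\phi]<\infty$. By Borel--Cantelli, $\P_+$-a.s.\ the event $\{\phi\circ\sig^{n-1} > \eps n\}$ occurs only finitely often, so $\limsup_n n^{-1}\log(1+\|\Sf_n\|)\le \eps$ a.s. Taking $\eps$ along a sequence $\to 0$ gives \textbf{(A3)}.

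\emph{Step 2: (A2).} Set $a_n(\ba):= \log\|\Sf^{[n]}(\ba)\|$. Because $\Sf^{[n+k]}(\ba)=\Sf^{[n]}(\ba)\cdot \Sf^{[k]}(\sig^n\ba)$ (the last $k$ substitutions in $\zeta^{[n+k]}$ form $\zeta^{[k]}$ shifted by $n$), submultiplicativity of the matrix $1$-norm yields
\[
a_{n+k}(\ba)\le a_n(\ba)+a_k(\sig^n\ba).
\]
Also $a_1=\phi$ up to an additive bounded term, so $a_1^+\in L^1(\P_+)$ by \textbf{(C2)}, and $a_n\ge 0$ since $\|\Sf^{[n]}\|_1\ge 1$ (substitution matrices have nonnegative integer entries with at least one entry per column being positive). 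By Kingman's subadditive ergodic theorem, together with the ergodicity hypothesis \textbf{(C1)}, the limit $\lam_\ba=\lim_n n^{-1}a_n(\ba)$ exists $\P_+$-a.s.\ and is constant; finiteness follows from $n^{-1}a_n\le n^{-1}\sum_{j=1}^n \phi\circ\sig^{j-1}\to \E_{\P_+}[\phi]<\infty$ by Birkhoff.

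\emph{Step 3: (A1$'$).} Let $\bq$ be the positive word from \textbf{(C3)}, with $P:=\P_+([\bq])>0$, and set $N_n(\ba):=\sum_{k=0}^{n-1}\One_{[\bq]}(\sig^k\ba)$, the number of occurrences of $\bq$ in $\zeta_1\ldots\zeta_{n+|\bq|-1}$. By Birkhoff's theorem and \textbf{(C1)}, $N_n(\ba)/n\to P$ for $\P_+$-a.e.\ $\ba$. Fix $\eps>0$ and $\ba$ in the full-measure set of convergence. Write
\[
N_n(\ba)-N_{\lfloor n(1-\eps)\rfloor}(\ba) = n\eps P+o(n),
\]
which tends to $+\infty$. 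Hence for all $n\ge n_0(\eps,\ba)$ there is at least one index $k\in(\lfloor n(1-\eps)\rfloor,\,n]$ with $\sig^k\ba\in[\bq]$, meaning $\bq$ occurs as a subword of $\zeta_{\lfloor n(1-\eps)\rfloor+1},\ldots,\zeta_n$. Applying this simultaneously to a countable sequence $\eps_k\downarrow 0$ gives \textbf{(A1$'$)} on a full-measure set.

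\emph{Main obstacle.} The only delicate point is \textbf{(A2)}: one must verify that the subadditivity relation is the correct one (left multiplication by $\Sf^{[n]}$ with the increment $\Sf^{[k]}\circ\sig^n$, not the reverse), and that $a_1^+$ is integrable rather than just $a_1$; both are immediate here thanks to (C2) and to $\|\Sf^{[n]}\|_1\ge 1$. The other two items are routine Borel--Cantelli and Birkhoff arguments.
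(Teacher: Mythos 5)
Your proof is correct and follows essentially the same route as the paper's: the subadditive (Kingman/Furstenberg--Kesten) argument for \textbf{(A2)} and the Birkhoff counting argument for \textbf{(A1$'$)} are exactly what the paper does. The only divergence is \textbf{(A3)}, where the paper telescopes Birkhoff averages of $\phi=\log(1+\|\Sf_{\zeta_1}\|)$ while you use $\sum_{n}\P_+(\phi>\eps n)\le \eps^{-1}\E\phi$ plus Borel--Cantelli; this is equally valid, with the small caveat that the displayed bound is the tail-sum (layer-cake) estimate for an integrable nonnegative function rather than termwise Markov, which by itself would give only the non-summable bound $\E\phi/(\eps n)$.
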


For the proof, note that property (A2) for $\P_+$-a.e.\ $\ba$ follows from  (C1) and (C2) by the Furstenberg-Kesten theorem on the existence of Lyapunov exponent. The a.s.\ validity of (A3) is immediate from the Birkhoff-Khinchin Ergodic Theorem and (C2). Finally, the
property (A$1'$) for $\P_+$-a.e.\ $\ba$ is a consequence of (C1), (C3), and the following standard fact.

\medskip

\noindent {\bf Observation.} {\em
Let $(X,T,\mu)$ be an ergodic measure-preserving system and $A\subset X$ is measurable, with $\mu(A)>0$. Then for $\mu$-a.e.\ $x\in X$, for every $\eps>0$, there exists $n_0= n_0(x)$ such that 
$$
\forall\,n\ge n_0,\ \exists\, k \in [n(1-\eps),n]\cap \N:\ \ T^k x \in A.
$$
}
 To verify the latter, it suffices to note that by the Ergodic Theorem,
for a.e.\ $x$ we have $$n^{-1}\cdot \#\{k\in [0,n-1]\cap \N:\ T^k x\in A\}\to \mu(A),\ \ \mbox{ as}\ n\to \infty.$$

Now we state the main result for $S$-adic systems.


\begin{theorem} \label{th-main10}
Let $\ba=(\zeta_j)_{j\ge 1}\in \FrA^\N$ be a sequence of substitutions 
satisfying the conditions {\bf (A1$'$)}, {\bf (A2)}, and {\bf (A3)}, such that the $S$-adic shift $(X_\ba,T)$ is aperiodic and $\ba$ is recognizable.
Let $\vec{s}\in \R_+^m$ and consider the suspension flow $(\Xx_\ba^{\vec{s}},h_t,\wt \mu)$ over the uniquely ergodic system $(X_\ba,T,\mu)$. 
 Let $f(x,t)=\sum_{j\in \Ak} \One_{[j]}(x)\cdot \psi_j(t)$ be a Lip-cylindrical function and $\sig_f$ the corresponding spectral measure.
 Fix $\om\in \R$ and let 
 $$
\xi = \om\vec{s} \ \mbox{\rm (mod $\Z^m$)},\ \ \vec{z} = (\what{\psi}_j(\om))_{j\in \Ak}.
$$
Suppose that  ${\chi}^+_{\ba,\xi,\vec{z}}> 0$.
Then
 \be \label{ki2}
 \underline{d}(\sig_f,\om) = 2 - \frac{2{\chi}^+_{\ba,\xi,\vec{z}}}{\lam_\ba}\,.
 \ee
 If ${\chi}^+_{\ba,\xi,\vec{z}}\le 0$, then
\be \label{kia}
\underline{d}(\sig_f,\om) \ge 2.
\ee
\end{theorem}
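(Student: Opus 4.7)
The plan is to realize the spectral measure as the push-forward of a product of matrices $\Cc_\ba(\xi, n)$ applied to $\vec z$, via the by-now-standard bridge ``Parseval--tower decomposition--matrix cocycle.'' The first step is the key identity, which is essentially a direct computation: for every $b \in \Ak$ and every $x$ lying in the base of the level-$n$ Rokhlin tower over $[b]$ (well-defined by recognizability), the phase-modulated Birkhoff integral along the suspension orbit equals
\[
\int_0^{s_b^{(n)}} e^{-2\pi i \om t} f(h_t(x, 0))\, dt \,=\, [\Cc_\ba(\xi, n)\vec z]_b,
\]
where $s_b^{(n)} := ((\Sf^{[n]})^t \vec s)_b$. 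This is proved by splitting the orbit of length $s_b^{(n)}$ into the $|\zeta^{[n]}(b)|$ consecutive segments corresponding to the letters of $\zeta^{[n]}(b) = u_1\cdots u_{|\zeta^{[n]}(b)|}$; each segment contributes a phase $e^{-2\pi i \om \sum_{k<j} s_{u_k}} \what\psi_{u_j}(\om)$, and substituting $\xi = \om \vec s\pmod{\Z^m}$ and regrouping by letter matches the matrix entries of Definition~\ref{def-matrix}. The cocycle identity \eqref{most} then gives $\Cc_\ba(\xi, n) = \Cc_{\zeta^{[n]}}(\xi)$, so the identity above is exactly the sought link.

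Next I would invoke the Plancherel identity for the spectral measure, which yields
\[
\|S_{T,\om}f\|_{L^2(\wt\mu)}^2 \,=\, \int |\widehat{\One_{[0,T]}}(\om - \lambda)|^2\, d\sig_f(\lambda),\qquad S_{T,\om}f(y) := \int_0^T e^{-2\pi i \om s} f(h_s y)\, ds.
\]
Two consequences follow. The easy direction: $|\widehat{\One_{[0,T]}}(\om-\lam)|^2 \ge cT^2$ on $B(\om, 1/(2T))$, so $\sig_f(B(\om, 1/T)) \lesssim T^{-2}\,\|S_{T,\om}f\|_{L^2}^2$. The harder direction: convolving $\sig_f$ with a positive Fej\'er-type kernel of width $1/T$ produces a matching lower bound on $\sig_f(B(\om, cT^{-1}))$ in terms of an $L^2$-averaged Birkhoff integral over a window of length $\asymp T$. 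Together, the asymptotic behaviour of $\sig_f$-balls around $\om$ is controlled, up to subexponential errors, by $\|S_{T,\om}f\|_{L^2}$.

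The third step is the tower decomposition. Pick $n = n(T)$ with $h_n \le T < h_{n+1}$, where $h_n$ is a representative height of level-$n$ towers; by (A2) and (A3), $h_n$ grows essentially geometrically at rate $\lam_\ba$, and $h_{n+1}/h_n = e^{o(n)}$. Every orbit of length $T$ decomposes into at most $\lceil T/h_n\rceil + 1$ consecutive full traversals of level-$n$ towers, each contributing precisely one entry of $\Cc_\ba(\xi,n)\vec z$ (by Step 1), together with two boundary pieces of length $\le h_n$. A Cauchy--Schwarz estimate, together with the fact that (A1$'$) forces the level-$n$ base measures to be comparable uniformly, yields
\[
\|S_{T,\om}f\|_{L^2}^2 \,\asymp\, \frac{T}{h_n}\,\|\Cc_\ba(\xi,n)\vec z\|^2 + (\text{subexponential error}).
\]
Combining with the Parseval inequalities and $h_n \sim e^{n\lam_\ba}$, the upper-Lyapunov bound $\|\Cc_\ba(\xi,n)\vec z\| \le e^{n(\chi^+ + \eps)}$ gives $\sig_f(B(\om, r)) \lesssim r^{2 - 2\chi^+/\lam_\ba - o(1)}$, so $\und{d}(\sig_f,\om) \ge 2 - 2\chi^+/\lam_\ba$, while along a $\limsup$-subsequence $n_k$ with $\|\Cc_\ba(\xi,n_k)\vec z\| \ge e^{n_k(\chi^+ - \eps)}$ the matching lower bound yields the reverse inequality, and the two together give \eqref{ki2}. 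The case $\chi^+_{\ba,\xi,\vec z} \le 0$ reduces to the same argument with the growth rate replaced by any $\eps > 0$, forcing $\und{d}(\sig_f,\om) \ge 2$.

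The main obstacle is the lower bound on $\sig_f(B(\om, r))$: translating the pointwise equality $I_n(b) = [\Cc_\ba(\xi,n)\vec z]_b$ at one well-placed starting point into a genuine $L^2$-lower bound on $S_{T,\om}f$ requires that the boundary pieces of the tower decomposition do not cause destructive cancellation with the bulk. This is where the combination of (A1$'$) (uniform positivity/primitivity so that the bases of level-$n$ towers carry $\mu$-mass bounded away from $0$) and (A3) (subexponential size of individual substitutions so that boundary pieces are negligible at exponential scale) is essential; the argument is modelled on the matrix Riesz-product estimates of \cite{BuSo1} and must be carried out carefully enough to preserve the $\eps$-error so that the limsup is correctly identified.
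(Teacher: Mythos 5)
Your overall architecture is the one the paper actually follows (twisted Birkhoff integrals over full tower traversals equal entries of $\Cc_\ba(\xi,n)\vec z$; Fej\'er-kernel lemmas \`a la Hof to pass between $G_R(f,\om)$ and $\sig_f$-balls; prefix--suffix tower decomposition for the upper bound; a well-placed flow box for the lower bound), but two of your quantitative steps would fail as literally stated. First, the two-sided bound $\|S_{T,\om}f\|_{L^2}^2 \asymp \frac{T}{h_n}\|\Cc_\ba(\xi,n)\vec z\|^2 + (\text{error})$ is not correct in the lower-bound direction: an orbit of length $T$ containing several full level-$n$ traversals contributes a sum $\sum_j e^{-2\pi i\om |u_j|_{\vec s}}[\Cc_\ba(\xi,n)\vec z]_{b_j}$ of phase-shifted entries (cf.\ \eqref{eq-Phi}), and these can cancel destructively even when each term is large; no appeal to Cauchy--Schwarz or to comparability of base measures rules this out. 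The paper never claims such an equivalence: the lower bound is proved only for the specific time $R_n=|\zeta^{[n]}(b)|_{\vec s}$ of a \emph{single} traversal of a tower over a letter $b$ chosen so that $|\Phi^{\vec s}_\fu(\zeta^{[n]}(b),\om)|\ge e^{(\chi^+-\eps)n}$, and only over the thin flow box $E_{n,\tau}$ of starting points within flow-distance $\tau$ of that tower's base, so that there is exactly one bulk term; the boundary words $u,w$ then have $\vec s$-length at most $\tau$, are controlled by the already-proved upper bound on short words, and the whole argument hinges on the choice $\tau=c_2\exp\bigl(n\lam_\ba\frac{\chi^+-\eps}{\chi^++\eps}\bigr)$, which balances negligibility of the boundary against the measure $\tau\,\mu(\zeta^{[n]}[b])\ge\tau e^{-(\lam_\ba+\eps)n}$ of the flow box. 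Relatedly, your ``two boundary pieces of length $\le h_n$'' are \emph{not} subexponential errors in the upper-bound step either --- they are of full exponential order and must themselves be decomposed recursively through all levels $j=0,\dots,n$ (the telescoping of \eqref{eq-accord} and Lemma~\ref{lem-newup}), each level contributing $\|\Sf_{j+1}\|_1\cdot\|\Cc_\ba(\xi,j)\vec z\|$ with (A3) making the extra factor subexponential.

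Second, the ``harder direction'' of your Fej\'er step --- that a lower bound on $G_T(f,\om)$ yields a matching lower bound on $\sig_f(B(\om,cT^{-1}))$ --- is false as a direct implication, because $K_T$ has heavy tails and $G_T(f,\om)$ can be large due to spectral mass at distance $\gg 1/T$ from $\om$. The correct mechanism (Lemma~\ref{lem-specest} in the paper) is a contradiction argument: one \emph{assumes} the polynomial upper bound $\sig_f(B_r(\om))\le r^{2-\gam+\wt\eps}$ at all small scales, uses it to dominate the tail of the kernel, deduces $G_R(f,\om)\le C'R^{\gam-\wt\eps-1}\cdot R$ for all large $R$, and contradicts this with the flow-box lower bound along the subsequence $\Nk$. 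Your plan is repairable precisely by inserting these two devices, but as written both the $\asymp$ and the direct ball lower bound are gaps.
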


Notice that if ${\chi}^+_{\ba,\xi,\vec{z}}> 0$, then also $\lam_\ba \ge {\chi}^+_{\ba,\xi,\vec{z}}> 0$, so the formula in (\ref{ki2}) is well-defined.

\smallskip

As a special case, we can consider a single primitive aperiodic substitution $\zeta$, and Theorem~\ref{th-main10} applies to suspension flows over the classical substitution dynamical
system $(X_\zeta,T)$, see Section~\ref{subsec-sub} for more details.

The next two corollaries are very similar to the ones from the previous section, but since here the setting is an individual $S$-adic system, we state them  explicitly.
Recall that simple cylindrical functions have the form $f(x,t) = \sum_{a\in \Ak} b_a \One_{[a]}(x)$, that is,
$\psi_a(\om) = b_a\One_{[0,s_a]}$. Then by (\ref{cylia}):
$$
\vec{z} = (\what{\psi}_a(\om))_{a\in \Ak} = \left(\frac{b_a(1-e^{-2\pi i \om s_a})}{2\pi i \om}\right)_{a\in \Ak}=: \Gam_\om \vec{b},\ \ \mbox{for}\ \om\ne 0,
$$
and for $\om=0$ we have $\vec{z} = \Gam_0(\vec{b})=\vec{b}$.
Let $\vec{e}_j$ be the $j$-th unit coordinate vector in $\C^m$.

\begin{corollary} \label{cor-spec00} Let $\ba=(\zeta_j)_{j\ge 1}\in \FrA^\N$ be a sequence of substitutions 
satisfying the conditions {\bf (A1$'$)}, {\bf (A2)}, and {\bf (A3)}, such that the $S$-adic shift $(X_\ba,T)$ is aperiodic and $\ba$ is recognizable.
Let $\vec{s}\in \R_+^m$ and consider the suspension flow $(\Xx_\ba^{\vec{s}},h_t,\wt \mu)$ over the uniquely ergodic system $(X_\ba,T,\mu)$. 

{\rm (i)} Let $\om\in \R\setminus \bigcup_{a\in\Ak} (s_a^{-1}\cdot \Z)$.  For any $\vec{b}\in \C^m$ there exists $j\le m$ such that
$$
\card\Bigl\{c\in \C:\ \chi^+_{\ba,\om\vec{s},\, \Gam_\om(\vec{b} + c\vec{e}_j)}< \chi^+_{\ba,\om\vec{s}}\Bigr\} \le 1,
$$
and hence for all perturbations of $\vec{b}$ along the $j$-th direction, except possibly one, the simple cylindrical function $f$ corresponding to it, satisfies
$$
\und{d}(\sig_f,\om) = 2 - \frac{2\chi^+_{\ba,\om\vec{s}}}{\lam_\ba}\,, \  \mbox{assuming}\  \chi^+_{\ba,\om\vec{s}}>0.
$$

{\rm (ii)} For Lebesgue-a.e.\ $\vec{b}\in \C^m$, the simple cylindrical function $f$ corresponding to $\vec{b}$ satisfies
$$
\und{d}(\sig_f,\om) = 2 - \frac{2\chi^+_{\ba,\om\vec{s}}}{\lam_\ba}\ \ \mbox{for Lebesgue-a.e.}\ \om\in \R, \ \mbox{such that}\  \chi^+_{\ba,\om\vec{s}}>0.
$$
\end{corollary}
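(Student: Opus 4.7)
The plan is to deduce the corollary directly from Theorem~\ref{th-main10} by analyzing the structure of the set of ``bad'' vectors on which the pointwise Lyapunov exponent drops below its supremum. For fixed $\ba\in\Om_+$ and $\xi\in\T^m$, set
$$
V_{\ba,\xi} := \{\vec{z}\in \C^m :\ \chi^+_{\ba,\xi,\vec{z}} < \chi^+_{\ba,\xi}\}.
$$
The first step is to observe that $V_{\ba,\xi}$ is a linear subspace of $\C^m$: for any $c\ne 0$ we have $\chi^+_{\ba,\xi,c\vec{z}} = \chi^+_{\ba,\xi,\vec{z}}$, and the triangle inequality $\|\Cc_\ba(\xi,n)(\vec{z}_1+\vec{z}_2)\| \le 2\max_i \|\Cc_\ba(\xi,n)\vec{z}_i\|$ gives, upon taking $\frac{1}{n}\log$ and $\limsup$, the sub-additive estimate $\chi^+_{\ba,\xi,\vec{z}_1+\vec{z}_2}\le \max_i\chi^+_{\ba,\xi,\vec{z}_i}$, so the strict inequality is preserved under addition.

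Next I would check that $V_{\ba,\xi}$ is a \emph{proper} subspace. Using the elementary equivalence between a matrix norm and the maximum column norm on a finite-dimensional space, together with the fact that for a finite family of sequences the $\limsup$ of the pointwise maximum equals the maximum of the $\limsup$'s, one obtains $\chi^+_{\ba,\xi} = \max_{j\le m}\chi^+_{\ba,\xi,\vec{e}_j}$. Hence at least one coordinate vector $\vec{e}_j$ lies outside $V_{\ba,\xi}$, which proves properness.

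For part (i), fix $\om\in \R\setminus\bigcup_a(s_a^{-1}\Z)$, so that $\Gam_\om$ is a diagonal invertible matrix. Then $\{\Gam_\om\vec{e}_j\}_{j=1}^m$ spans $\C^m$, so at least one $j$ satisfies $\Gam_\om\vec{e}_j\notin V_{\ba,\om\vec{s}}$. For this $j$, the affine complex line $\{\Gam_\om(\vec{b}+c\vec{e}_j) : c\in\C\}$ meets the proper subspace $V_{\ba,\om\vec{s}}$ in at most one point, by linearity. For every other $c$, Theorem~\ref{th-main10} applies with $\vec{z}=\Gam_\om(\vec{b}+c\vec{e}_j)$ and $\chi^+_{\ba,\om\vec{s},\vec{z}}=\chi^+_{\ba,\om\vec{s}}$, yielding the required formula. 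For part (ii), I would argue by Fubini: for each $\om$ outside the countable exceptional set, the set of ``bad'' vectors $\{\vec{b}\in\C^m : \Gam_\om\vec{b}\in V_{\ba,\om\vec{s}}\}=\Gam_\om^{-1}V_{\ba,\om\vec{s}}$ is a proper linear subspace of $\C^m\simeq \R^{2m}$ and therefore has Lebesgue measure zero. The joint set is measurable since $(\vec{b},\om)\mapsto \chi^+_{\ba,\om\vec{s},\Gam_\om\vec{b}}$ and $(\vec{b},\om)\mapsto \chi^+_{\ba,\om\vec{s}}$ are both $\limsup$'s of jointly measurable functions; Fubini then transfers the full-measure statement to almost every $\vec{b}$, and Theorem~\ref{th-main10} supplies the dimension formula.

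The main (minor) obstacle I anticipate is the linear-subspace structure of $V_{\ba,\xi}$, which is the pivotal input for both parts: while the sub-additivity is standard, one must be slightly careful that it is the \emph{strict} inequality $\chi^+_{\ba,\xi,\vec{z}_i}<\chi^+_{\ba,\xi}$ that passes to sums, and that the supremum $\chi^+_{\ba,\xi}$ is actually attained on some coordinate axis (which requires the finite-max-of-$\limsup$ identity mentioned above). The remaining measurability check for Fubini is routine since the cocycle $\Cc_\ba(\xi,n)$ is a continuous function of $(\ba,\xi)$ for each fixed $n$.
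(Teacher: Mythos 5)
Your proposal is correct and follows essentially the same route as the paper: the paper's proof of (i) rests on the elementary fact that some coordinate vector $\vec{e}_j$ realizes $\limsup_n n^{-1}\log\|A_n\|$, and its proof of (ii) is the same measurability-plus-Fubini argument. You merely make explicit the linear-subspace structure of the exceptional set $V_{\ba,\xi}$ (via sub-additivity of the pointwise exponent and invertibility of $\Gam_\om$ for $\om\notin\bigcup_a s_a^{-1}\Z$), which the paper leaves implicit; this is a faithful filling-in of the same argument rather than a different one.
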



\begin{corollary} \label{cor-spec10} Let $\ba=(\zeta_j)_{j\ge 1} \in \FrA^\N$ be a sequence of substitutions 
satisfying the conditions {\bf (A1$'$)}, {\bf (A2)}, and {\bf (A3)}, such that the $S$-adic shift $(X_\ba,T)$ is aperiodic and $\ba$ is recognizable.
Let $\vec{s}\in \R_+^m$.
Then

{\rm (i)} we have $\chi_{\ba,\om\vec{s}}^+ 
\le  \half\lam_\ba$ for Lebesgue-a.e.\ $\om\in \R$, but $\chi_{\ba,\om\vec{s}}^+ 
\ge  \half\lam_\ba$ for $\sig_f$-a.e.\ $\om\in \R$.

{\rm (ii)}
if $\chi_{\ba,\om\vec{s}}^+ <  \half\lam_\ba$ for Lebesgue-a.e.\ $\om\in \R$, 
then for any cylindrical function $f$ on $\Xxi^{\ba}$, the spectral measure $\sig_f$ is purely singular;

{\rm (iii)}
if $\chi_{\ba,\om\vec{s}}^+ <  \half\lam_\ba$ for Lebesgue-a.e.\ $\om\in \R$, and moreover, $\om\mapsto \det\bigl(\Cc_\ba(\om \vec{s},\ell)\bigr)$ is not constant zero
for all $\ell$,
then the flow $(\Xx_\ba^{\vec{s}}, {\widetilde \mu}, h_t)$ has purely singular spectrum.
\end{corollary}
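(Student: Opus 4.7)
The plan is to deduce the three parts from Theorem \ref{th-main10} (applied also to shifted sequences in part (iii)), together with two auxiliary inputs: the general Lebesgue-differentiation fact that any finite Borel measure $\sig$ on $\R$ satisfies $\underline{d}(\sig,\om)\le 1$ for $\sig$-a.e.\ $\om$, and an $L^2$ estimate on $\|\Cc_\ba(\om\vec{s},n)\vec{z}\|$ as a function of $\om$. For the $\sig_f$-a.e.\ direction in (i): at $\sig_f$-a.e.\ $\om$ the symmetric derivative $D\sig_f(\om)$ is either in $(0,\infty)$ (so $\sig_f(B_r(\om))\ge c r$ for small $r$) or equal to $+\infty$, and in both cases $\underline{d}(\sig_f,\om)\le 1$. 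The dichotomy in Theorem \ref{th-main10} then rules out ${\chi}^+_{\ba,\om\vec{s},\vec{z}}\le 0$ and forces $2-2{\chi}^+_{\ba,\om\vec{s},\vec{z}}/\lam_\ba\le 1$, so ${\chi}^+_{\ba,\om\vec{s}}\ge{\chi}^+_{\ba,\om\vec{s},\vec{z}}\ge\half\lam_\ba$ on a $\sig_f$-full set.

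For the Lebesgue-a.e.\ direction, from (\ref{coc0}) each component of $\Cc_\ba(\om\vec{s},n)\vec{z}$ is a trigonometric sum $\sum_{j=1}^N c_j e^{-2\pi i\om t_j}$ in $\om$ with $N\le m\|\Sf^{[n]}\|_1$ terms and frequencies $t_j$ that are partial sums of the $s_i$'s, hence $\min_i s_i$-separated. Expanding the square and integrating over $\om\in[0,T]$, the diagonal contributes $T\sum|c_j|^2$ and the off-diagonal is bounded by $\sum_{j\ne j'}|c_j||c_{j'}|/|t_j-t_{j'}|$; this yields
$$
\int_0^T\|\Cc_\ba(\om\vec{s},n)\vec{z}\|^2\,d\om\le C_{T,\vec{s}}\bigl(1+\log\|\Sf^{[n]}\|_1\bigr)\|\Sf^{[n]}\|_1\|\vec{z}\|_\infty^2.
$$
Chebyshev applied to $\{\om\in[0,T]:\|\Cc_\ba(\om\vec{s},n)\vec{z}\|^2\ge e^{n(\lam_\ba+\eps)}\}$ gives a bound summable in $n$, so Borel--Cantelli yields ${\chi}^+_{\ba,\om\vec{s},\vec{z}}\le\half(\lam_\ba+\eps)$ Lebesgue-a.e.; sending $\eps\downarrow 0$ and letting $\vec{z}$ range over the coordinate basis gives ${\chi}^+_{\ba,\om\vec{s}}\le\half\lam_\ba$ Lebesgue-a.e. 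Part (ii) is then immediate: if $\sig_f$ had a nontrivial absolutely continuous part, $d\sig_f/d\Leb>0$ would hold on a set of positive Lebesgue measure, forcing $\underline{d}(\sig_f,\om)\le 1$ there; but the hypothesis ${\chi}^+_{\ba,\om\vec{s}}<\half\lam_\ba$ Lebesgue-a.e., inserted into Theorem \ref{th-main10}, gives $\underline{d}(\sig_f,\om)>1$ Lebesgue-a.e., a contradiction.

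For (iii) I would bootstrap (ii) from level $1$ to arbitrary level $\ell$ and then invoke density. Iterating the multiplicativity identity (\ref{most}) gives the telescoping relation
$$
\Cc_{\sig^\ell\ba}(\om\vec{s}^{(\ell)},n)=\Cc_\ba(\om\vec{s},n+\ell)\cdot\Cc_\ba(\om\vec{s},\ell)^{-1},
$$
valid wherever $\det\Cc_\ba(\om\vec{s},\ell)\ne 0$. Since $\om\mapsto\det\Cc_\ba(\om\vec{s},\ell)$ is a finite sum of exponentials in $\om$, hence entire, and by hypothesis not identically zero, its zero set is discrete in $\R$ and thus Lebesgue-null. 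Combined with $\lam_{\sig^\ell\ba}=\lam_\ba$ (inherited from (A1$'$)--(A2)--(A3), which pass to shifted sequences), this yields ${\chi}^+_{\sig^\ell\ba,\om\vec{s}^{(\ell)}}\le{\chi}^+_{\ba,\om\vec{s}}<\half\lam_\ba$ Lebesgue-a.e. Applying part (ii) to the shifted $S$-adic system $(X_{\sig^\ell\ba},T)$ with roof $\vec{s}^{(\ell)}$ and transferring via the measure-preserving flow isomorphism $\Xx_\ba^{\vec{s}}\cong\Xx_{\sig^\ell\ba}^{\vec{s}^{(\ell)}}$ provided by recognizability, we conclude that every level-$\ell$ Lip-cylindrical function on $\Xx_\ba^{\vec{s}}$ has purely singular spectral measure. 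Because $\ba$ is recognizable, the level-$\ell$ cylinder partitions, $\ell\ge 1$, generate the Borel $\sig$-algebra of $\Xx_\ba^{\vec{s}}$, so $\bigcup_{\ell\ge 1}$(level-$\ell$ Lip-cylindrical functions) is dense in $L^2(\Xx_\ba^{\vec{s}})$; since the set of $f$ with singular $\sig_f$ is a closed linear subspace of $L^2$, it coincides with $L^2$, and the flow has purely singular spectrum.

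The principal technical obstacle is the $L^2$ bound in part (i): the off-diagonal integrals $\int_0^T e^{-2\pi i\om(t_j-t_{j'})}\,d\om=O(1/|t_j-t_{j'}|)$ rely on the separation $|t_j-t_{j'}|\ge\min_i s_i\cdot|j-j'|$ and contribute a $\log\|\Sf^{[n]}\|_1$ factor that is subexponential in $n$ and hence harmless at the Borel--Cantelli step. A secondary but nontrivial point in (iii) is justifying the density of level-$\ell$ Lip-cylindrical functions, which uses recognizability to ensure that the level-$\ell$ cylinder partitions indeed refine to the Borel $\sig$-algebra as $\ell\to\infty$.
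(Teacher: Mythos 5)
Your proposal is correct, and parts (ii), (iii) and the $\sig_f$-a.e.\ half of (i) follow essentially the same route as the paper: Theorem~\ref{th-main10} (resp.\ Theorem~\ref{th-main1110} after telescoping past the countable zero set of the trigonometric polynomial $\det\Cc_\ba(\om\vec s,\ell)$) gives $\und d(\sig_f,\om)>1$ Lebesgue-a.e., which is incompatible with a nontrivial absolutely continuous part by Lebesgue differentiation; the $\sig_f$-a.e.\ lower bound comes from $\und d(\sig_f,\om)\le 1$ at $\sig_f$-a.e.\ point (you phrase this via symmetric derivatives, the paper via $\dim_H(\sig_f)\le 1$ — same substance); and the passage from level-$\ell$ cylindrical functions to the whole flow uses density of these functions in $L^2$, which is the paper's Lemma~\ref{lem-approx} and Corollary~\ref{cor-approx.spec}.

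Where you genuinely diverge is the Lebesgue-a.e.\ upper bound $\chi^+_{\ba,\om\vec s}\le\half\lam_\ba$ in (i). The paper deduces it from the dimension formula itself: by Corollary~\ref{cor-spec00}(ii) a suitable simple cylindrical function would have $\und d(\sig_f,\om)<1$ on a set $A$ of positive Lebesgue measure if $\chi^+_{\ba,\om\vec s}>\half\lam_\ba$ there, contradicting $\limsup_{r\to0}\sig_f(B_r(\om))/2r<\infty$ Lebesgue-a.e.\ for any finite measure. You instead prove it directly on the cocycle: the entries of $\Cc_\ba(\om\vec s,n)$ are exponential sums with $s_{\min}$-separated frequencies, so $\int_0^T\|\Cc_\ba(\om\vec s,n)\vec z\|^2\,d\om\le C_{T,\vec s}\,\|\Sf^{[n]}\|_1(1+\log\|\Sf^{[n]}\|_1)\|\vec z\|_\infty^2$, and Chebyshev plus Borel--Cantelli (with (A2) controlling $\|\Sf^{[n]}\|_1$) gives the bound for each coordinate vector $\vec z=\vec e_j$, hence for the norm. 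This computation is sound — the off-diagonal terms contribute only an $O(N\log N)$ correction — and it is in fact the ``direct'' argument that the paper attributes to Baake--G\"ahler--Ma\~nibo in the self-similar case; its advantage is that it bypasses the spectral measure and the Fubini argument over $\vec b$ in Corollary~\ref{cor-spec00}(ii), at the cost of an extra harmonic-analysis estimate. One small point worth making explicit in (iii): the application of Theorem~\ref{th-main1110} requires (A2-$\ell$) for the shifted sequence, which you assert ``passes to shifted sequences''; since the substitution matrices need not be invertible here, one should note that $\|\Sf^{[\ell]}B\|_1\ge\|B\|_1$ for any non-negative matrix $B$ (every column sum of $\Sf^{[\ell]}$ is $\ge1$), which together with submultiplicativity gives $\lim_n n^{-1}\log\|\Sf_{\ell+1}\cdots\Sf_{\ell+n}\|=\lam_\ba$; the paper is equally terse on this point.
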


Corollaries \ref{cor-spec00} and \ref{cor-spec10} will be proven in Section~\ref{sec-add}.
A sufficient condition for $\det\bigl(\Cc_\ba(\om \vec{s},\ell)\bigr)\not\equiv 0$ is
$\det(\Sf^{[\ell]})\ne 0$, since then $\det\bigl(\Cc_\ba(\om \vec{s},\ell)\bigr)$ is a non-trivial trigonometric polynomial.


\subsection{Recognizability and higher-level cylindrical functions} As already mentioned, we need higher-order cylindrical functions to describe the spectral type of the flow.
Recognizability of $\ba$  implies that
\be \label{KR}
\Pk_n = \{T^i(\zeta^{[n]}[a]):\ a\in \Ak,\ 0 \le i < |\zeta^{[n]}(a)|\}
\ee
is a sequence of Kakutani-Rokhlin partitions for $n\ge n_0(\ba)$, which generates the Borel $\sig$-algebra on the space $X_\ba$. We emphasize that, in general, $\zeta^{[n]}[a]$ may be a proper subset of $[\zeta^{[n]}(a)]$.

Using the uniqueness of the representation (\ref{recog}) and the Kakutani-Rokhlin partitions (\ref{KR}), we obtain for $n\ge n_0$:
$$
\mu([a]) = \sum_{b\in \Ak}\Sf^{[n]}(a,b)\,\mu(\zeta^{[n]}[b]),\ \ a\in \Ak,
$$
hence
\be \label{eq-measure}
\vec{\mu}_0 = \Sf^{[n]}\vec{\mu}_n,\ \ \mbox{where}\ \ \vec{\mu}_n = \bigl(\mu(\zeta^{[n]}[b])\bigr)_{b\in \Ak}
\ee
is a column-vector. Similarly to (\ref{eq-measure}), we have that 
\be \label{eq-measure2}
\vec{\mu}_n = \Sf_{n+1}\vec{\mu}_{n+1},\ \  n\ge n_0. 
\ee

Let $\ell\in \N$.
It follows from recognizability that the suspension flow $(\Xx_{\ba}^{\vec{s}},\wt{\mu},h_t)$ is measurably isomorphic to the suspension flow over the system $(\zeta^{[\ell]}(X_{\sig^\ell\ba}), \zeta^{[\ell]}\circ T\circ (\zeta^{[\ell]})^{-1})$, with the induced measure, and a piecewise-constant roof function given by the vector
$$
\vec{s}^{\,(\ell)}= (s^{(\ell)}_a)_{a\in \Ak}:= (\Sf^{[\ell]})^t \vec{s}.
$$
(If we think about the suspension flow as a tiling system, this procedure corresponds to considering ``supertiles'' of order $\ell$.)
We have a union, disjoint in measure:
$$
\Xx_\ba^{\vec{s}} = \bigcup_{a\in \Ak} \zeta^{[\ell]}[a]\times [0, s_a^{(\ell)}].
$$
Note that this works correctly in terms of total measure of the space: by (\ref{eq-measure}) we have 
$$
\wt \mu(\Xx_\ba^{\vec{s}}) = \langle \vec{\mu}_0,\vec{s} \rangle = \langle \Sf^{[\ell]}\vec{\mu}_\ell, \vec{s}\rangle = \langle \vec{\mu}_\ell, \vec{s}^{(\ell)}\rangle.
$$
Correspondingly,  define $f$ to be a Lip-cylindrical function of level $\ell$ if 
$$
f(x,t) = \psi_a^{(\ell)}(t),\ \ x\in \zeta^{[\ell]}[a],\ 0 \le t < s^{(\ell)}_a,
$$
for some $\psi_a\in \Lip[0,s^{(\ell)}_a], \ a\le m$.
The union of cylindrical functions of all levels is dense in $L^2(\Xx_\ba^{\vec{s}})$, see Lemma~\ref{lem-approx} below, hence the maximal spectral type of the flow may be expressed in terms of their spectral measures.

For a generalization of Theorem~\ref{th-main10} to the case of cylindrical function of level $\ell$ we need a version of condition (A2):

\medskip

{\bf (A2-$\ell$)} $$\lim_{n\to \infty} \frac{1}{n} \log\|\Sf_{\ell+1}\cdot \ldots\cdot \Sf_{\ell+n}\| = \lam_\ba.$$

\medskip

Observe that condition (A2-$\ell$) follows from (A2) if the substitution matrices are invertible, in view of the inequalities
$$
\|\Sf_1\cdot\ldots \cdot\Sf_{\ell+n}\|\cdot \|\Sf_1\cdot\ldots\cdot \Sf_\ell\|^{-1} \le \|\Sf_{\ell+1}\cdot \ldots\cdot \Sf_{\ell+n}\| \le \|(\Sf_1\cdot\ldots \cdot\Sf_\ell)^{-1}\|\cdot \|\Sf_1\cdot\ldots\cdot \Sf_{\ell+n}\|.
$$

\begin{theorem} \label{th-main1110}
Let $\ba=\zeta_1\,\zeta_2,\ldots$ be a sequence of substitutions defined on
$\Ak = \{1,\ldots,m\}$, 
satisfying the conditions {\bf (A1$'$)}, {\bf (A2-$\ell$)}, and {\bf (A3)}, such that the $S$-adic shift $(X_\ba,T)$ is aperiodic and $\ba$ is recognizable.
Let $\vec{s}\in \R_+^m$ and consider the suspension flow $(\Xx_\ba^{\vec{s}},h_t,\wt \mu)$ over the $S$-adic system $(X_\ba,T,\mu)$.   For $\ell\in \N$ let $f(x,t)=\sum_{j\in \Ak} \One_{\zeta^{[j]}[a]}(x) \cdot \psi^{(\ell)}_j(t)$ be a Lip-cylindrical function of level $\ell$ and $\sig_f$ the corresponding spectral measure.
 Fix $\om\in \R$ and let $\xi\in \T^m$ be such that 
$$
\xi = \om \vec{s}^{(\ell)}\ (\mbox{\rm mod}\ \Z^m).
$$
Further, define
$$
\vec{z} = (\what{\psi}^{(\ell)}_j(\om))_{j\in \Ak}.
$$
Suppose that  ${\chi}^+_{\ba,\xi,\vec{z}}> 0$.
Then
 \be \label{ki20}
 \underline{d}(\sig_f,\om) = 2 - \frac{2{\chi}^+_{\ba,\xi,\vec{z}}}{\lam_\ba}\,.
 \ee
 If ${\chi}^+_{\ba,\xi,\vec{z}}\le 0$, then
\be \label{kia0}
\underline{d}(\sig_f,\om) \ge 2.
\ee
 \end{theorem}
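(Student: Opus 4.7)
The plan is to reduce Theorem~\ref{th-main1110} to Theorem~\ref{th-main10} via recognizability. By Definition~\ref{def-recog} and the material around (\ref{KR})--(\ref{eq-measure2}), for $n\ge n_0(\ba)$ every $x\in X_\ba$ admits a unique desubstitution $x=T^k\bigl(\zeta^{[\ell]}(x')\bigr)$ with $x'\in X_{\sig^\ell\ba}$ and $0\le k<|\zeta^{[\ell]}(x'_0)|$. This induces a measurable isomorphism of measure-preserving flows between $(\Xx_\ba^{\vec s},h_t,\wt\mu)$ and the suspension flow over $(X_{\sig^\ell\ba},T,\mu_\ell)$ with piecewise-constant roof $\vec s^{\,(\ell)}=(\Sf^{[\ell]})^t\vec s$; total mass is preserved via $\langle\vec\mu_0,\vec s\rangle=\langle\vec\mu_\ell,\vec s^{\,(\ell)}\rangle$. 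Under this isomorphism, the Lip-cylindrical function of level $\ell$ in (\ref{fcyl3}) becomes a genuine level-$0$ Lip-cylindrical function $\widetilde f(x',t)=\sum_{a\in\Ak}\One_{[a]}(x')\,\psi^{(\ell)}_a(t)$ on the new suspension, with $\sig_f=\sig_{\widetilde f}$.

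The hypotheses of Theorem~\ref{th-main10} now transfer from $\ba$ to $\sig^\ell\ba$ by routine checks: the good word $\bq$ of (A1$'$) still appears along $\sig^\ell\ba$ with the required regularity (only finitely many appearances are lost), aperiodicity and recognizability are tail properties of $\ba$, and (A3) is an asymptotic condition on the individual $\|\Sf_n\|$. Assumption (A2-$\ell$) is precisely the statement $\lam_{\sig^\ell\ba}=\lam_\ba$, so the denominator in the dimension formula agrees with~(\ref{ki20}).

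Applying Theorem~\ref{th-main10} to $(\sig^\ell\ba,\vec s^{\,(\ell)},\widetilde f)$ at the spectral parameter $\om$, with $\xi=\om\vec s^{\,(\ell)}\ (\mbox{mod}\ \Z^m)$ and $\vec z=(\what\psi^{(\ell)}_a(\om))_{a\in\Ak}$, then yields the formulas (\ref{ki20})--(\ref{kia0}). The Lyapunov exponent furnished by the reduction is that of the shifted cocycle, and its identification with $\chi^+_{\ba,\xi,\vec z}$ as appearing in (\ref{ki20}) is supplied by the composition identity (\ref{most}),
\begin{equation*}
\Cc_\ba(\xi,\ell+n) \;=\; \Cc_{\sig^\ell\ba}\bigl(\Sf^t_{\zeta^{[\ell]}}\xi,\,n\bigr)\cdot \Cc_{\zeta^{[\ell]}}(\xi),
\end{equation*}
together with (A2-$\ell$): the bounded factor $\Cc_{\zeta^{[\ell]}}(\xi)$ contributes only $O(1)$ to $\log\|\Cc_\ba(\xi,\ell+n)\vec z\|$ and thus disappears in the $\frac{1}{n}$-normalization, while the $\Ga$-orbit structure of the skew product accounts for the shift in base point.

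The principal technical point is the careful setup of the isomorphism in the first paragraph: verifying that $\vec s^{\,(\ell)}$, the measure $\mu_\ell$ from (\ref{eq-measure})--(\ref{eq-measure2}), and the Kakutani-Rokhlin partitions $\Pk_n$ for $n\ge\ell$ genuinely intertwine the flow $h_t$ and preserve $\sig_f$, and that the Fourier coefficients $\what\psi^{(\ell)}_a(\om)$ feed into the reduced problem exactly as the vector $\vec z$ prescribed by Theorem~\ref{th-main10}. Once this measure-theoretic bookkeeping is in place, the remainder of the proof is a routine transfer of hypotheses along the finite shift by $\ell$.
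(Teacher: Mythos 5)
Your reduction is essentially the paper's own argument: the authors likewise use recognizability to pass to the suspension over the ``supertile'' system with roof $\vec s^{\,(\ell)}$, observe that the twisted integral from a point $\bigl(\zeta^{[\ell]}(x'),0\bigr)$ equals $\Phi_\fu^{\vec s^{\,(\ell)}}(x'[0,N-1],\om)$ with $\fu(a)=\what\psi^{(\ell)}_a(\om)$ (the analogue of (\ref{SR1})), and then rerun the level-$0$ proof over $\sig^\ell\ba$; your transfers of (A1$'$), (A3), aperiodicity and recognizability, and the reading of (A2-$\ell$) as ``(A2) for $\sig^\ell\ba$ with the same value $\lam_\ba$'' are all correct.

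The one step that fails as written is the final identification of the exponent. Your reduction produces $\chi^+_{\sig^\ell\ba,\,\om\vec s^{\,(\ell)},\,\vec z}$, the exponent of the cocycle built from $\zeta_{\ell+1},\zeta_{\ell+2},\dots$ started at $\om\vec s^{\,(\ell)}$. The displayed identity $\Cc_\ba(\xi,\ell+n)=\Cc_{\sig^\ell\ba}\bigl(\Sf^t_{\zeta^{[\ell]}}\xi,n\bigr)\Cc_{\zeta^{[\ell]}}(\xi)$ is true, but evaluating it at $\xi=\om\vec s^{\,(\ell)}$ places the shifted cocycle at the point $\Sf^t_{\zeta^{[\ell]}}\om\vec s^{\,(\ell)}=\om\bigl((\Sf^{[\ell]})^t\bigr)^2\vec s$, not at $\om\vec s^{\,(\ell)}$, so it does not link your exponent to $\Cc_\ba(\om\vec s^{\,(\ell)},\cdot)$. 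The correct bridge takes $\xi=\om\vec s$, giving $\Cc_{\sig^\ell\ba}(\om\vec s^{\,(\ell)},n)=\Cc_\ba(\om\vec s,\ell+n)\,\Cc_\ba(\om\vec s,\ell)^{-1}$, which requires invertibility of the $\ell$-block and in any case relates your exponent to a directional exponent at the \emph{modified} vector $\Cc_\ba(\om\vec s,\ell)^{-1}\vec z$; the ``bounded factor contributes $O(1)$'' argument is invalid for directional exponents, since a fixed right factor changes the vector and $\chi^+_{\ba,\xi,B\vec z}$ need not equal $\chi^+_{\ba,\xi,\vec z}$. Fortunately this identification is unnecessary: read consistently with (A2-$\ell$) (which governs $\Sf_{\ell+1}\cdots\Sf_{\ell+n}$) and with the authors' own manipulation in the proof of Corollary~\ref{cor-spec10}(iii), the quantity $\chi^+_{\ba,\xi,\vec z}$ in the statement \emph{is} the shifted-cocycle exponent that your reduction yields directly, so the last paragraph should simply be replaced by that remark.
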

 
 This theorem has corollaries that are exact analogues of Corollaries~\ref{cor-spec00} and \ref{cor-spec10} for the higher-level cylindrical functions.

  
 \subsection{Case of a single substitution} \label{subsec-sub}
As a special case of an $S$-adic system, we can consider $\ba=(\zeta_n)_{n\ge 1}$, where $\zeta_n\equiv\zeta$, a fixed primitive aperiodic substitution on $\Ak$. 
Thus we obtain results on the spectral measures of substitution $\R$-actions as corollaries. We note that suspension flows in this case have been studied as tiling dynamical systems on the line,
with interval prototiles of length $s_a$, $a\in \Ak$, see \cite{BR,CS,SolTil}.

Assuming $\det (\Sf_\zeta) \ne 0$, consider the toral endomorphism 
\begin{equation}\label{torend}
E_\zeta: \xi \mapsto \Sf_\zeta^t \xi \  (\mathrm{mod} \   \Z^m),
\end{equation} 
 which preserves the Haar measure. Then the matrix-function from Definition~\ref{def-cocycle} becomes
\be \label{cocycle3}
\Cc_\zeta(\xi,n):= \Cc_\zeta\bigl((E_\zeta)^{n-1}\xi \bigr)\cdot \ldots \cdot \Cc_\zeta(\xi) = \Cc_{\zeta^n}(\xi)
\ee
and forms a complex matrix cocycle over the endomorphism \eqref{torend}. All the conditions (A1$'$), (A2), (A3) clearly hold, so we obtain Theorems~\ref{th-main10}, \ref{th-main1110}, and the corollaries, specialized to the case of a single substitution. Observe that now
$$
\lam_\ba = \log\theta_1,
$$
where $\theta_1$ is the Perron-Frobenius eigenvalue of the substitution matrix. 
In the case of a single substitution we denote the pointwise upper Lyapunov exponents by $\chi^+_{\zeta,\xi}$.

We note that in \cite[Prop.\,7.2]{BuSo1} a lower bound for the local dimension of $\sig_f$ is obtained for substitution $\Z$-actions, which is similar in spirit to the lower bound in (\ref{ki2}) in 
the single substitution case.

Now suppose that $\Sf_\zeta$ has no eigenvalues that are roots of unity. Then the  endomorphism $E_\zeta$ of the torus $\T^m$ induced by $\Sf_\zeta^t$ is ergodic, and
Furstenberg-Kesten Theorem \cite{FK} yields that the top Lyapunov exponent exists and is constant almost everywhere:
$$
\exists\, \chi(\Cc_\zeta) = \lim_{n\to\infty} \frac{1}{n}\log\|\Cc_\zeta(\xi,n)\|\ \ \ \mbox{for a.e.}\ \xi\in \T^m.
$$

\begin{corollary} \label{cor-ac}
Suppose that the toral endomorphism $E_\zeta$ is ergodic. Then

{\rm (i)} the top Lyapunov exponent of the spectral cocycle satisfies
$
\chi(\Cc_\zeta)  \le \half\log\theta_1;
$

{\rm (ii)} if $\chi(\Cc_\zeta) < \half\log\theta_1$, 
then  for a.e.\ $\vec{s}\in \R^m_+$, the flow $(\Xxi^{\vec{s}}, {\widetilde \mu}, h_t)$ has purely singular spectrum.
\end{corollary}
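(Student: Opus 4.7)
My plan is to derive all three parts from the $S$-adic counterpart, Corollary \ref{cor-spec10}, applied to the constant sequence $\ba=(\zeta,\zeta,\ldots)$, combined with the Furstenberg--Kesten theorem for the ergodic toral endomorphism $E_\zeta$. In this situation $\lam_\ba=\log\theta_1$, and Furstenberg--Kesten produces a Haar-null set $N\subset\T^m$ off of which $\chi^+_{\zeta,\xi}$ is a genuine limit equal to the constant $\chi(\Cc_\zeta)$.

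The main technical bridge I need is a Fubini-type passage from ``Haar-a.e.\ $\xi\in\T^m$'' to ``for Lebesgue-a.e.\ $\vec s\in\R^m_+$, Lebesgue-a.e.\ $\om\in\R$ along the curve $\om\vec s$''. For this I consider the smooth map $\Phi:\R^m_+\times\R\to\T^m$, $\Phi(\vec s,\om)=\om\vec s\ (\mathrm{mod}\ \Z^m)$, whose differential has rank $m$ wherever $\om\ne 0$ (the $\vec s$-partial derivatives give $\om\,I_m$). Hence $\Phi_{\ast}(\mathrm{Leb})$ is absolutely continuous on $\T^m$, $\Phi^{-1}(N)$ has $(m+1)$-dimensional Lebesgue measure zero, and the ordinary Fubini theorem yields that for Lebesgue-a.e.\ $\vec s$ the section $\{\om:\om\vec s\bmod\Z^m\in N\}$ has one-dimensional Lebesgue measure zero; equivalently, $\chi^+_{\zeta,\om\vec s}=\chi(\Cc_\zeta)$ for Lebesgue-a.e.\ $\om$. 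I expect this transversality step to be the subtlest part of the argument: a priori the one-dimensional curve $\{\om\vec s\}$ could miss any prescribed full-measure set in $\T^m$, and it is the non-degeneracy of $\Phi$ that rules this out.

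Once this bridge is in place, part (i) is immediate: Corollary \ref{cor-spec10}(i) gives $\chi^+_{\zeta,\om\vec s}\le\half\log\theta_1$ for Lebesgue-a.e.\ $\om$, and selecting any $\vec s$ satisfying both a.e.-$\om$ conclusions forces $\chi(\Cc_\zeta)\le\half\log\theta_1$. For (ii), the strict-inequality hypothesis together with the Fubini step yields $\chi^+_{\zeta,\om\vec s}<\half\log\theta_1$ for Lebesgue-a.e.\ $\om$ and Lebesgue-a.e.\ $\vec s$, whereupon Corollary \ref{cor-spec10}(ii) directly produces singularity of $\sig_f$ for every cylindrical $f$. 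For (iii) the only extra ingredient is the non-vanishing hypothesis $\om\mapsto\det\Cc_\ba(\om\vec s,\ell)\not\equiv 0$ for every $\ell$; iterating \eqref{most} gives $\det\Cc_{\zeta^\ell}(\xi)=\prod_{k=0}^{\ell-1}\det\Cc_\zeta\bigl((\Sf_\zeta^t)^k\xi\bigr)$, which is a non-trivial trigonometric polynomial on $\T^m$ whenever $\det\Cc_\zeta\not\equiv 0$. Writing its restriction to the line as $\sum_\bk c_\bk e^{2\pi i\om\,\bk\cdot\vec s}$ and invoking linear independence of distinct exponentials, identical vanishing in $\om$ would force $\bk\cdot\vec s=\bk'\cdot\vec s$ for some distinct pair of frequencies in the support, hence excludes only a finite union of hyperplanes per $\ell$; taking the countable intersection over $\ell$ still leaves a full-measure set of $\vec s$, and Corollary \ref{cor-spec10}(iii) then concludes.
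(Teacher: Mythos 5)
Your proposal is correct and follows essentially the same route as the paper, which disposes of this corollary in one line as ``immediate from Corollary~\ref{cor-spec10}'' (for the constant sequence $\ba=(\zeta,\zeta,\ldots)$) together with a Fubini argument. You simply make explicit the two steps the paper leaves implicit: the submersion/Fubini passage from the Haar-a.e.\ set on $\T^m$ where Furstenberg--Kesten applies to Lebesgue-a.e.\ $\om$ on the line $\{\om\vec s\}$ for a.e.\ $\vec s$, and the non-vanishing of $\om\mapsto\det\Cc_\zeta(\om\vec s,\ell)$ off a countable union of hyperplanes (for the latter the paper's remark after Corollary~\ref{cor-spec10} gives an even quicker route: evaluate at $\om=0$ using $\det\Sf_\zeta\ne0$).
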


This corollary is immediate from Corollary~\ref{cor-spec10}. Indeed, the condition on the determinant in part (iii) of that corollary holds automatically, since $\det(\Sf_\zeta)\ne 0$ by assumption. Note that, even in the ergodic case, Corollary~\ref{cor-spec10} is stronger, since it makes a claim about the flow under {\em every} piecewise-constant roof function, and not just almost every, as follows from Corollary~\ref{cor-ac} and Fubini's Theorem. It would be interesting to find explicit examples of purely singular spectrum using Corollaries \ref{cor-spec10} and \ref{cor-ac}.
 
\begin{remark}
 {\em (i) Corollaries~\ref{cor-spec00} and \ref{cor-spec10} provide information about dimension of spectral measures and pointwise Lyapunov exponents, as well as a sufficient condition for singularity, for a (geometric) substitution tiling on the line corresponding to the substitution $\zeta$ and the tile lengths given by an arbitrary vector $\vec s\in \R^m_+$. It is not hard to show that if $\vec s$ and $\vec s'$ belong to the same {\em stable manifold} of $E_\zeta$, that is, if
\be \label{eq-stable}
\lim_{n\to \infty} E_\zeta^n(\vec s - \vec s') = 0,
\ee
then the corresponding pointwise Lyapunov exponents are equal: $\chi^+_{\zeta,\om \vec s} = \chi^+_{\zeta,\om \vec s'}$, for any $\om \in \R$, and hence we have an equality for the dimension of spectral measures. In fact, by a theorem of Clark and Sadun \cite[Theorem 3.1]{CS} the two suspension flows $(\Xxi^{\vec{s}}, {\widetilde \mu}, h_t)$  and
$(\Xxi^{\vec{s}'}, {\widetilde \mu}, h_t)$ are topologically conjugate, provided (\ref{eq-stable}) holds.

(ii) The {\em Pisot case}, when all the eigenvalues of $\Sf_\zeta$, other than $\theta_1$, lie inside the unit circle,
has attracted much attention of the researchers. It is still an open problem, whether in the general Pisot case the spectrum is pure discrete, but it is known that there is a large discrete spectrum, dense on the line. Sadun \cite{Sadun16} called a substitution ``plastic'' if arbitrary changes of tile lengths (vector $\vec s\in \R^m_+$) result in a topologically conjugate system, up to an overall scale. Pisot substitutions are plastic by the Clark-Sadun \cite[Theorem 3.1]{CS}, and more generally, ``homological Pisot substitutions'' of \cite{BBJS}.
}
\end{remark} 
 
 \subsection{Self-similar suspension flow; comparison with the work of Baake et al.} \label{subsec-Baake}

Consider the {\em self-similar} suspension flow over a substitution, when the vector $\vec{s}$ defining the roof function is given by the Perron-Frobenius eigenvector of $\Sf_\zeta^t$. In particular, this is the case of translation flows along stable/unstable foliations for a pseudo-Anosov diffeomorphism. Then one gets
\be \label{coc2}
\Cc_\zeta(\om \vec{s},n):= \Cc_\zeta\bigl(\theta_1^{n-1}\om \vec{s}\bigr)\cdot \ldots \cdot \Cc_\zeta(\theta_1\om\vec{s})\cdot \Cc_\zeta(\om \vec{s}).
\ee

Baake et al.\ \cite{BFGR,BGM,BG2M} studied the diffraction spectrum of substitution systems in the self-similar non-constant length case, with a goal of proving that the spectrum is pure singular.
There is a  well-known connection between the diffraction and dynamical spectrum of a system, see \cite{Dworkin,Hof2,BLE}, so that the questions are closely related. 
Baake et al.\ 
work with  (\ref{coc2}) as a cocycle on $\R$ over the infinite-measure preserving action $\om\mapsto \theta_1 \om$. They represent the diffraction measure as a matrix Riesz product, using a weak-star limit of (\ref{coc2}), appropriately normalized and viewed as a matrix of densities of absolutely continuous measures.
(We note that a similar generalized Riesz product expression for the matrix of spectral measures for a substitution $\Z$-action was given in \cite[Lemma 2.2]{BuSo1}.)
Building on this expression, Baake et al.\ obtain functional equations for the absolutely continuous components of the diffraction measures, which under certain conditions, expressed in terms of the upper Lyapunov exponents of (\ref{coc2}), imply triviality of these components and hence singularity of the spectrum.
Earlier this approach was used in the constant length case, in particular, for the Thue-Morse substitution and its generalizations \cite{KS,BGG}. In comparison, our proof of singularity is based on growth estimates of twisted Birkhoff integrals, which then yield pointwise dimensions of spectral measures incompatible with a non-trivial absolutely continuous component.

To be more specific, in the paper \cite{BFGR} estimates for the upper Lyapunov exponent of (\ref{coc2}) at almost every $\om\in \R$ were used to establish singularity of the diffraction measure 
for a  non-Pisot substitution on a two-letter alphabet, defined by  $0\to 0111,\ 1\to 0$, and in \cite{BGM} this was extended to the family of substitutions $0\to 01^k,\ 1\to 0$, with $k\ge 4$. In the paper \cite{BG2M} a general primitive substitution is considered. \cite[Theorem 3.24]{BG2M} may be restated as a claim that if $\chi^+_{\zeta,\om\vec{s}}< \half\log\theta_1$, for Lebesgue-a.e.\ $\om\in \R$, then the diffraction measure is singular. This can be deduced from Corollary~\ref{cor-spec10}(ii). 
In the special case of self-similar flow,  Corollary~\ref{cor-spec10}(i) is proved in \cite[Theorem 3.29]{BG2M} directly. 

The almost sure existence  of the Lyapunov exponent for (\ref{coc2}) is not clear, except in the Pisot case and in the constant length case, see \cite{BG2M}.
 Considering the cocycle (\ref{cocycle3}) on the torus we automatically obtain the existence of the Lyapunov exponent in the ergodic case
 almost everywhere on $\T^m$, but, of course, the set $\{\om \vec{s}\  \mbox{(mod  $\Z^m$)}:\ \om\in \R\}$ for a fixed $\vec{s}$ has Haar measure zero.

It should be mentioned that Queffelec \cite{Queff} made extensive use of generalized matrix Riesz products to represent spectral measures for constant length substitutions.  Various  classes of Riesz products appeared in the description of the spectral type of measure-preserving systems in other contexts, see e.g.\ \cite{Led,Bour,DE,Abd}.

\bigskip
 
 The rest of the paper is organized as follows. In the next section we obtain estimates relating the growth of twisted Birkhoff integrals to the local behaviour of spectral measures. It is completely general and applies to any measure-preserving flow. In Section~\ref{section-proofs} we prove Theorems~\ref{th-main10} and \ref{th-main1110}, and then derive Theorems~\ref{th-main1} and \ref{th-main11}.
Section~\ref{sec-add} contains the remaining proofs. Finally, in the Appendix, Section~\ref{section-stratum}, we verify H\"older continuity for a.e.\ translation flow in $\Hk(1,1)$.


\section{Spectral estimates} \label{section-spectral}

Let $(Y,\mu,h_t)$ be a measure-preserving flow.
For $f \in L^2(Y,\mu)$, $R>0$,  $\om \in \R$, and $y\in Y$ let
$$
G_R(f,\om) = R^{-1} \left\|\int_0^R e^{-2\pi i \om t} f( h_t y) \,dt\right\|_{L^2(Y)}^2\  \ \mbox{and}\ \  S_R^y(f,\om) = \int_0^R e^{-2\pi i \om t} f(h_t y)\,dt,
$$
so that
$$
G_R(f,\om) = R^{-1} \int_Y |S_R^y(f,\om)|^2\,d\mu(y).
$$
Let $\sig_f$ be the spectral measure for the flow, defined by
$$
\int_\R e^{2\pi i \om t}\,d\sig_f(\om) = \langle f\circ h_t,f\rangle_{_{L^2}},\ \ t\in \R.
$$
It is well-known that 
\be \label{fejer}
G_R(f,\om) = \int_\R K_R(\om-\tau)\,d\sig_f(\tau),\ \ 
 \mbox{ where}\ \ 
 K_R(y) = R^{-1} \left( \frac{\sin (\pi R y)}{\pi y}\right)^2,\ \mbox{for}\ y\ne 0,
 \ee 
 and $K_R(0):=R$,
 is the Fej\'er kernel for $\R$, see e.g.,  \cite[2.2]{Hof}. The following lemma is a minor variation of a result by A. Hof \cite[2.2]{Hof}, and is a special case of \cite[Lemma 3.1]{BuSo1}.

\begin{lemma} \label{lem-spec1}
Suppose that for some fixed $\om \in \R$, $\alpha \ge 0$, { fixed} $R\ge 1$, and $f\in L^2(Y,\mu)$ we have
$$
G_R(f,\om) \le C_1 R^{1-\alpha}.
$$
Then
$$
\sig_f(B_r(\om)) \le C_1 \pi^2 2^\alpha  r^\alpha,\ \ \mbox{for}\ r = (2R)^{-1}.
$$
\end{lemma}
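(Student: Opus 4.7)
The plan is to read off the bound on $\sig_f(B_r(\om))$ directly from the Fej\'er-kernel representation
\[
G_R(f,\om) \;=\; \int_\R K_R(\om - \tau)\,d\sig_f(\tau),
\]
which is displayed just before the lemma statement. Since $K_R$ is nonnegative, I can discard everything outside $B_r(\om)$ (for the particular radius $r = 1/(2R)$) and obtain the one-sided bound
\[
G_R(f,\om) \;\ge\; \sig_f(B_r(\om)) \cdot \inf_{\tau \in B_r(\om)} K_R(\om-\tau).
\]

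The only real computation is a lower bound on this infimum, and this is where the particular choice $r = 1/(2R)$ earns its keep: it forces the ball to sit inside the main lobe of $K_R$, where the kernel is comparable to its peak value $R$. Concretely, when $|\om-\tau|\le 1/(2R)$ the argument $\pi R(\om-\tau)$ lies in $[-\pi/2,\pi/2]$, so I would invoke the elementary concavity inequality $\sin x \ge (2/\pi)x$ on $[0,\pi/2]$ to get $\sin^2(\pi R(\om-\tau)) \ge 4 R^2(\om-\tau)^2$, whence $K_R(\om-\tau) \ge 4R/\pi^2$ on $B_r(\om)\setminus\{\om\}$ (and trivially at $\tau=\om$).

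Combining this with the hypothesis $G_R(f,\om)\le C_1 R^{1-\alpha}$ gives
\[
\tfrac{4R}{\pi^2}\,\sig_f(B_r(\om)) \;\le\; C_1 R^{1-\alpha},
\]
and substituting $R^{-\alpha} = (2r)^\alpha = 2^\alpha r^\alpha$ yields $\sig_f(B_r(\om)) \le (\pi^2/4)\, C_1 \cdot 2^\alpha r^\alpha$, which is in fact slightly sharper than the claimed constant $C_1 \pi^2 2^\alpha$. There is no real obstacle here; this is Hof's standard trick for turning an $L^2$-growth estimate into a local H\"older bound on the spectral measure, and the generous constant $\pi^2$ in the statement leaves ample slack for any variant of the sine lower bound one prefers.
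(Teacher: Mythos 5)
Your proof is correct and is essentially identical to the paper's: both use the Fej\'er-kernel representation, restrict to the main lobe via $|\sin x|\ge (2/\pi)|x|$ to get $K_R(y)\ge 4R/\pi^2$ for $|y|\le (2R)^{-1}$, and conclude $\sig_f(B_r(\om))\le \frac{\pi^2}{4R}G_R(f,\om)$. Your observation that this actually yields the constant $\frac{\pi^2}{4}C_1 2^\alpha$, sharper than the stated $C_1\pi^2 2^\alpha$, is accurate and consistent with the paper's phrasing that ``the desired estimate follows.''
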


\begin{proof}
Note that $|\sin x| \ge (2/\pi)|x|$ for $|x|\le \pi/2$, hence $K_R(y)\ge 4R/\pi^2$ for $|y|\le (2R)^{-1}$. Thus, (\ref{fejer}) implies $\sig_f(B_{\frac{1}{2R}}(\om)) \le \frac{\pi^2}{4R} G_R(f,\om)$, and the desired estimate follows.
\end{proof}

The next lemma is also inspired by \cite{Hof}, however, there is a difference, since we are interested in estimates for a fixed $\om$, whereas A. Hof considers uniform H\"older continuity on an interval.

\begin{lemma} \label{lem-specest}
Suppose that for some fixed $\om \in \R$, $\alpha\in (0,2)$, $r_0\in (0,1)$, and $f\in L^2(Y,\mu)$ we have
\be \label{est1}
\sig_f(B_r(\om)) \le C_2 r^\alpha,\ \ \mbox{for all}\ \ r\in (0,r_0).
\ee
Then
\be \label{est2}
G_R(f,\om) \le C_3 R^{1-\alpha},\ \ \mbox{for all}\ \ R\ge r_0^{-\frac{2}{2-\alpha}},
\ee
where one can take
$$
C_3 = C_2\Bigl(1 + \frac{2}{\pi^2(2-\alpha)}\Bigr) + \pi^{-2}{\|f\|}_2^2.
$$
\end{lemma}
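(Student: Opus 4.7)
The plan is to use the Fejér kernel representation \eqref{fejer}, which gives
$$
G_R(f,\om) = \int_\R K_R(\om-\tau)\,d\sig_f(\tau),
$$
together with the two elementary bounds on $K_R$: the trivial $K_R(y)\le R$ (from $|\sin x|\le 1$), and the decay bound $K_R(y) \le \frac{1}{\pi^2 R y^2}$ for $y\ne 0$. I would split the integral into three regions corresponding to $|y|\le 1/R$, $1/R < |y|\le r_0$, and $|y|>r_0$, and apply the appropriate estimate in each. The restriction $R\ge r_0^{-2/(2-\alpha)}$ will come out precisely from forcing the tail (third region) contribution to match the desired rate $R^{1-\alpha}$.

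For the first region I would use $K_R\le R$ together with the hypothesis \eqref{est1}, giving a contribution $\le R\cdot C_2 R^{-\alpha} = C_2 R^{1-\alpha}$; this requires $1/R < r_0$, which is implied by the lower bound assumed on $R$ (since $2/(2-\alpha)>1$). For the middle region, I would apply the $y^{-2}$ decay and integrate by parts against the distribution function $F(s):=\sig_f(B_s(\om))$, which satisfies $F(s)\le C_2 s^\alpha$ on $(0,r_0)$. This gives
$$
\int_{1/R}^{r_0} \frac{dF(s)}{\pi^2 R s^2} \le \frac{F(r_0)}{\pi^2 R r_0^2} + \frac{2}{\pi^2 R}\int_{1/R}^{r_0} \frac{F(s)}{s^3}\,ds \le \frac{C_2 r_0^{\alpha-2}}{\pi^2 R} + \frac{2C_2}{\pi^2 R(2-\alpha)}\bigl(R^{2-\alpha}-r_0^{\alpha-2}\bigr),
$$
which is bounded by $\frac{2C_2}{\pi^2(2-\alpha)}\,R^{1-\alpha}$ (the positive boundary term cancels against the negative term produced by the integral, up to the desired sign).

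For the outer region $|y|>r_0$ I would use the uniform bound $K_R(y)\le \frac{1}{\pi^2 R r_0^2}$ there, together with the total mass $\sig_f(\R)={\|f\|}_2^2$, to get a contribution $\le \frac{{\|f\|}_2^2}{\pi^2 R r_0^2}$. Requiring this to be $\le \frac{{\|f\|}_2^2}{\pi^2}\,R^{1-\alpha}$ is equivalent to $r_0^{-2}\le R^{2-\alpha}$, i.e., the hypothesis $R\ge r_0^{-2/(2-\alpha)}$, which is exactly the threshold stated in the lemma. Summing the three contributions yields the announced constant
$C_3 = C_2\bigl(1 + \tfrac{2}{\pi^2(2-\alpha)}\bigr) + \pi^{-2}{\|f\|}_2^2$.

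I do not expect serious obstacles: the computation is a clean trade-off between the two bounds on the Fejér kernel. The only subtle point is making sure the tail term is absorbed into the right-hand side, which is what fixes the admissible range of $R$; any looser split (say, with a threshold different from $1/R$ and $r_0$) would give either a worse constant or a worse exponent, so the choice of these two scales is the one bit of care needed.
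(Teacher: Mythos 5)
Your proof is correct and follows essentially the same route as the paper's: both start from the Fej\'er-kernel bound $G_R(f,\om) \le R\,\sig_f(B_{1/R}(\om)) + \frac{1}{\pi^2 R}\int_{|\tau-\om|>1/R}\frac{d\sig_f(\tau)}{(\om-\tau)^2}$, use the hypothesis \eqref{est1} on the near region and the total mass ${\|f\|}_2^2$ on the far region, and recover exactly the constant $C_3$. The only (immaterial) differences are that you place the outer cut at $r_0$ while the paper cuts at $R^{-(2-\alpha)/2}\le r_0$, and you handle the middle integral by Stieltjes integration by parts against $F(s)=\sig_f(B_s(\om))$ where the paper uses the layer-cake identity $\int_{|\tau-\om|>1/R}\frac{d\sig_f(\tau)}{(\om-\tau)^2}\le 2\int_{1/R}^{\infty}\sig_f(B_x(\om))\,x^{-3}\,dx$; both choices yield the same estimate.
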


The ``borderline case'' of $\alpha=2$ is considered in the next lemma. The result is similar, but with a logarithmic correction.

\begin{lemma} \label{lem-spec2}
Suppose that for some fixed $\om \in \R$,  $r_0\in (0,e^{-1})$, and $f\in L^2(Y,\mu)$ we have
\be \label{est11}
\sig_f(B_r(\om)) \le \wt C_2 r^2,\ \ \mbox{for all}\ \ r\in (0,r_0).
\ee
Then
\be \label{est21}
G_R(f,\om) \le \wt C_3 R^{-1}\ln R,\ \ \mbox{for all}\ \ R\ge \max\{r_0^{-1}, e^{r_0^{-2}}\},
\ee
where one can take
$$
\wt C_3 = \wt C_2(1 + 2\pi^{-2}) + \pi^{-2}{\|f\|}_2^2.
$$
\end{lemma}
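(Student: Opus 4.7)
The plan is to adapt the proof of Lemma \ref{lem-specest} to the borderline exponent $\alpha=2$, where the divergence of $\int u^{-1}\,du$ produces the logarithmic correction. Starting from the Fej\'er-kernel identity (\ref{fejer}) and combining the two elementary estimates $K_R(y)\le R$ and $K_R(y)\le(\pi^2 R y^2)^{-1}$ for $y\ne 0$, I introduce the radial distribution function $F(u):=\sig_f(B_u(\om))$ and split the Stieltjes integral $G_R(f,\om)=\int_0^\infty K_R(u)\,dF(u)$ at the scales $1/R$ and $r_0$ into a near, middle, and far piece.

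The near piece is bounded by $R\cdot F(1/R)\le \wt C_2/R$ using (\ref{est11}); this uses $R\ge r_0^{-1}$, so that the threshold $1/R$ lies below $r_0$. The far piece is bounded by $(\pi^2 R r_0^2)^{-1}\|f\|_2^2$, which becomes $\le \pi^{-2}\|f\|_2^2\,R^{-1}\ln R$ as soon as $r_0^{-2}\le \ln R$, i.e.\ as soon as $R\ge e^{r_0^{-2}}$. Thus both hypotheses on $R$ in (\ref{est21}) are exactly what is needed.

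The main computation is the middle piece $(\pi^2 R)^{-1}\int_{1/R}^{r_0}u^{-2}\,dF(u)$, analogous to the power-law step in Lemma \ref{lem-specest} but with a logarithmic outcome. Stieltjes integration by parts yields
\[
\int_{1/R}^{r_0}u^{-2}\,dF(u)=r_0^{-2}F(r_0)-R^2F(1/R)+2\int_{1/R}^{r_0}u^{-3}F(u)\,du,
\]
and using $F(u)\le \wt C_2 u^2$ and discarding the non-positive term $-R^2F(1/R)$, this is $\le \wt C_2\bigl(1+2\ln(r_0R)\bigr)$. The hypothesis $r_0<e^{-1}$ is then invoked to note that $1+2\ln r_0<-1<0$, so that $\wt C_2\bigl(1+2\ln(r_0R)\bigr)\le 2\wt C_2\ln R$; this is what delivers the clean coefficient $2\pi^{-2}\wt C_2$ on $\ln R$. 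Combining the three bounds, and absorbing the near piece into $\wt C_2 R^{-1}\ln R$ via $\ln R>1$ (which is forced by $R\ge r_0^{-1}>e$), gives $G_R(f,\om)\le \wt C_3 R^{-1}\ln R$ with the stated $\wt C_3=\wt C_2(1+2\pi^{-2})+\pi^{-2}\|f\|_2^2$.

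I do not anticipate any substantive obstacle: the proof parallels Lemma \ref{lem-specest} line by line, the only conceptual change being the replacement of the antiderivative $u^{\alpha-2}/(\alpha-2)$ by $\ln u$ at the critical exponent. The peculiar-looking quantitative hypothesis $r_0<e^{-1}$ is precisely what is needed to absorb the boundary constant from the integration-by-parts step without inflating $\wt C_3$, and the coupling $R\ge e^{r_0^{-2}}$ is the sole device that turns the rough bound $r_0^{-2}\|f\|_2^2/R$ from the far piece into a logarithmic one.
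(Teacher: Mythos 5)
Your proof is correct and follows essentially the same route as the paper: both start from the Fej\'er-kernel bound (\ref{est3}), split the tail integral, apply the quadratic bound (\ref{est11}) on the inner range and the total mass ${\|f\|}_2^2$ on the outer range, and use $R\ge r_0^{-1}\ge e$ to absorb the near piece into $R^{-1}\ln R$. The only (cosmetic) differences are that you cut the tail at $r_0$ and invoke $R\ge e^{r_0^{-2}}$ to turn the far-piece constant $r_0^{-2}$ into $\ln R$, whereas the paper cuts at $1/\sqrt{\log R}$ and uses the same hypothesis in the equivalent form $1/\sqrt{\log R}\le r_0$, and that you perform an explicit Stieltjes integration by parts (absorbing the boundary term via $r_0<e^{-1}$) where the paper uses the boundary-term-free layer-cake identity; the constants agree.
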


Observe that
$$
K_R(y) \le \left\{\begin{array}{ll} R, & |y| \le \frac{1}{R};\\[1.2ex]
                                                  \frac{1}{\pi^2 R y^2}, & |y| > \frac{1}{R}. \end{array} \right.
$$
It follows that
\be \label{est3}
G_R(f,\om) \le R \cdot \sig_f(B_{1/R}(\om)) + \frac{1}{\pi^2 R} \int_{|\tau-\om|> 1/R} \frac{d\sig_f(\tau)}{(\om-\tau)^2}\,.
\ee

\begin{proof}[Proof of Lemma~\ref{lem-specest}]
Let $R\ge r_0^{-\frac{2}{2-\alpha}}$. By (\ref{est1}), the first term in (\ref{est3}) is not greater than $C_2 R^{1-\alpha}$, since $R^{-1} \le  r_0^{\frac{2}{2-\alpha}} \le r_0$. The integral in the second term can be rewritten, and then estimated, as

\smallskip

\begin{eqnarray*}
\int_{|\tau-\om|> 1/R} \frac{d\sig_f(\tau)}{(\om-\tau)^2} &  = & \int_0^{R^2} \sig_f\bigl(\{\tau:\ R^{2}> |\tau-\om|^{-2} \ge t\}\bigr)\,dt \\[1.2ex]
                                                                                     &  =  & 2\int_{1/R}^{\infty} \sig_f\bigl(\{\tau:\ R^{2} > |\tau-\om|^{-2}  \ge x^{-2}\}\bigr)\,x^{-3}\,dx \\[1.2ex]
                                                                                     &  =  & 2\int_{1/R}^{\infty} \sig_f\bigl(\{\tau:\ R^{-1} < |\tau-\om|  \le  x\}\bigr)\,x^{-3}\,dx \\[1.2ex]
                                                                                     &  \le & 2\int_{1/R}^\infty \sig_f(B_x(\om))\,x^{-3}\,dx \\[1.2ex]
                                                                                     &  = & 2\left(\int_{1/R}^{R^{-\frac{2-\alpha}{2}}}  + \int_{R^{-\frac{2-\alpha}{2}}}^\infty \right)\sig_f(B_x(\om))\,x^{-3}\,dx.                                                                                                                                                                      \end{eqnarray*}
The first integral in the last line may be estimated from above by 
$$
2\int_{1/R}^{R^{-\frac{2-\alpha}{2}}} \sig_f(B_x(\om))\,x^{-3}\,dx\le  2C_2 \int_{1/R}^\infty x^{\alpha-3}\,dx = \frac{2C_2 R^{2-\alpha}}{2-\alpha}.$$
 In the second integral, we simply use that
$\sig_f$ is a positive measure of total mass ${\|f\|}^2_2$ to obtain
$$
2\int_{R^{-\frac{2-\alpha}{2}}}^\infty \sig_f(B_x(\om))\,x^{-3}\,dx \le 2{\|f\|}^2_2 \int_{R^{-\frac{2-\alpha}{2}}}^\infty x^{-3}\,dx = {\|f\|}^2_2\cdot R^{2-\alpha}.
$$
Combining these together with (\ref{est3}) yields (\ref{est2}).
\end{proof}

\begin{proof}[Proof of Lemma~\ref{lem-spec2}]
Let $R\ge \max\{r_0^{-1}, e^{r_0^{-2}}\}$.
By (\ref{est11}), the first term in (\ref{est3}) is not greater than $\wt C_2 R^{-1}\le \wt C_2 R^{-1}\log R$, where we used $R \ge r_0^{-1} \ge e$. The integral in the second term of (\ref{est3}) can be estimated as follows (the change of variable is the same as in the proof of Lemma~\ref{lem-specest}):
\begin{eqnarray*}
\int_{|\tau-\om|> 1/R} \frac{d\sig_f(\tau)}{(\om-\tau)^2}
                                                                                     &  \le & 2\int_{1/R}^\infty \sig_f(B_x(\om))\,x^{-3}\,dx \\[1.2ex]
                                                                                     &  = & 2\left(\int_{1/R}^{1/\sqrt{\log R}}  + \int_{1/\sqrt{\log R}}^\infty \right)\sig_f(B_x(\om))\,x^{-3}\,dx.                                                                                                                                                                      \end{eqnarray*}
Note that 
$\sqrt{\log R}\le r_0$, so the first integral above  can be estimated from above using (\ref{est11}): 
$$
2 \int_{1/R}^{1/\sqrt{\log R}}\sig_f(B_x(\om))\,x^{-3}\,dx \le 2\wt C_2 \int_{1/R}^{1} x^{-1}\,dx = 2\wt C_2 \log R.
$$
For the second integral, we have
$$
2  \int_{1/\sqrt{\log R}}^\infty \sig_f(B_x(\om))\,x^{-3}\,dx \le 
2{\|f\|}^2_2 \int_{1/\sqrt{\log R}}^\infty x^{-3}\,dx=2{\|f\|}^2_2\cdot \log R.
$$
Combining these together with (\ref{est3}) yields (\ref{est21}).                                                                                     
\end{proof}

The following remark is (essentially) taken from \cite[2.5]{Hof}.

\begin{remark} {\em 
(i) For all $\om\in \R$ we have $\sig_f(\{\om\}) = \lim_{R\to\infty} R^{-1} G_R(f,\om)$, since $R^{-1}K_R$ tends to zero uniformly outside any neighborhood of $0$, and $R^{-1}K_R(0)=1$.

(ii) Let $d\sig_f = h\,dx + d(\sig_f)_{\rm sing}$ be the decomposition of the spectral measure into the absolutely continuous and singular parts. Then
$$
\lim_{R\to \infty} G_R(f,\om) = h(\om)\ \ \mbox{for Lebesgue-a.e.}\ \om\in \R.
$$
}
\end{remark}


\section{Proof of the theorems} \label{section-proofs}

\subsection{Twisted ergodic integrals and the spectral cocycle}
Using the results from the previous section, we can obtain local bounds on the spectral measure from estimates of the growth of twisted ergodic integrals
\be \label{twist1}
S_R^{(y)}(f,\om) := \int_0^R e^{-2\pi i \om \tau} f\circ h_\tau(y)\,d\tau.
\ee

Recall that we consider suspension flows over an $S$-adic system, with a roof function determined by $\vec{s} = (s_j)_{j\in \Ak}\in \R^m_+$. Let $f$ be a Lip-cylindrical function. This means, by definition, that there are functions $\psi_j \in \Lip([0,s_j])$ for $j\in \Ak$, and $f = \sum_{j\in \Ak} f_j$, where 
$$
f_j(x,t) = \left\{\begin{array}{ll} \psi_j(t),\ t \in [0,s_j], & \mbox{if}\ x_0 = j;\\ 0, & \mbox{else}.\end{array}\right.
$$
  By definition, $(x,t) = h_t(x,0)$, with $t\in [0, s_{x_0}]$, hence
\begin{eqnarray*}
|S_R^{(x,t)}(f,\om)| & = & \Bigl|\int_0^R e^{-2\pi i \om\tau} f\circ h_{\tau + t}(x,0)\,d\tau \Bigr| \\
& = & \Bigl|\int_t^{R+t} e^{-2\pi i \om\tau} f\circ h_{\tau}(x,0)\,d\tau \Bigr|.
\end{eqnarray*} 
It follows that 
\be \label{er1}
\Bigl| S_R^{(x,t)}(f,\om) - S_R^{(x,0)}(f,\om)\Bigr| \le 2{\|f\|}_\infty \cdot s_{\max},
\ee
where $s_{\max} = \max_{j\in \Ak} s_j$.

For a word $v\in \Ak^+$ denote by $\vec{\ell}(v)\in \Z^m$ its population vector whose $j$-th entry is the number of $j$'s in $v$, for $j\le m$. We will  need the
tiling length of $v=v_0\ldots v_{N-1}$ defined by 
\be \label{tilength}
|v|_{\vec{s}}:= \langle\vec{\ell}(v), \vec{s}\rangle = s_{v_0} + \cdots + s_{v_{N-1}}.
\ee

Fix $x\in X_\ba$ and denote $x[0,N-1] = x_0\ldots x_{N-1}$. For $R\ge s_{\min} = \min_{j\in \Ak} s_j$, let $N\ge 1$ be
maximal such that $\wt R := \left|x[0,N-1]\right|_{\vec{s}} \le R$. Then $|R-\wt R| \le s_{\max}$, hence
\be \label{er2}
\Bigl| S_R^{(x,0)}(f,\om) - S_{\wt R}^{(x,0)}(f,\om)\Bigr| \le {\|f\|}_\infty \cdot s_{\max}.
\ee
The combined error term in (\ref{er1}) and (\ref{er2}) is at most $3{\|f\|}_\infty \cdot s_{\max}$, so it suffices to get bounds on the growth of $|S_{\wt R}^{(x,0)}(f,\om)|$, with 
$\wt R = \left|x[0,N-1]\right|_{\vec{s}}$.

Let $\fu:\Ak\to \C$ be an arbitrary function (this is just a vector in $\C^m$, but it is  sometimes convenient to view it as a function in the alphabet).
For $v=v_0\ldots v_{N-1}\in \Ak^+$ let
\be \label{def-Phi3}
\Phi_\fu^{\vec{s}}(v,\om) = \sum_{j=0}^{N-1} \fu(v_j) \exp(-2\pi i \om |v_0\ldots v_j|_{\vec{s}}).
\ee
In view of
$$
f\circ h_\tau(x,0) = \psi_{x_n} (\tau - (s_{x_0}+\cdots + s_{x_{n-1}})),\ \ \mbox{for}\ \ s_{x_0}+\cdots + s_{x_{n-1}} \le \tau < s_{x_0}+\cdots + s_{x_{n}},
$$
a  calculation yields
\be \label{SR1}
S_R^{(x,0)}(f,\om) = {\Phi_\fu^{\vec{s}}(x[0,N-1],\om)}\ \ \ \mbox{for}\ \ R = \left|x[0,N-1]\right|_{\vec{s}},\ \ \ \mbox{where}\ \ \fu(j) =  \widehat{\psi}_j(\om).
\ee
 In view of (\ref{def-Phi3}),
for any concatenation of two words $uv$ we have
 \be \label{eq-Phi}
 \Phi^{\vec{s}}_\fu(uv,\om) = \Phi^{\vec{s}}_\fu(u,\om) + e^{-2\pi i \om |u|_{\vec{s}}} \cdot\Phi^{\vec{s}}_\fu(v,\om).
 \ee
Denote
$$
\Phi_j^{\vec{s}}(v,\om) = \Phi_{\delta_j}^{\vec{s}}(v,\om),
$$
where $\delta_j(i) = 1$ whenever $i=j$, and else 0.
Observe that
\be \label{matrux}
\left[\Phi_a^{\vec{s}} (\zeta^{[n]}(b), \om)\right]_{(a,b)\in \Ak^2}\cdot{\vec{\fu}} =  \left[\Phi_\fu^{\vec{s}} (\zeta^{[n]}(b), \om)\right]_{b\in \A},
\ee
where $\left[\Phi_a^{\vec{s}} (\zeta^{[n]}(b), \om)\right]_{(a,b)\in \Ak^2}$ is a matrix-function, and
$\vec{\fu}:={(\fu(a))_{a\in \Ak}}$ and the  right-hand side of (\ref{matrux}) are  column vectors.
Suppose that $$\zeta_j(b) = u_1^{b,j}\ldots u_{|\zeta_j(b)|}^{b,j},\ \ b\in \Ak, \ j\ge 1,$$ and let
\be \label{matru}
(\M^{\vec{s}}_{[n]}(\om))(b,c) = \sum_{j \le |\zeta_n(b)|:\ u_j^{b,n} = c} \exp\left[-2\pi i \om \left(|\zeta^{[n-1]}(u_1^{b,n}\ldots u_{j-1}^{b,n})|_{\vec{s}}\right)\right],\ n\ge 1.
\ee
We then obtain, as in   \cite[Section 3.3]{BuSo2}:
\be \label{RRR}
\left[\Phi_a^{\vec{s}} (\zeta^{[n]}(b), \om)\right]_{(a,b)\in \Ak^2} = \M^{\vec{s}}_{[n]}(\om) \cdot \ldots \cdot \M^{\vec{s}}_{[1]}(\om).
\ee
The formula (\ref{matru}) may be rewritten using
\begin{multline}\label{cui1}
|\zeta^{[n-1]}(u_1^{b,n}\ldots u_{j-1}^{b,n})|_{\vec{s}}  =  \langle \ell(\zeta^{[n-1]} (u_1^{b,n}\ldots u_{j-1}^{b,n})), \vec{s}\rangle = \\
 =   \langle \Sf^{[n-1]} \ell(u_1^{b,n}\ldots u_{j-1}^{b,n}), \vec{s}\rangle= \langle \ell(u_1^{b,n}\ldots u_{j-1}^{b,n}), (\Sf^{[n-1]})^t \vec{s}\rangle  =  \sum_{k=1}^{j-1} \bigl(  (\Sf^{[n-1]})^t  \vec{s}\bigr)_{u_k^{b,n}}. 
\end{multline}
Equations (\ref{matru}), (\ref{RRR}), and (\ref{cui1}), together with the Definition  \ref{def-cocycle} of the cocycle, imply
\be \label{new1}
\left[\Phi_a^{\vec{s}} (\zeta^{[n]}(b), \om)\right]_{(a,b)\in \Ak^2} =\Cc_\ba(\xi,n),\ \ n\in \N,\ \ \mbox{for}\ \ \xi = \om \vec{s}\ \mbox{(mod $\Z^m$)}. 
\ee

\medskip

\subsection{Prefix-suffix decomposition and its consequences} 
By the definition of $S$-adic sequence space, for $x\in X_\ba$ and $N\geq 1$, we have
 \begin{equation} \label{eq-accord}
 x[0,N-1] = u_0 \zeta_1(u_1) \zeta^{[2]}(u_2) \ldots \zeta^{[n]} (u_n) \zeta^{[n]} (v_n) \ldots \zeta^{[2]}(v_2)\zeta_1(v_1) v_0,
 \ee
where  $u_j,v_j,\ j=0,\ldots,n$, are respectively proper suffixes and prefixes  of the words $\zeta_{j+1}(b)$, $b\in \mathcal A$. The words $u_j, v_j$ may be empty, but  at least one of $u_n, v_n$ is
nonempty; furthermore, we have
\be \label{Ncond}
\min_{b\in \Ak}|\zeta^{[n]}(b)| \le N \le 2\max_{b\in \Ak} |\zeta^{[{n+1}]}(b)|.
\ee

We will need upper and lower bounds for $N$ in terms  of $n$. 
Denote by $C=C_{\ba,\xi,\eps}>1$ a generic constant, which may depend only on $\ba$, $\xi$, and $\eps$, but may differ from line to line. 
It follows from condition (A2) that for every $\eps>0$ there exists $C=C_{\ba,\eps}$ such that
\be \label{ass1}
C e^{(\lam_\ba-\eps)j}\le {\|\Sf^{[j]}\|}_1 \le C e^{(\lam_\ba+\eps)j}\ \ \ \mbox{for all}\ \ j\in \N.
\ee
Recall that $|\zeta^{[n]}(b)|$ is a column sum of the matrix $\Sf^{[n]}$. 
Thus, (\ref{Ncond}) and (\ref{ass1}) yield, for every $\eps>0$ and $n\in \N$:
\be \label{assN}
N \le 2\max_{b\in \Ak} |\zeta^{[{n+1}]}(b)| = 2\|S^{[n+1]}\|_1 \le C e^{(\lam_\ba+\eps)n}.
\ee

The lower bound for $N$ is obtained in the next lemma, with the help of condition (A1$'$).

\begin{lemma} \label{lem-Nlower}
For  every $x\in X_\ba$ and $\eps>0$ there exists $C=C_{\ba,\eps}>1$, such that for all $N\ge 1$:
\be \label{Nlower}
N \ge C^{-1} e^{(\lam_\ba-\eps)n},
\ee
where $n\in \N$ is from (\ref{eq-accord}).
\end{lemma}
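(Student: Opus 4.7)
The proof begins with the prefix--suffix decomposition (\ref{eq-accord}), which yields $N \geq |\zeta^{[n]}(u_n)| + |\zeta^{[n]}(v_n)|$. Since at least one of $u_n, v_n$ is nonempty and therefore contains some letter $a\in \Ak$, this gives $N \geq \min_{a\in \Ak}|\zeta^{[n]}(a)|$; that is, $N$ is bounded below by the minimum column sum of the non-negative integer matrix $\Sf^{[n]} = \Sf_1\cdots \Sf_n$. The strategy is to bound this minimum column sum from below using the strictly positive matrix $\Sf_\bq$ supplied by condition {\bf (A1$'$)}.

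Fix $\eps'>0$, to be chosen small in terms of $\eps$. By {\bf (A1$'$)}, for all sufficiently large $n$ the word $\bq$ appears as a subword $\zeta_{p+1}\ldots \zeta_q$ with $\lfloor n(1-\eps')\rfloor \leq p$ and $q = p+|\bq|\leq n$, giving the factorization $\Sf^{[n]} = \Sf^{[p]} \cdot \Sf_\bq \cdot (\Sf_{q+1}\cdots \Sf_n)$, where the rightmost factor is interpreted as the identity when $q=n$. For every $a\in \Ak$, the vector $(\Sf_{q+1}\cdots \Sf_n)\,\vec{e}_a$ is a non-negative integer vector whose $\ell^1$-norm equals $|\zeta_{q+1}\circ\cdots\circ\zeta_n(a)| \geq 1$. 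Since every entry of $\Sf_\bq$ is at least $1$, this forces $\Sf_\bq\cdot(\Sf_{q+1}\cdots \Sf_n)\,\vec{e}_a \geq \vec{1}$ coordinatewise. Multiplying on the left by the non-negative matrix $\Sf^{[p]}$ preserves the inequality, and taking $\ell^1$-norms gives
\[
|\zeta^{[n]}(a)| \;=\; \|\Sf^{[n]}\vec{e}_a\|_1 \;\geq\; \|\Sf^{[p]}\vec{1}\|_1 \;=\; \sum_{b\in \Ak}|\zeta^{[p]}(b)| \;\geq\; \|\Sf^{[p]}\|_1.
\]

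To finish, apply the lower half of (\ref{ass1}) with a small parameter $\eps''>0$: for some $C=C_{\ba,\eps''}$ we have $\|\Sf^{[p]}\|_1 \geq C^{-1} e^{(\lam_\ba - \eps'')p} \geq C^{-1} e^{(\lam_\ba-\eps'')(n(1-\eps') - 1)}$. Choosing $\eps',\eps''$ so that $(\lam_\ba - \eps'')(1-\eps') \geq \lam_\ba - \eps$ and absorbing the bounded factor $e^{-(\lam_\ba - \eps'')}$ into the constant yields (\ref{Nlower}) for all $n$ beyond the threshold supplied by {\bf (A1$'$)}; the finitely many smaller values of $n$ are handled by further enlarging $C$. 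The only subtle point is transferring the positivity of a \emph{single} matrix $\Sf_\bq$ into a uniform lower bound on \emph{every} column of $\Sf^{[n]}$ of the right exponential order, and the factorization above is tailored precisely for this purpose.
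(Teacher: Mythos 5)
Your proof is correct and follows essentially the same route as the paper: bound $N$ below by the minimum column sum of $\Sf^{[n]}$, use the strictly positive block $\Sf_\bq$ guaranteed near position $n$ by {\bf (A1$'$)} to convert the maximum column sum of $\Sf^{[p]}$ into a lower bound for every column of $\Sf^{[n]}$, and then invoke (\ref{ass1}). The only cosmetic difference is that you handle the tail $\Sf_{q+1}\cdots\Sf_n$ by a direct coordinatewise estimate, where the paper uses the monotonicity of the minimum column sum under right multiplication by substitution matrices.
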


\begin{proof}
We have $N\ge \min_{b\in \Ak} |\zeta^{[n]}(b)|$ by (\ref{Ncond}). Recall that $|\zeta^{[n]}(b)|$ is a column sum of the matrix $\Sf^{[n]}$. We have $\Sf^{[j+1]} = \Sf^{[j]}\Sf_{j+1}$ for all $j\in \N$. Every substitution matrix $\Sf_j= \Sf_{\zeta_j}$ has non-negative integer entries, with at least one positive entry in each column. Thus $\min_{b\in \Ak} |\zeta^{[j+1]}(b)| \ge \min_{b\in \Ak} |\zeta^{[j]}(b)|$ for all $j$. By assumption, the matrix $\Sf_\bq$ has only strictly positive entries, with $|\bq| = |\ell|$, hence
\be \label{nuka3}
{\|\Sf^{[j]}\|}_1 = \max_{b\in \Ak} |\zeta^{[j]}(b)| \le \min_{b\in \Ak} |\zeta^{[j+\ell]}(b)|,\ \ \ \mbox{whenever}\ \ \ba = \zeta_1\ldots\zeta_j \,\bq \,\zeta_{j+\ell+1}\ldots
\ee
Let $\eps'>0$ be such that $(\lam_\ba-\eps')(1-\eps') \ge (\lam_\ba-\eps)$.
By the assumption {\bf (A1$'$)}, we have that there exists
$n_0 = n_0(\eps',\ba)$ such that for all $n\ge n_0$ the word $\bq$ appears as a subword of $\zeta_{\lfloor n-n\eps' \rfloor+1}, \zeta_{\lfloor n-n\eps' \rfloor+2},\ldots, \zeta_n$.
Then by (\ref{nuka3}) and (\ref{ass1}) we have
$$
N \ge \min_{b\in \Ak} |\zeta^{[n]}(b)| \ge C^{-1}  e^{(\lam_\ba-\eps')n(1-\eps')} \ge C^{-1}  e^{(\lam_\ba-\eps)n},\ \ \mbox{for}\ \ n\ge n_0(\eps,\ba),
$$
and by increasing the constant $C$ we can make sure that (\ref{Nlower}) holds for all $N\in \N$.
\end{proof}

Recall that
$\chi_{\ba,\xi,\vec{z}}^+$ is the pointwise  upper Lyapunov exponent of the cocycle at $\xi$ , corresponding to a given vector $\vec{z}$; see (\ref{Lyap1}) for the definition.

\begin{lemma} \label{lem-newup}
For every $\fu:\Ak\to \C$ and every $\eps>0$, we have for all $x\in X_\ba$ and $N\ge 1$,
\be\label{nukaa}
\bigl|\Phi_\fu^{\vec{s}}(x[0,N-1],\om)\bigr| \le C_1 N^{(\chi_{\ba,\xi,\vec{z}}^++\eps)/\lam_\ba}, 
\ee
where $\xi = \om\vec{s}$ (mod $\Z^m$), $\vec{z} = (\phi(a))_{a\in \Ak}$, and $C_1 = C_1(\ba,\xi,\fu,\eps)$.
\end{lemma}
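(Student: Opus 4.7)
The plan is to combine the prefix-suffix decomposition of $x[0,N-1]$ with the additive behavior of $\Phi^{\vec s}_\fu$ under concatenation, and then recognize that the building blocks $\Phi^{\vec s}_\fu(\zeta^{[j]}(b),\om)$ are precisely entries of the vector $\Cc_\ba(\xi,j)\vec z$.

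First I would fix $\eps>0$ and apply (\ref{eq-accord}) to write
$$x[0,N-1] = u_0\,\zeta_1(u_1)\,\zeta^{[2]}(u_2)\cdots\zeta^{[n]}(u_n)\,\zeta^{[n]}(v_n)\cdots\zeta_1(v_1)\,v_0,$$
where each $u_j$ (resp.\ $v_j$) is a proper suffix (resp.\ prefix) of some $\zeta_{j+1}(b)$, and $n$ satisfies (\ref{Ncond}). Iterating the concatenation identity (\ref{eq-Phi}), I obtain
$$\Phi^{\vec s}_\fu(x[0,N-1],\om) = \sum_{j=0}^{n} e^{-2\pi i\om t_j^-}\Phi^{\vec s}_\fu(\zeta^{[j]}(u_j),\om) + \sum_{j=0}^{n} e^{-2\pi i\om t_j^+}\Phi^{\vec s}_\fu(\zeta^{[j]}(v_j),\om),$$
with unit-modulus phase factors $e^{-2\pi i\om t_j^{\pm}}$ which can be ignored for the upper bound (with the convention $\zeta^{[0]}=\mathrm{id}$).

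Next, for each $j$, I would apply (\ref{eq-Phi}) again, this time decomposing $u_j = u_{j,1}\cdots u_{j,p_j}$ letter-by-letter, so that
$$\bigl|\Phi^{\vec s}_\fu(\zeta^{[j]}(u_j),\om)\bigr| \le \sum_{k=1}^{p_j}\bigl|\Phi^{\vec s}_\fu(\zeta^{[j]}(u_{j,k}),\om)\bigr|,$$
and likewise for $v_j$. Now by (\ref{matrux}) and (\ref{new1}), the quantity $\Phi^{\vec s}_\fu(\zeta^{[j]}(b),\om)$ is exactly the $b$-th coordinate of $\Cc_\ba(\xi,j)\vec z$, so
$$\bigl|\Phi^{\vec s}_\fu(\zeta^{[j]}(b),\om)\bigr| \le \|\Cc_\ba(\xi,j)\vec z\|.$$
By the definition (\ref{Lyap1}) of $\chi^+_{\ba,\xi,\vec z}$, there is $C=C(\ba,\xi,\fu,\eps)$ with $\|\Cc_\ba(\xi,j)\vec z\| \le C e^{(\chi^+_{\ba,\xi,\vec z}+\eps)j}$ for all $j\ge 0$. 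Since $|u_j|,|v_j|\le \|\Sf_{j+1}\|_1 \le 1+\|\Sf_{j+1}\|$, condition (A3) gives $|u_j|+|v_j|\le C e^{\eps j}$ after possibly enlarging $C$.

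Combining these pieces,
$$\bigl|\Phi^{\vec s}_\fu(x[0,N-1],\om)\bigr| \le C\sum_{j=0}^{n} e^{\eps j}\cdot e^{(\chi^+_{\ba,\xi,\vec z}+\eps)j} \le C'\,e^{(\chi^+_{\ba,\xi,\vec z}+3\eps)n}$$
(after summing the geometric-type series and absorbing the factor $n+1$). Finally, I would invoke the lower bound $N\ge C^{-1}e^{(\lam_\ba-\eps)n}$ from Lemma \ref{lem-Nlower} to convert powers of $e^n$ into powers of $N$, which yields $|\Phi^{\vec s}_\fu(x[0,N-1],\om)|\le C_1 N^{(\chi^+_{\ba,\xi,\vec z}+3\eps)/(\lam_\ba-\eps)}$. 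Since $\eps>0$ is arbitrary, the exponent may be brought below $(\chi^+_{\ba,\xi,\vec z}+\eps)/\lam_\ba$ by re-choosing $\eps$ appropriately, giving the claimed bound.

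The step I expect to be the most delicate is bookkeeping in the second concatenation: one has to confirm that after the letterwise split of $u_j$ and $v_j$, the accumulated phase is of the form $\om$ times an integer combination of entries of $(\Sf^{[n-1]})^t\vec s$ that exactly matches what enters $\Cc_\ba(\xi,j)$ via (\ref{matru})--(\ref{cui1}); this is the reason the identification $\Phi^{\vec s}_\fu(\zeta^{[j]}(b),\om) = \bigl(\Cc_\ba(\xi,j)\vec z\bigr)_b$ holds modulo $\Z^m$ in $\xi$. Everything else is summation and the standard inequalities from (A3) and Lemma \ref{lem-Nlower}.
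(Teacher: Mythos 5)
Your proposal is correct and follows essentially the same route as the paper's proof: prefix--suffix decomposition via (\ref{eq-accord}), the concatenation identity (\ref{eq-Phi}) together with the bound $|u_j|,|v_j|\le\|\Sf_{j+1}\|_1$, identification of $\Phi_\fu^{\vec{s}}(\zeta^{[j]}(b),\om)$ with coordinates of $\Cc_\ba(\xi,j)\vec z$ via (\ref{matrux}) and (\ref{new1}), the pointwise Lyapunov bound, condition (A3), and finally Lemma~\ref{lem-Nlower} with a rescaling of $\eps$. The delicate phase-bookkeeping step you flag is exactly what the paper's equations (\ref{matru})--(\ref{new1}) establish, so no gap remains.
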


\begin{proof} All constants in the proof may depend on $\ba,\xi,\fu$, and $\eps$, but not on $n$ and $N$. In view of Lemma~\ref{lem-Nlower}, it is enough to show that for
every $\fu:\Ak\to \C$ and every $\eps>0$, we have for all $x\in X_\ba$ and $N\ge 1$,
$$
\bigl|\Phi_\fu^{\vec{s}}(x[0,N-1],\om)\bigr| \le C e^{(\chi_{\ba,\xi,\vec{z}}^++\eps)n},
$$
where $n$ and $N$ are related by (\ref{Ncond}) and $\xi = \om\vec{s}\ ({\rm mod}\ \Z^m)$.
By (\ref{eq-accord}) and (\ref{def-Phi3}), we have
 \begin{eqnarray}
 \bigl|\Phi_\fu^{\vec{s}}(x[0,N-1],\om)\bigr| & \le & \sum_{j=0}^n \Bigl( \bigl| \Phi_\fu^{\vec{s}}(\zeta^{[j]}(u_j),\om)\bigr| + \bigl| \Phi_\fu^{\vec{s}}(\zeta^{[j]}(v_j),\om)\bigr|\Bigr)\nonumber \\
 & \le & 2\sum_{j=0}^n {\|\Sf_{j+1}\|}_1 \cdot \max_{b\in \Ak} \bigl| \Phi_\fu^{\vec{s}}(\zeta^{[j]}(b),\om)\bigr|. \label{eqq}
\end{eqnarray}
Here we used that $|u_j|, |v_j|\le \max_{b\in \Ak} |\zeta_{j+1}(b)| = {\|\Sf_{j+1}\|}_1$.
Now, by (\ref{eqq}), (\ref{new1}), and (\ref{matrux}),
\be \label{nuka}
\bigl|\Phi_\fu^{\vec{s}}(x[0,N-1],\om)\bigr| \le 2 \sum_{j=0}^n {\|\Sf_{j+1}\|}_1 \cdot  {\|\Cc_{\ba}(\xi,j)\vec{z}\|}_1.
\ee
 By the definition of the upper Lyapunov exponent,
 $$
 {\|\Cc_{\ba}(\xi,j)\vec{z}\|}_1 \le C e^{(\chi_{\ba,\xi,\vec{z}}^+ +\eps/2)j},\ \ \ \mbox{for all}\ \ j\ge 0.
 $$
Now we obtain from (\ref{nuka}) and (\ref{ass2}), for appropriate constants:
$$
\bigl|\Phi_\fu^{\vec{s}}(x[0,N-1],\om)\bigr| \le C'\sum_{j=0}^n e^{j \eps/2} e^{(\chi_{\ba,\xi,\vec{z}}^+ +\eps/2)j} \le C'' e^{(\chi_{\ba,\xi,\vec{z}}^++\eps)n},
$$
completing the proof.
\end{proof}

\subsection{Proof of the lower bound in Theorem~\ref{th-main10}}
\begin{sloppypar}
We  will  obtain {\em uniform estimates} of $|S_R^{(x,t)}(f,\om)|$ (which, of course, imply the $L^2$-estimate) from above, and then apply Lemma~\ref{lem-spec1}. By (\ref{SR1}) and (\ref{nukaa}), 
we have for $\eps>0$, $N\ge 1$, and $\wt R = \left|x[0,N-1]\right|_{\vec{s}} \ge N s_{\min}$:
\begin{eqnarray*}
|S_{\wt R}^{(x,0)}(f,\om)| & = & |{\Phi_\fu^{\vec{s}}(x[0,N-1],\om)}| \\
                                & \le & C\cdot N^{(\chi^+_{\ba,\xi,\vec{z}}+\eps)/\lam_\ba}\\
                                & \le & C\cdot (\wt{R}/s_{\min})^{(\chi^+_{\ba,\xi,\vec{z}}+\eps)/\lam_\ba}.
\end{eqnarray*}
Recall that $\vec{z} = (\what{\psi}_a(\om))_{a\in \Ak}$, where $\psi_a$ are the ``components'' of the test function $f$, given by (\ref{fcyl2}), and $\xi = \om\vec{s}$ (mod $\Z^m$). In view of
(\ref{er1}) and (\ref{er2}), the last inequality yields, for $\eps>0$:
$$
|S_R^{(x,t)}(f,\om)| \le C \cdot  {\|f\|}_\infty\cdot  (R/s_{\min})^{(\chi^+_{\ba,\xi,\vec{z}} + \eps)/\lam_\ba} + 3\|f\|_\infty\cdot s_{\max},\ \mbox{for all}\  R\ge s_{\min}.
$$
 Now Lemma~\ref{lem-spec1} implies that for all $\eps>0$,
$$
\sig_f(B_r(\om)) \le \wt C \cdot \max\bigl\{r^{2 - 2(\chi^+_{\ba,\xi,\vec{z}} + \eps)/\lam_\ba},r^2\bigr\},\ \mbox{for all}\ 0 < r \le (2s_{\min})^{-1},
$$
whence 
\be \label{lobound}
\und{d}(\sig_f,\om) \ge \min\Bigl\{2 - \frac{2\chi^+_{\ba,\xi,\vec{z}}}{\lam_\ba},2\Bigr\},
\ee
as desired.
\qed
\end{sloppypar}

\subsection{Proof of the upper bound in Theorem~\ref{th-main10}}
We need to prove the reverse inequality to (\ref{lobound}), assuming $\chi^+_{\ba,\xi,\vec{z}}>0$, which is equivalent to showing that, for every $\wt\eps>0$,
\be \label{nuinu}
\limsup_{r\to 0} \frac{\sig_f(B_r(\om))}{r^{2-\gam+\wt\eps}} \ge 1,\ \mbox{where}\ \gam = \frac{2\chi^+_{\ba,\xi,\vec{z}}}{\lam_\ba}\,.
\ee
Suppose that (\ref{nuinu}) is false for some $\wt\eps>0$. Then $\sig_f(B_r(\om))\le r^{2-\gam+\wt\eps}$ for all $r>0$ sufficiently small.
Thus we are in a position to apply Lemma~\ref{lem-specest}, which yields
$$
G_R(f,\om) = R^{-1} \int_{\Xx_{\ba}^{\vec{s}}} |S_R^{(x,t)}(f,\om)|^2\,d\wt{\mu}(x,t) \le C'\cdot R^{1-(2-\gam+\wt\eps)},
$$
or equivalently,
\be \label{byaka1}
 \int_{\Xx_{\ba}^{\vec{s}}} |S_R^{(x,t)}(f,\om)|^2\,d\wt{\mu}(x,t) \le C'\cdot R^{\gam-\wt\eps},
\ee
for all $R>0$ sufficiently large.
In order to get a contradiction, it is enough to get an appropriate lower bound for $ |S_R^{(x,t)}(f,\om)|$ which holds on a set of positive measure, and this measure is estimated from below. This will be our goal.

\medskip

By the definition of the pointwise upper  Lyapunov exponent of the cocycle at $\xi$, corresponding to a  vector $\vec{z}$,
for any $\eps>0$  there exists an infinite set $\Nk\subset \N$ such that
$$
{\|\Cc_{\ba}(\xi,n)\vec{z}\|}_1 \ge e^{(\chi_{\ba,\xi,\vec{z}}^+ - \eps)n},\ \ \forall\,n\in \Nk.
$$
In view of (\ref{new1}), replacing $\Nk$ by a subsequence if necessary, we obtain that
\be \label{byaka2}
\exists\,b\in \Ak:\ \ \left|\Phi_\fu^{\vec{s}} (\zeta^{n}(b), \om)\right| \ge e^{(\chi_{\ba,\xi,\vec{z}}^+- \eps)n},\ \ \forall\,n\in \Nk.
\ee

\begin{lemma} \label{lem-measest}
Suppose that the sequence of substitutions $\ba$ satisfies (A1) and (A2). Then for any $\eps>0$ there exists $n_0 = n_0(\ba,\eps)$ such that for all $b\in \Ak$,
\be \label{byaka3}
\mu\bigl(\zeta^{[n]}[b]\bigr) \ge e^{-(\lam_\ba+\eps)n},\ \ \forall\ n\ge n_0.
\ee
\end{lemma}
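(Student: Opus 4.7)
The plan is to combine the iterative identity $\vec{\mu}_n = \Sf_{n+1}\cdots \Sf_m\,\vec{\mu}_m$ (obtained by unfolding \eqref{eq-measure2}) with a global lower bound on $\|\vec{\mu}_m\|_1$, coupling through a strictly positive block $\Sf_\bq$ placed just past position $n$. The quantitative control on the next occurrence of $\bq$ is furnished by the stronger version (A1$'$) of (A1), which is the form actually in force in the applications; with (A1) alone there is no control on when $\bq$ next appears.

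The first ingredient is the crude bound $\|\vec{\mu}_m\|_1 \ge \|\Sf^{[m]}\|_1^{-1}$, valid for $m \ge n_0(\ba)$. It comes from the Kakutani--Rokhlin partition \eqref{KR}, which is genuine once recognizability applies: the normalization $\sum_b |\zeta^{[m]}(b)|\,\mu(\zeta^{[m]}[b]) = 1$ forces $\max_b|\zeta^{[m]}(b)|\cdot\|\vec{\mu}_m\|_1 \ge 1$. By (A2) this gives $\|\vec{\mu}_m\|_1 \ge C^{-1} e^{-(\lam_\ba + \eps_1)m}$ for any $\eps_1 > 0$ and all sufficiently large $m$.

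The second ingredient places $\bq$. Given $\eps > 0$, fix $\eps_1, \eps_2 > 0$ with $(\lam_\ba + \eps_1)/(1-\eps_2) < \lam_\ba + \eps$, set $N := \lceil n/(1-\eps_2)\rceil + |\bq|$, and invoke (A1$'$) with parameter $\eps_2$ to locate $\zeta_{k+1}\cdots\zeta_{k+|\bq|} = \bq$ with $k \ge n$ and $k + |\bq| \le N$. Writing
\[
\vec{\mu}_n \;=\; \underbrace{\Sf_{n+1}\cdots\Sf_k}_{=:A}\;\Sf_\bq\;\vec{\mu}_{k+|\bq|},
\]
the key positivity observation is that since every $\zeta_j \in \FrA$ (Notation~\ref{not-1}) has each row of $\Sf_{\zeta_j}$ carrying at least one positive integer entry, and this property propagates under matrix multiplication, every row of $A$ has row-sum $\ge 1$; combined with $\Sf_\bq(a,c)\ge 1$ for all $a,c$, this gives $(A\Sf_\bq)(b,c) \ge 1$ for all $b,c$, so $(\vec{\mu}_n)_b \ge \|\vec{\mu}_{k+|\bq|}\|_1$ for every $b \in \Ak$.

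Assembling the two ingredients yields $(\vec{\mu}_n)_b \ge C^{-1} e^{-(\lam_\ba + \eps_1)(k + |\bq|)} \ge C^{-1} e^{-(\lam_\ba + \eps)n}$, since $k + |\bq| \le n/(1-\eps_2) + O(1) \le (1+2\eps_2)n$ for large $n$, and the multiplicative constant is absorbed by enlarging $n_0(\ba,\eps)$. The only delicate point is the parameter bookkeeping in step two—locating $\bq$ strictly after position $n$ while keeping $k + |\bq|$ within a $(1 + O(\eps))$-factor of $n$—which is precisely what (A1$'$) provides; the row-positivity propagation through $A$ is the only other structural input and uses nothing beyond $\ba \in \FrA^\N$.
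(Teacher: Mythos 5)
Your proof is correct, and it follows the same two-ingredient strategy as the paper's: the lower bound $\|\vec{\mu}_m\|_1 \ge \|\Sf^{[m]}\|_1^{-1} \ge e^{-(\lam_\ba+\eps_1)m}$ coming from $\vec{\mu}_0=\Sf^{[m]}\vec{\mu}_m$ and (A2), combined with the strict positivity of a block $\Sf_\bq$ occurring downstream of position $n$ to spread that mass over all components of $\vec{\mu}_n$. Where you differ is in how the second ingredient is executed, and your version is the more careful one. The paper asserts that (A1) gives $\vec{\mu}_n\in\Sf_\bq\R^m_+$ for large $n$, hence uniformly bounded ratios of the components of $\vec{\mu}_n$, and then applies the $\ell^1$ bound at level $n$ itself. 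As written this is loose: if the next occurrence of $\bq$ after $n$ ends at level $K(n)$, one only gets $\vec{\mu}_n\in \Sf_{n+1}\cdots\Sf_{K(n)-|\bq|}\,\Sf_\bq\,\R^m_+$, and the component ratios of vectors in that cone are controlled only up to the (a priori unbounded) spread of the row sums of the intermediate product. Your route --- bounding each component of $\vec{\mu}_n$ below by $\|\vec{\mu}_{K(n)}\|_1$ via the entrywise bound $(A\Sf_\bq)(b,c)\ge 1$, and then applying the $\ell^1$ bound at level $K(n)$ rather than at $n$ --- is exactly what makes the argument close, at the price of needing $K(n)\le (1+O(\eps))\,n$, i.e.\ (A1$'$) rather than (A1). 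You are right to flag this: with (A1) alone there is no control on $K(n)/n$, and this argument does not yield \eqref{byaka3}; since (A1$'$) is a standing hypothesis of Theorem~\ref{th-main10}, where the lemma is applied, nothing downstream is affected. Your parameter bookkeeping with $\eps_1,\eps_2$ and $N=\lceil n/(1-\eps_2)\rceil+|\bq|$ is sound, and the row-positivity propagation through $A$ indeed uses only Notation~\ref{not-1}. One small remark: your first ingredient invokes recognizability (through the Kakutani--Rokhlin identity $\sum_b|\zeta^{[m]}(b)|\,\mu(\zeta^{[m]}[b])=1$); this is not listed among the lemma's hypotheses but is a standing assumption of the section, and the paper's own proof uses it equally through \eqref{eq-measure}.
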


\begin{proof}
Recall that $\vec{\mu}_0 = \Sf^{[n]}\vec{\mu}_n$, see (\ref{eq-measure}). Thus,
$$
1 = {\|\vec{\mu}_0\|}_1 = {\|\Sf^{[n]}\vec{\mu}_n\|}_1 \le \|\Sf^{[n]}\|\cdot {\|\vec{\mu}_n\|}_1,
$$
and the assumption (A2), or (\ref{ass2}), yields, for $n$ sufficiently large:
\be \label{inuha}
{\|\vec{\mu}_n\|}_1 \ge \|\Sf^{[n]}\|^{-1} \ge e^{-(\lam_\ba+\eps)n}.
\ee
Recall also that $\vec{\mu}_n = \Sf_{n+1}\vec{\mu}_{n+1}$ for $n\ge n_0$, see (\ref{eq-measure2}). Now it follows from the assumption (A1) that 
$$
\vec{\mu}_n \in \Sf_{\bq}\R^m_+,
$$
for $n$ sufficiently large, where $\Sf_\bq$ is a fixed matrix having strictly positive entries and $\R^m_+$ is the cone of non-negative vectors. Therefore, the ratios of components of $\vec{\mu}_n$ are uniformly bounded, and hence (\ref{inuha}) yields the desired (\ref{byaka3}).
\end{proof}

Fix $b\in \Ak$ from (\ref{byaka2}). We will take 
$$
R = R_n={|\zeta^{[n]}(b)|}_{\vec{s}},\ \ n\in \Nk.
$$
This will be our sequence $R_n\to\infty$ for which we will obtain a lower bound for $G_{R_n}(f,\om)$. 
For $n\in \Nk$  and $\tau\in [0,R_n)$ consider the set
\be \label{byaka35}
E_{n,\tau}:= \bigcup_{t\in [0,\tau]}h_t \Bigl(\bigl\{(x,0):\ x\in \zeta^{[n]}[b] \bigr\}\Bigr)\subset \Xx^{\vec{s}}_\ba
\ee
(recall that $\{h_t\}_{t\in \R}$ is our suspension flow).
From  recognizability of $\ba$ it follows that the union in (\ref{byaka35}) is disjoint, and we obtain by the definition of the measure $\wt\mu$ and Lemma~\ref{lem-measest}:
\be \label{byaka4}
\wt\mu(E_{n,\tau}) = \tau \cdot \mu\bigl(\zeta^{[n]}[b]\bigr) \ge \tau\cdot e^{-(\lam_\ba+\eps)n}.
\ee

Suppose that $(x,t)\in E_{n,\tau}$.
Recall that (\ref{er1}) and (\ref{er2}) imply 
$$
\Bigl| S_{R_n}^{(x,t)}(f,\om) - S_{\wt R}^{(x,0)}(f,\om)\Bigr| \le 3{\|f\|}_\infty \cdot s_{\max},
$$
where  $\wt R = \left|x[0,N-1]\right|_{\vec{s}}$ and $N\in \N$ is maximal for which $\wt R\le R_n$. By (\ref{SR1}),
$$
S_{\wt R}^{(x,0)}(f,\om) = {\Phi_\fu^{\vec{s}}(x[0,N-1],\om)}.
$$
By construction, there exist words $u,v,w$ such that
$$
uv = \zeta^{[n]}(b),\ \ vw = x[0,N-1],\ \ \mbox{where}\ \ \max\{|u|_{\vec{s}},|w|_{\vec{s}}\} \le \tau,
$$
hence
$$
\max\{|u|,|v|\} \le \tau/s_{\min}.
$$
By (\ref{eq-Phi}),
\begin{eqnarray}
\bigl|S_{R_n}^{(x,t)}(f,\om)\bigr| & \ge & \bigl|\Phi_\fu^{\vec{s}}(x[0,N-1],\om)\bigr|  - 3{\|f\|}_\infty \cdot s_{\max} \nonumber \\[1.2ex]
& \ge & \bigl|\Phi_\fu^{\vec{s}}(\zeta^{[n]}(b),\om)\bigr| - \bigl|\Phi_\fu^{\vec{s}}(u,\om)\bigr| - \bigl|\Phi_\fu^{\vec{s}}(w,\om)\bigr|\label{dudu} -3{\|f\|}_\infty \cdot s_{\max}.
\end{eqnarray}
By  Lemma~\ref{lem-newup}, for $n\in \Nk$ sufficiently large:
\be \label{dudu2}
 \bigl|\Phi_\fu^{\vec{s}}(u,\om)\bigr| \le C_1 \cdot |u|^{(\chi^+_{\ba,\xi,\vec{z}}+\eps)/\lam_\ba}\le  \wt C_1 \cdot \tau^{(\chi^+_{\ba,\xi,\vec{z}}+\eps)/\lam_\ba},
\ee
and similarly for $\bigl|\Phi_\fu^{\vec{s}}(w,\om)\bigr|$. Recall that the constants depend on $\ba,\eps,\vec{s},\xi,\varphi$, but not on $N$, $n$, or $R$. By the hypothesis of positivity of the Lyapunov exponent, assume that  $\chi^+_{\ba,\xi,\vec{z}}>\eps>0$ without loss
of generality. Then by (\ref{dudu}), (\ref{dudu2}), and
 (\ref{byaka2}), for $n\in \Nk$ sufficiently large:
$$
\bigl|S_{R_n}^{(x,t)}(f,\om)\bigr|
\ge e^{(\chi^+_{\ba,\xi,\vec{z}}-\eps)n} - 2\wt C_1 \cdot \tau^{(\chi^+_{\ba,\xi,\vec{z}}+\eps)/\lam_\ba} -3{\|f\|}_\infty \cdot s_{\max}.
$$
We obtain for $n\in \Nk$ sufficiently large:
\be \label{byka}
\bigl|S_{R_n}^{(x,t)}(f,\om)\bigr| \ge (1/2) \cdot e^{(\chi^+_{\ba,\xi,\vec{z}} - \eps)n},
\ee
assuming
$$
\tau = c_2\cdot \exp\left(n\lam_\ba\cdot \frac{\chi^+_{\ba,\xi,\vec{z}} - \eps}{\chi^+_{\ba,\xi,\vec{z}} + \eps}\right),
$$
for an appropriate small constant $c_2>0$.
It follows from (\ref{byka}) and (\ref{byaka4}) that
\begin{eqnarray*}
\Jk:=\int_{\Xx_{\ba}^{\vec{s}}} |S_{R_n}^{(x,t)}(f,\om)|^2\,d\wt{\mu}(x,t) & \ge  & (1/4)\cdot e^{2(\chi^+_{\ba,\xi,\vec{z}}-\eps)n} \cdot \wt\mu(E_{n,\tau}) \\
& \ge & (1/4)\cdot e^{2(\chi^+_{\ba,\xi,\vec{z}}-\eps)n} \cdot \tau\cdot e^{-(\lam_\ba+\eps)n} \\[1.2ex]
& =  & \frac{c_2}{4}\cdot \exp\left[n\Bigl(2(\chi^+_{\ba,\xi,\vec{z}}-\eps) + \lam_\ba \cdot \frac{\chi^+_{\ba,\xi,\vec{z}} - \eps}{\chi^+_{\ba,\xi,\vec{z}}+\eps} - (\lam_\ba+\eps)\Bigr)\right]\\[1.2ex]
& = & \frac{c_2}{4}\cdot \exp\left[n(2\chi^+_{\ba,\xi,\vec{z}}- \eps')\right],
\end{eqnarray*}
for some $\eps'>0$, which tends to zero whenever $\eps$ tends to zero.
Recall that $$R_n = {|\zeta^{[n]}(b)|}_{\vec{s}} \le s_{\max}\cdot \|\Sf^{[n]}\|_1 \le e^{(\lam_\ba+\eps')n},$$
for large $n$, hence
$$
\Jk \ge \frac{c_2}{4}\cdot {R_n}^{(\gam-\eps')/(\lam_\ba+\eps')},\ \ \mbox{where}\ \gam = \frac{2\chi^+_{\ba,\xi,\vec{z}}}{\lam_\ba}\,,
$$
for $n$ sufficiently large, contradicting (\ref{byaka1}), if $\eps'>0$ is sufficiently small. The proof of the theorem is complete. 
\qed

\subsection{Proof of Theorem~\ref{th-main1110}} Let $x\in X_\ba$, and recall that by the recognizability assumption, we have the unique representation (\ref{recog}) for $n=\ell$, that is, $x = T^i \zeta^{[\ell]}(x')$ for some $x'\in X_{\sig^\ell\ba}$ and $0\le i < |\zeta^{[\ell]}(x_0')|$. For a Lip-cylindrical function of level $\ell$,
$$
f(x,t)=\sum_{j\in \Ak} \One_{\zeta^{[j]}[a]}(x) \cdot \psi^{(\ell)}_j(t),\ \ \mbox{with}\ \ \psi^{(\ell)}_j\in \Lip[0,s_j^{(\ell)}],
$$
holds
$$
f\circ h_\tau\bigl(\zeta^{[\ell]}(x'),0\bigr) = \psi_{x_n'}^{(\ell)} \bigl(\tau - {\bigl|x'[0,n-1]\bigr|}_{\vec{s}^{(\ell)}}\bigr),\ \ \mbox{for}\ {\bigl|x'[0,n-1]\bigr|}_{\vec{s}^{(\ell)}}\le \tau < {\bigl|x'[0,n]\bigr|}_{\vec{s}^{(\ell)}}.
$$
This implies
$$
S_{\wt R}^{(\zeta^{[\ell]}(x'),0)}(f,\om) = {\Phi_\fu^{\vec{s}^{(\ell)}}(x[0,N-1],\om)}\ \ \ \mbox{for}\ \ \wt R = \left|x'[0,N-1]\right|_{\vec{s}^{(\ell)}},\ \ \ \mbox{where}\ \ \fu(a) =  \widehat{\psi}^{(\ell)}_a(\om),
$$
which is the analogue of (\ref{SR1}). After that, the proof proceeds exactly as in Theorem~\ref{th-main10}. \qed

\begin{proof}[Proof of Theorems~\ref{th-main1} and \ref{th-main11}] 
As written in the statement of Theorem~\ref{th-main1} and explained in the Introduction, for $\nu$-almost every Abelian differential $(M,\omb)$, there is a pair $(\ba,\vec s)$, with $\ba \in \Om_+$ and $\vec s\in \R^m_+$, such that the vertical translation flow on $(M,\omb)$ is measurably isomorphic to the suspension flow $(\Xx_\ba^{\vec{s}}, {\widetilde \mu}, h_t)$. This follows by a combination of \cite[Section 4.3]{Buf-umn} and Theorem~\ref{thm-recog}. To this end, we need to verify recognizability of the sequence of substitutions $\ba$ almost surely, with respect to the measure $\P_+$, the projection of the push-forward of $\nu$ by the symbolic coding to the positive coordinates. Theorem~\ref{th-recog0} provides sufficient conditions for recognizability, which hold in our case (almost surely). Indeed, the substitution matrices $\Sf_{\zeta_j}$, arising from the Rauzy-Veech induction, are unimodular \cite{veech}, hence have maximal rank. Aperiodicity of $X_\ba$ (almost surely) follows from the fact that admissible words in $\Om_+$ arise from walks in the Rauzy graph, which is aperiodic, so that the number of admissible words of length $n$ tends to infinity with $n$. Every admissible word appears in $\ba\in \Om_+$ with probability one.

Thus, we are position to apply Theorem~\ref{th-main10}, provided the conditions (A1$'$), (A2), and (A3) hold for $\P_+$-a.e.\ $\ba\in \Om_+$, and this is a consequence of Lemma~\ref{lem-vspoma1}. Note that (C1) and (C3) in the latter lemma follow by the fundamental results of Veech \cite{veech}, and condition (C2) holds by a theorem of Zorich \cite{Zorich1}. This concludes the derivation of Theorem \ref{th-main1}.

Theorem \ref{th-main11} follows from Theorem \ref{th-main1110}  in exactly the same way.
\end{proof}


\section{Proof of other results} \label{sec-add}

\begin{proof}[Proof of Corollary~\ref{cor-spec00}]
(i) The claim  follows from (\ref{ki2}) and  the following elementary fact:

\smallskip

{\em For any sequence of complex $m\times m$ matrices $\{A_n\}_{n\ge 1}$, there exists a coordinate basis vector $\vec{e}_j$ such that}
$$
\limsup_{n\to \infty} (n^{-1}\log\|A_n \vec{e}_j\|) = \limsup_{n\to \infty} (n^{-1}\log\|A_n\|). 
$$

(ii) Since $\Cc_\ba$ is continuous on $\T^m$, the pointwise upper Lyapunov exponents are measurable functions, and hence the sets
$$
\bigl\{(\om,\vec{b})\in \R\times \C^m:\ \chi^+_{\ba,\om\vec{s}, \Gam_\om \vec{b}}< \chi^+_{\ba,\om\vec{s}}\bigr\}
$$
are measurable for all $\vec{s}\in \R^m_+$. Part (i) implies that for a.e.\ $\om$, the set $\bigl\{\vec{b}\in \C^m:\ \chi^+_{\ba,\om\vec{s}, \Gam_\om \vec{b}}< \chi^+_{\ba,\om\vec{s}}\bigr\}$ has Lebesgue measure zero, and so the desired claim follows by an application of Fubini's Theorem and Theorem~\ref{th-main10}.
\end{proof}

We will need the following

\begin{lemma} \label{lem-approx} Let $\ba$ be a  primitive sequence of substitutions on $\Ak$ and $\vec{s}\in \R^m_+$. Then the collection of Lip-cylindrical functions of level $\ell$, for all $\ell\in \N$, is dense in $L^2(\Xx_\ba^{\vec{s}},\wt \mu)$.
\end{lemma}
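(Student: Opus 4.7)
The plan is a two-step $L^2$ approximation: first I reduce to approximating indicators of level-$\ell$ rectangles in $\Xx_\ba^{\vec{s}}$, and then I smooth the fibre factor to obtain a genuine Lip-cylindrical function of level $\ell$.

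For the first step, the text preceding the lemma recalls that for $\ell \ge n_0(\ba)$ the Kakutani--Rokhlin partitions $\Pk_\ell$ from \eqref{KR} generate the Borel $\sigma$-algebra on $X_\ba$ modulo $\mu$-null sets. Combining this with the disjoint-in-measure decomposition
$$
\Xx_\ba^{\vec{s}} = \bigsqcup_{a\in \Ak} \zeta^{[\ell]}[a] \times [0, s_a^{(\ell)}],
$$
the flow-equivariant identification of each $T^i\zeta^{[\ell]}[a] \times [c,d]$ with a sub-rectangle of $\zeta^{[\ell]}[a]\times [0, s_a^{(\ell)}]$ (via time-translation by $h_t$), and the fact that sub-intervals generate the Borel $\sigma$-algebra on $[0, s_a^{(\ell)}]$, the proof will conclude that finite linear combinations of rectangle indicators
$$
\One_{\zeta^{[\ell]}[a] \times [c,d]}\qquad (\ell \in \N,\ a\in \Ak,\ 0\le c < d \le s_a^{(\ell)}),
$$
are norm-dense in $L^2(\Xx_\ba^{\vec{s}}, \wt{\mu})$.

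For the second step, fixing such an indicator and $\eps > 0$, I will invoke density of Lipschitz functions in $L^2[0, s_a^{(\ell)}]$ (for instance via convolution with a smooth bump) to choose $\psi^{(\ell)}_a\in \Lip[0, s_a^{(\ell)}]$ with $\|\psi^{(\ell)}_a - \One_{[c,d]}\|_{L^2[0, s_a^{(\ell)}]}^2 < \eps^2/\mu(\zeta^{[\ell]}[a])$, and set $\psi^{(\ell)}_b \equiv 0$ for $b\ne a$. The resulting Lip-cylindrical function of level $\ell$,
$$
f(x,t) = \sum_{b\in \Ak}\One_{\zeta^{[\ell]}[b]}(x)\cdot \psi^{(\ell)}_b(t),
$$
satisfies $\|f - \One_{\zeta^{[\ell]}[a]\times [c,d]}\|_{L^2(\Xx_\ba^{\vec{s}},\wt\mu)} < \eps$ by Fubini's theorem. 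Together with linearity, this will show that every element of $L^2(\Xx_\ba^{\vec{s}},\wt\mu)$ is an $L^2$-limit of Lip-cylindrical functions of finite (but increasing) level.

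I do not anticipate any serious obstacle, since the first step relies on a generation statement that is already established in the paper, and the second step is a routine one-dimensional mollification. The only bookkeeping point requiring mild care is the identification between the ``small'' KR-rectangles $T^i\zeta^{[\ell]}[a]\times [c,d]$ and sub-rectangles of the level-$\ell$ tower $\zeta^{[\ell]}[a]\times [0, s_a^{(\ell)}]$; this is immediate from the definition of the suspension flow, as $T^i$-orbits are realised by time-shifts under $h_t$.
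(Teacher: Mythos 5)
Your proposal is correct and follows essentially the same route as the paper: both rest on the fact that the Kakutani--Rokhlin partitions \eqref{KR} generate the Borel $\sigma$-algebra (which, as in the paper's own proof, really uses recognizability of $\ba$ rather than primitivity alone), identify the floors $T^i(\zeta^{[\ell]}[a])$ with time-slices of the level-$\ell$ tower $\zeta^{[\ell]}[a]\times[0,s_a^{(\ell)}]$, and finish with a one-dimensional Lipschitz approximation in the fibre. The only cosmetic difference is that you reduce to indicators of rectangles, whereas the paper keeps general $L^2$ fibre functions on each floor and concatenates them along the tower.
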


\begin{proof} By the recognizability of $\ba$, the sequence 
$$
\{T^i(\zeta^{[\ell]}[a]):\ a\in \Ak,\ 0 \le i < |\zeta^{[\ell]}(a)|\},\ \ \ell\ge \ell_0,
$$
is a sequence of Kakutani-Rokhlin partitions that generates the Borel $\sig$-algebra on the substitution space, see \cite{BSTY}. It follows that the collection of functions of the form
\be \label{eq-approx}
f(x,t) = \sum_{a\in \Ak} \sum_{\ 0 \le i < |\zeta^{[\ell]}(a)|} \One_{T^i(\zeta^{[\ell]}[a])}(x)\cdot\psi^{(\ell)}_{i,a}(t),\ \ \mbox{where}\ \psi^{(\ell)}_{i,a}\in L^2\bigl[0,s_{\zeta^{[\ell]}(a)_i}\bigr],\ \ \ell\ge \ell_0,
\ee
is dense in $L^2(\Xxi^{\vec{s}},\wt \mu)$. Here $\zeta^{[\ell]}(a)_i$ is $i$-th letter of $\zeta^{[\ell]}(a)$.
The function $f$ in (\ref{eq-approx}) can be expressed as
$$
f(x,t) = \sum_{a\in \Ak} \One_{\zeta^{[\ell]}}(x) \cdot \psi_a^{(\ell)}(t), 
$$
where for all $a\in \Ak$,
$$
\psi^{(\ell)}_a(t) = \psi^{(\ell)}_{i,a}\bigl(t - \big|\zeta^{[\ell]}(a)[1,i-1]\bigr|_{\vec{s}}\bigr)
$$
whenever
$$
\big|\zeta^{[\ell]}(a)[1,i-1]\bigr|_{\vec{s}}\le t < \big|\zeta^{[\ell]}(a)[1,i]\bigr|_{\vec{s}},\ \ \ 0\le i < |\zeta^{[\ell]}(a)|-1.
$$
Here we are using the notation
$$
v[1,i] = v_1\ldots v_i,\ \ \mbox{for}\ \ v\in \Ak^n,\ n\ge i.
$$
Clearly, $\psi_a^{(\ell)}\in L^2\bigl[0,|\zeta^{[\ell]}(a)|_{\vec{s}}\bigr]$, $a\in \Ak$, hence
$f$ is an $L^2$-cylindrical function of level $\ell$. It remains to approximate
$\psi_a^{(\ell)}$, $a\in \Ak$, by Lipschitz functions to obtain a desired approximating Lip-cylindrical function of level $\ell$.
\end{proof}

\begin{corollary} \label{cor-approx.spec}
Let $\ba$ be a primitive sequence of substitutions on $\Ak$ and $\vec{s}\in \R^m_+$. 
If for every Lip-cylindrical function $f$ of level $\ell$, for all $\ell$ sufficiently large,
the spectral measure $\sig_f$ is singular with respect to the Lebesgue measure, then the flow $(\Xx_\ba^{\vec{s}},\wt \mu, h_t)$ has purely singular spectrum.
\end{corollary}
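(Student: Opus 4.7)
The plan is to apply a standard density argument, based on closedness of the singular spectral subspace of $L^2(\Xx_\ba^{\vec{s}}, \wt\mu)$ and the approximation Lemma~\ref{lem-approx}. First, I would invoke the spectral theorem for the one-parameter unitary group $U_t : f \mapsto f \circ h_t$: the Lebesgue decomposition of the scalar spectral measures $\sig_f$ into absolutely continuous and singular parts (with respect to Lebesgue measure on $\R$) yields an $h_t$-invariant orthogonal splitting $L^2 = L^2_{{\rm ac}} \oplus L^2_{{\rm sing}}$, where both summands are closed. The flow has purely singular spectrum precisely when $L^2_{{\rm sing}} = L^2$.

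Next, I would verify that the family of Lip-cylindrical functions of level $\ge \ell_0$ (for any fixed $\ell_0$) is dense in $L^2$. By Lemma~\ref{lem-approx} the union over all $\ell\in\N$ is dense, and its proof establishes more: the Kakutani-Rokhlin partitions $\Pk_n$ from (\ref{KR}) form a refining sequence generating the Borel $\sig$-algebra, so any $f \in L^2$ and any $\eps > 0$ admit a Lip-cylindrical approximant of an arbitrarily high level $\ell \ge \ell_0$ lying within $\eps$ of $f$ in $L^2$-norm.

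By the hypothesis of the corollary, every such Lip-cylindrical function of level $\ge \ell_0$ lies in $L^2_{{\rm sing}}$. Hence this closed subspace contains a dense subset of $L^2$ and therefore coincides with $L^2$, which gives purely singular spectrum of the flow. The only point requiring a moment of care is the restriction to levels $\ge \ell_0$ in the density statement, which is immediate from the refinement property of the Kakutani-Rokhlin partitions; apart from this, the argument is routine spectral theory combined with Lemma~\ref{lem-approx}.
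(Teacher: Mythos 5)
Your proof is correct and follows essentially the same route as the paper: density of the Lip-cylindrical functions of level $\ge \ell_0$ (via Lemma~\ref{lem-approx} and the refining Kakutani--Rokhlin partitions) combined with a standard closure fact from spectral theory. The only cosmetic difference is in that last step: the paper closes the argument by citing total-variation continuity of $f\mapsto\sig_f$ along $L^2$-convergent sequences (which preserves singularity in the limit), whereas you invoke the closedness of the invariant subspace $L^2_{\rm sing}$ in the decomposition $L^2=L^2_{\rm ac}\oplus L^2_{\rm sing}$; these are interchangeable formulations of the same standard fact.
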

\begin{proof}
This is immediate from Lemma~\ref{lem-approx} and the following standard result (the proof for $\Z$-actions is in  \cite[Chapter 2]{Queff}; the proof for $\R$-actions is similar):
given a finite measure-preserving system, if $f_n \to f$ in $L^2$, then the spectral measures $\sig_{f_n}$ converge to $\sig_f$ in total variation.
\end{proof}

\begin{proof}[Proof of Corollary~\ref{cor-spec10}]
(i)
Suppose that $\chi^+_{\ba,\om\vec{s}} > \half\lam_\ba$ for $\om\in A\subset \R$, where $\Lk^1(A)>0$. By Corollary~\ref{cor-spec00}(ii), there exists a non-zero function $f\in L^2$ such that $\und{d}(\sig_f,\om) < 1$ for $\om\in A$. But then
$$
\limsup_{r\to 0} \frac{\sig_f(B_r(\om))}{2r} = +\infty,\ \ \om\in A,
$$
contradicting the well-known fact that $\lim_{r\to 0} \frac{\nu(B_r(\om))}{2r} < \infty$ Lebesgue-a.e.\ for any finite positive measure $\nu$, see e.g.\ \cite[Theorem 2.12(i)]{Mattila}.

To prove the second statement, note, see e.g.\ \cite[Prop.\ (10.12)]{Falc-book}, that $\sig_f$, being a measure on the line,  has upper Hausdorff dimension at most one, i.e. 
$$
\dim^*_H(\sig_f) = \inf \{s:\ \und{d}(\sig_f,\om) \le s\ \mbox{for} \ \sig_f\mbox{-a.e.}\ \om\} \le 1.
$$
It follows that $\und{d}(\sig_f,\om)\le 1$ for $\sig_f$-a.e.\ $\om$, and hence $\chi^+_{\ba,\om\vec{s}} \ge \half\lam_\ba$ for $\sig_f$-a.e.\ $\om$.

(ii)
Let $f\in L^2(\Xx_\ba^{\vec{s}})$ be Lip-cylindrical such that $\chi^+_{\ba,\om\vec{s}} < \half\lam_\ba$ for Lebesgue-a.e.\ $\om\in\R$. We want to show that  $\sig_f$ is singular.  Indeed, let $\sig_f = h\cdot \Lk^1 + (\sig_f)_{\rm sing}$, with $h\in L^1(\R)$, be the Lebesgue decomposition. 
Then by Theorem~\ref{th-main10}:
$$
\und{d}(\sig_f,\om) \ge 2 - \frac{2\max\{0,\chi^+_{\ba,\om\vec{s},\vec{z}}\}}{\lam_\ba} \ge 2 - \frac{2\max\{0,\chi^+_{\ba,\om\vec{s}}\}}{\lam_\ba} >1,
$$
for Lebesgue-a.e.\ $\om\in\R$.
But this implies
$$
\lim_{r\to 0} \frac{\sig_f(B_r(\om))}{2r} = 0
$$
for Lebesgue-a.e.\ $\om\in \R$, hence $h=0$ a.e.

(iii) Let $f$ be a Lip-cylindrical function of level $\ell$. By Theorem~\ref{th-main1110},
$$
\und{d}(\sig_{\!f},\om) \ge 2 - \frac{2\max\{0,\chi^+_{\ba,\om\vec{s}^{\,(\ell)}}\}}{\lam_\ba}\,.
$$
Under the assumption $\det\Cc_\ba(\om\vec{s},\ell)\not\equiv 0$, it follows that $\{\om\in \R:\, \det\Cc_\ba(\om\vec{s},\ell)=0\}$ is countable, since it is the set of zeros of a non-trivial trigonometric polynomial. Thus, we have for a.e.\ $\om$, by the definition of the cocycle,
$$
\Cc_\ba(\om\vec{s}^{(\ell)},n) = \Cc_\ba(\om\vec{s}, n+\ell)\cdot \Cc_\ba(\om\vec{s},\ell)^{-1},
$$
and hence
$\chi^+_{\ba,\om\vec{s}^{\,(\ell)}} =  \chi^+_{\ba,\om\vec{s}} < \lam_\ba/2$.
Then we deduce as in part (ii) that $\sig_f$ is purely singular, and Corollary~\ref{cor-approx.spec} implies the desired claim.
\end{proof}


\section{Appendix: on the H\"older property for the spectrum of translation flows in genus two} \label{section-stratum}

Here we return to translation flows on flat manifolds, see Section 1.1, and consider the vertical flow $h_t^+$.
Katok \cite{katok} proved that the flow $h_t^+$ is never mixing. 
Under additional assumptions on the combinatorics of the abelian differentials, weak mixing for almost all  translation flows has been established by Veech in \cite{veechamj} and in full generality by Avila and Forni \cite{AF} (for genus $\ge 2$). Weak mixing means that spectral measures for test functions of mean zero are continuous, and it is a natural question (raised by Ya. Sinai in a personal communication) whether one can obtain quantitative estimates for the modulus of continuity for the spectral measures. The first result in this direction was obtained in \cite{BuSo2}; here we extend it to obtain the following.

\begin{theorem}\label{main-moduli}
There exists $\gamma>0$ such that  for almost every, with respect to the Masur-Veech measure, abelian differential $(M, \omb)$ on a surface of genus 2, the following holds. For any $B>1$ there exist constants $C=C(\omb,B)$ and $r_0=r_0(\omb,B)$ such that for any Lipschitz function $f$ on $M$, for all $x\in [B^{-1},B]$ and $r\in (0, r_0)$ we have 
$$
\sig_f([x-r, x+r])\le C\|f\|_L\cdot r^\gamma.
$$
\end{theorem}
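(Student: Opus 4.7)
\medskip

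\noindent\textbf{Proof plan.} The Masur--Veech measure on the moduli space of abelian differentials on a genus two surface is concentrated on the two strata $\Hk(2)$ and $\Hk(1,1)$. The case of $\Hk(2)$ is already settled in \cite{BuSo2}, so it suffices to prove the bound for $\nu$-a.e.\ $(M,\omb) \in \Hk(1,1)$. Fix the symbolic coding of Section~\ref{subsec-symbol}: for $\nu$-a.e.\ $(M,\omb)\in \Hk(1,1)$ we obtain a pair $(\ba,\vec s)$ with $\ba\in \Om_+$ and $\vec s\in \R^m_+$ (here $m=5$), so that the vertical flow is measurably isomorphic to the suspension $(\Xx_\ba^{\vec s},h_t,\wt\mu)$. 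Lip-cylindrical functions of all levels are dense in $L^2(\Xx_\ba^{\vec s},\wt\mu)$ by Lemma~\ref{lem-approx}, and $L^2$-convergence of test functions yields total-variation convergence of the corresponding spectral measures; therefore it suffices to establish the uniform H\"older bound for Lip-cylindrical functions, uniformly in $\om\in [B^{-1},B]$.

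The analysis of Section~\ref{section-proofs} (prefix--suffix decomposition and the identity \eqref{new1}) reduces everything to the following uniform cocycle estimate: there exist $\eps>0$ and $C=C(\ba,\vec s,B)<\infty$ such that, for $\P_+$-a.e.\ $\ba$ and Lebesgue-a.e.\ $\vec s\in \R^m_+$,
\begin{equation*}
\|\Cc_\ba(\om\vec s,n)\|_1 \le C\,e^{(\lam_\ba-\eps)n}\ \ \text{for every $\om\in [B^{-1},B]$ and every $n\ge 1$.}
\end{equation*}
Indeed, combined with Lemma~\ref{lem-newup} this yields $|S_R^{(x,t)}(f,\om)|\le C'\|f\|_L\cdot R^{1-\eps/\lam_\ba}$ uniformly in $\om\in[B^{-1},B]$, and Lemma~\ref{lem-spec1} then gives H\"older continuity with $\gamma = 2\eps/\lam_\ba$.

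To obtain this uniform estimate, we follow the strategy of \cite{BuSo2}: find a finite word $W = \zeta_{w_1}\cdots\zeta_{w_\ell}$ admissible in the symbolic coding of $\Hk(1,1)$ with the two properties
\begin{itemize}
\item[(i)] the substitution matrix $\Sf_W$ has strictly positive entries;
\item[(ii)] for every compact $K\subset \T^m\setminus\{0\}$ there is $\delta=\delta(K)>0$ with $\|\Cc_W(\xi)\|_1 \le (1-\delta)\|\Sf_W\|_1$ for all $\xi\in K$.
\end{itemize}
Condition~(ii) encodes cancellation at every non-trivial frequency in the trigonometric sums defining $\Cc_W$. Since $W$ appears in $\ba$ with positive asymptotic frequency (condition~(A1$'$) together with Birkhoff's theorem), and since the orbit $\{(\Sf^{[k]})^t(\om\vec s)\,({\rm mod}\,\Z^m):k\ge 0\}$ is equidistributed in a compact portion of $\T^m\setminus\{0\}$ uniformly in $\om\in[B^{-1},B]$ for a.e.\ $\vec s$ (this uses ergodicity of the renormalization on the stratum, plus the observation that $\om\vec s\not\in \Q\vec s$ for any rational direction once $\vec s$ avoids a null set), a positive-density set of iterates yields a gain factor of $(1-\delta)$; sub-multiplicativity of the cocycle norm, controlled on the remaining factors by (A2), then produces the required $\eps>0$, uniformly in $\om\in [B^{-1},B]$.

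The main obstacle is the combinatorial construction of $W$ for the Rauzy class of $\Hk(1,1)$. In \cite{BuSo2} an explicit word was exhibited in the Rauzy graph of $\Hk(2)$ (permutations on four letters) and property~(ii) was checked by direct Fourier computation on $\T^4$. For $\Hk(1,1)$ the Rauzy class consists of irreducible permutations on five letters and the combinatorics is considerably richer; the anticipated approach is to single out a short admissible word $W$ whose spectral matrix $\Cc_W(\xi)$ has systematic cancellation for all non-zero $\xi$, then verify the quantitative bound $\|\Cc_W(\xi)\|_1\le(1-\delta)\|\Sf_W\|_1$ by a finite (but nontrivial) trigonometric estimate on compact subsets of $\T^5\setminus\{0\}$. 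This verification, and in particular ruling out directions $\xi$ producing only partial cancellation in $\Cc_W$, is the technical crux of the argument.
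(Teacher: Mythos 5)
Your proposal does not establish the theorem: the step you label the ``technical crux'' (constructing an admissible word $W$ in the Rauzy class of $\Hk(1,1)$ and verifying $\|\Cc_W(\xi)\|_1\le(1-\delta)\|\Sf_W\|_1$) is exactly where all the work lies, and it is deferred rather than done. Worse, the surrounding reduction is not sound as stated. First, the claim that the orbit $\{(\Sf^{[k]})^t(\om\vec s)\ (\mathrm{mod}\ \Z^m)\}$ is ``equidistributed in a compact portion of $\T^m\setminus\{0\}$ uniformly in $\om\in[B^{-1},B]$'' does not follow from ergodicity of the renormalization: controlling how often this orbit returns near $0$ (and near the other resonant subtori where $|\Cc_W(\xi)|$ entries have no cancellation --- these form a finite union of subtori, not just $\{0\}$, so condition (ii) on all compacts of $\T^m\setminus\{0\}$ is generally false) is precisely the Diophantine/large-deviation content of \cite[Sections 6--9]{BuSo2}, and it uses the Lyapunov spectrum of the Rauzy--Veech cocycle in an essential way. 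Second, you never confront the one genuinely new difficulty of $\Hk(1,1)$: the Rauzy class is on five letters and, by Avila--Viana, the cocycle has spectrum $\theta_1>\theta_2>\theta_3=0>-\theta_2>-\theta_1$, so the hypothesis of \cite[Theorem 4.1]{BuSo2} (which requires $\theta_1>\theta_2>0>\theta_3$) fails. Your scheme, which treats $\vec s$ as a Lebesgue-generic vector in $\R^5_+$, cannot work around this.

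The paper's proof is much shorter and rests on a structural observation you are missing: the height vector $\vec s$ of the suspension is not arbitrary but lies in the Veech subspace $H(\pi)$, which is invariant under the cocycle and equals the sum of the stable and unstable Oseledets subspaces, hence avoids the central (zero-exponent) direction; moreover the Masur--Veech measure pushes forward to a measure absolutely continuous with respect to $\P_+$ times Lebesgue measure on $H(\pi)$. With this, one replaces hypothesis (a) of \cite[Theorem 4.1]{BuSo2} by $\theta_1>\theta_2>0=\theta_3>\cdots$ and restricts the conclusion to Lebesgue-a.e.\ $\vec s$ in the span of the stable and unstable subspaces; the proof of that theorem, including the key Diophantine step \cite[(9.1)]{BuSo2}, then carries over verbatim because the Oseledets expansion of $\vec s$ has no component in the central subspace. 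If you want to salvage your route, you would need to (a) actually produce $W$ and quantify the cancellation off the resonant subtori, and (b) replace the equidistribution claim by a uniform-in-$\om$ positive-density estimate for returns to the cancellation region, which is where the restriction $\vec s\in H(\pi)$ and the two positive exponents must enter.
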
 

\begin{proof}
As is well-known, the moduli space of abelian differentials on a surface of genus 2 is a disjoint union of two strata: $\Hk(2)$, corresponding to $\omb$ with one zero of order two, and $\Hk(1,1)$, corresponding to $\omb$ having two simple zeros. Theorem~\ref{main-moduli} was proved in \cite{BuSo2} for the case of $\Hk(2)$, and here we indicate how a  modification of the proof yields the result for the stratum $\Hk(1,1)$.

The translation flow on the surface can be realized as a suspension flow over
an interval exchange transformation (IET), see Section~\ref{subsec-IET}. Let $\Hk$ be a stratum of  the moduli space of abelian differentials on a surface of genus $g \ge 2$.
The symbolic representation of a.e.\ translation flow as a suspension over a Bratteli-Vershik system was obtained in \cite{Buf-umn}, see Section~\ref{subsec-symbol} for more details.

Now let us restrict ourselves to the stratum $\Hk(1,1)$, which corresponds to the Rauzy class of the IET with permutation $(5,4,3,2,1)$. By the Avila-Viana Theorem \cite{AV}, the Rauzy-Veech cocycle in this case is known to have a simple Lyapunov spectrum $\theta_1 > \theta_2 > \theta_3=0 > \theta_4 = -\theta_2 > \theta_5 = -\theta_1$. However, according to Veech \cite{veechamj}, see also
\cite{AF}, the vector of ``heights'' for the suspension flow over the IET necessarily belongs to the subspace $H(\pi)$, which is invariant under the cocycle and depends only on the permutation. In fact, the space $H(\pi)$ is the sum of the stable and unstable subspaces for the cocycle (orthogonal to the central space with respect to the natural inner product). Thus the restriction of the cocycle to the sequence of invariant subspaces $H(\pi)$ has only non-zero Lyapunov exponents $\theta_1 > \theta_2 > -\theta_2 > -\theta_1$. 

It is important to note that the Masur-Veech measure on the stratum $H(1,1)$ is taken under this correspondence to a measure absolutely continuous with respect to the product of the measure $\P_+$ on $\Om_+$ (see Section~\ref{subsec-symbol} for the definition) and the Lebesgue measure on the set of possible height vectors in the subspace $H(\pi)$ defining the suspension. Thus the desired statement follows from a modified \cite[Theorem 4.1]{BuSo2} on suspension flows over random BV-transformations.

Everything in the statement of  \cite[Theorem 4.1]{BuSo2} remains unchanged, except that 
the assumption (a) on the Lyapunov spectrum: $\theta_1 > \theta_2 > 0 > \theta_3 > \ldots$ is replaced by

$(\mbox{a}')$ the Lyapunov spectrum satisfies
$$
\theta_1 > \theta_2 > 0 = \theta_3 > \ldots,
$$
and in the conclusion ``Lebesgue-a.e.\ vector $\vec{s}$,'' which determines the roof function, is replaced by ``Lebesgue-a.e.\ vector $\vec{s}$ in the subspace spanned by the stable and unstable Oseledets subspaces'' (thus excluding the zero exponent). 
The proof carries over verbatim. The only additional observation needed is that the equation \cite[(9.1)]{BuSo2} still holds, since the the decomposition of the vector $\vec{s}$ with respect to the Oseledets basis \cite{oseled} does not contain the term from the central subspace. We thus continue with the proof using the coordinates with respect to the two positive Lyapunov exponents to exclude the ``bad set,'' without any further changes.
\end{proof}


\noindent {\bf Acknowledgements.}
We are deeply grateful to the anonymous referee whose comments helped us to improve the presentation.
The research of A. Bufetov has received funding from the European Research Council (ERC) under the European Union's Horizon 2020 
research and innovation programme under grant agreement No 647133 (ICHAOS).  A. Bufetov has also been funded by RFBR grant 18-31-20031.
The research of B. Solomyak was  supported by the Israel Science Foundation (grants 396/15 and 911/19).
We would like to thank the Institut Mittag-Leffler  of the Royal Swedish Academy of Sciences, where this work started, for its warm hospitality.  Part of this work was done while A.B. was visiting Bar-Ilan University and while B.S. was visiting CIRM Luminy in the framework of the ``research in pairs'' programme. We are  deeply grateful to these institutions for their hospitality.

\end{document}